
\documentclass{amsart}

\usepackage{amscd,amssymb,dsfont} 
\usepackage{url}
\usepackage{rotating}
\usepackage[utf8]{inputenc}
\usepackage{amsmath}

\usepackage[colorlinks=true]{hyperref}
\usepackage{mathrsfs}
\usepackage{bbm}
\usepackage{framed}


\usepackage{xcolor}

\newcommand{\red}[1]{\textcolor{black}{#1}} 

\newcommand{\dN}{d_{\tilde N}}

\usepackage{stmaryrd}
\usepackage{url}
\usepackage{pinlabel}

\usepackage{subfigure}

\usepackage[export]{adjustbox}
\usepackage{tikz-cd}


\makeatletter
\def\UTFviii@defined#1{%
	\ifx#1\relax
	!!FIXME!!%
	\else
	\expandafte‌​r#1%
	\fi
}
\makeatother

\DeclareFontFamily{OT1}{pzc}{}
\DeclareFontShape{OT1}{pzc}{m}{it}{<-> s * [1.10] pzcmi7t}{}
\DeclareMathAlphabet{\mathpzc}{OT1}{pzc}{m}{it}

\usepackage{syntonly}



\newcommand{\deltxt}[1]{}

\newtheorem{theorem}{Theorem}[section]
\newtheorem{corollary}[theorem]{Corollary}
\newtheorem{lemma}[theorem]{Lemma}
\newtheorem{proposition}[theorem]{Proposition}

\theoremstyle{definition}
\newtheorem{definition}{Definition}[section]
\newtheorem{remark}{Remark}[section]

\theoremstyle{remark}




\newcommand{\eval}[2][\right]{\relax
	\ifx#1\right\relax \left.\fi#2#1\rvert}



\newcommand\bR{{\mathbb{R}}}

\newcommand\bZ{{\mathbb Z}}

\newcommand\bV{{\mathbb V}}

\newcommand\Hom{{\rm Hom}}
\newcommand\dev{{\bf dev}}

\newcommand\clo{{\rm Cl}}

\newcommand\ra{\rightarrow}

\newcommand\emp{\emptyset}
\newcommand\eps{\epsilon}

\newcommand\Aff{{\mathbf{Aff}}}

\newcommand\Aut{{\mathbf{Aut}}}
\newcommand\Idd{{\rm I}}

\newcommand\bv{{\mathbf{v}}}




\newcommand\SL{{\mathsf{SL}}}

\newcommand\SO{{\mathsf{SO}}}

\newcommand\GL{{\mathsf{GL}}}

\newcommand{\vv}{\mathbf{v}}
\newcommand{\vw}{\mathbf{w}}

\newcommand{\Uu}{\mathbf{U}} 
\newcommand{\Uc}{\mathbf{UC}}

\newcommand{\llrrV}[1]{
	\left|\mkern-2mu\left|#1\right|\mkern-2mu\right|}

\setcounter{tocdepth}{3} 

\begin{document}
	
	\title[Anosov-affine structures]{
		Complete affine manifolds 
		with Anosov holonomy groups} 

	\author[S. Choi]{Suhyoung Choi}
	\thanks{This research was supported by the Basic Science Research Program through the National Research Foundation
		of Korea(NRF) funded by the Ministry of Education(2019R1A2C108454412), 
		Author orcid number: 0000-0002-9201-8210}
	\address{Department of Mathematical Sciences \\ KAIST \\Daejeon 34141, South Korea}
	
	\email{schoi@math.kaist.ac.kr}
	
	%
	
	
	\date{\today}
	
	%
	%
	%
	%
	%
	%
	%
	%
	%
	
	
	
	
	
	%
	\subjclass[2010]{Primary 57M50; Secondary 22E40}
	\keywords{affine manifold, Anosov representation, word-hyperbolic group, Gromov hyperbolic space, coarse geometry }
	
	\begin{abstract}

Let $N$ be a complete affine manifold $\mathds{A}^n/\Gamma$ of dimension $n$, where $\Gamma$ is an affine transformation group acting on the complete affine space $\mathds{A}^n$, and $K(\Gamma, 1)$ is realized as a finite CW-complex. 
$N$ has a \emph{$k$-partially hyperbolic holonomy group} if the tangent bundle pulled back over the unit tangent bundle of a sufficiently large compact subset splits into expanding, neutral, and contracting subbundles along the geodesic flow, where the expanding and contracting
subbundles are $k$-dimensional with $k < n/2$.
In part 1, we will demonstrate that the complete affine $n$-manifold has a $P$-Anosov linear holonomy group for a parabolic subgroup $P$ of $\GL(n, \bR)$ if and only if it has a partially hyperbolic linear holonomy group. This had never been done over the full general linear group before this paper. Part 1 will primarily employ representation theory techniques.
In part 2, we demonstrate that if the holonomy group is partially hyperbolic of index $k$, where $k < n/2$, then $\mathrm{cd}(\Gamma) \leq n-k$. Moreover, if a finitely-presented affine group $\Gamma$ acts properly discontinuously and freely on $\mathds{A}^n$ with a $k$-Anosov linear subgroup for $k \leq n/2$, then $\mathrm{cd}(\Gamma) \leq n-k$.
Also, there exists a compact collection of $n-k$-dimensional affine subspaces where $\Gamma$ acts. The techniques employed here mostly stem from the coarse geometry theory.
Canary and Tsouvalis previously proved the same result using the powerful method of Bestvina and Mess for word hyperbolic groups; however, our approach differs in that  our method projects the holonomy cover to a stable affine subspace, and we plan to generalize to relative Anosov groups.

		
	\end{abstract}
	\maketitle
	\tableofcontents

\part{Hyperbolic bundles and  Anosov representations} \label{part2}  

\section{Introduction}


\subsection{Notation}
Let $\Aff(\mathds{A}^n)$ denote the group of affine transformations of 
an affine space $\mathds{A}^n$
whose elements are of the form: 
\[  x \mapsto A x + \bv   \]
for a vector $\bv \in \bR^{n}$ and $A \in \GL(n, \bR)$. 
Let $\mathcal{L}: \Aff(\mathds{A}^n) \ra \GL(n, \bR)$ denote 
the map sending the affine transformations to their linear parts.

 Let $\Gamma$ be an affine group acting properly and freely on $\mathds{A}^n$. 
 We assume that the Eilenberg-Mac Lane space $K(\Gamma, 1)$ for dimension one is realized as a finite complex, which is a fairly mild assumption in our field. 
 Let $\mathds{A}^n/\Gamma$ be a manifold equipped with a complete Riemannian metric structure. 

Let $M$ be a compact submanifold (with or without boundary) in $\mathds{A}^n/\Gamma$, where $\Gamma$ acts properly discontinuously and freely on $\mathds{A}^n$. We assume that the inclusion map $M \hookrightarrow \mathds{A}^n/\Gamma$ induces a surjection on fundamental groups. The inverse image $\hat{M}$ of $M$ in $\mathds{A}^n$ is connected, and the deck transformation group $\Gamma_M$ of $\hat{M}$ is isomorphic to $\Gamma$. In these cases, we refer to $M$, the map $M \hookrightarrow \mathds{A}^n/\Gamma$, $\hat{M}$, the projection $p_{\hat{M}}: \hat{M} \to M$, and $\Gamma_M$ as an \emph{fundamental group surjective (FS) submanifold}, an \emph{FS map}, an \emph{FS cover}, an \emph{FS covering map}, and an \emph{FS deck transformation group}, respectively.


A \emph{compact core} of a manifold is a compact submanifold homotopy equivalent to the ambient manifold. 
When a manifold is homotopy equivalent to a finite complex $K$, for $n=3$, Scott and Tucker \cite{ST89} proved the existence of a compact core, while for $n - \dim K \geq 3$, Stallings \cite{Stallings65} established this fact. Unfortunately, compact cores do not always exist (see Venema \cite{Venema80}). Although compact cores are preferable for our arguments, we sometimes need to resort to FS submanifolds.

Let $d_E$ denote the chosen standard Euclidean metric on $\mathds{A}^n$, fixed for this paper. Now, $\mathds{A}^n$ has an induced complete $\Gamma$-equivariant Riemannian metric from $N := \mathds{A}^n/\Gamma$, denoted by $\dN$, where $\tilde{N} = \mathds{A}^n$.

We will assume that $\partial M$ is convex in this paper. Let $d_M$ denote the path metric induced from a Riemannian metric on $\mathds{A}^n/\Gamma$, and let $d_{\hat M}$ denote the path metric on $\hat{M}$ induced from it.

  



\begin{itemize} 
\item Let $\Uu M$ denote the space of direction vectors on $M$, i.e., 
the space of elements of the form 
$(x, \vec{v})$ for $x\in M$ and a unit vector $\vec{v} \in T_x M - \{O\}$ 
with the projection $\pi_{\Uu M}: \Uu M \ra M$. 
\item Let $\Uu \hat M$ denote the space of direction vectors on $\hat M$
covering $\Uu M$ with the deck transformation group $\Gamma_M$.  
Let $\pi_{\Uu \hat M} : \Uu \hat M \ra \hat M$ denote the projection.
 \item  Let $d_{\Uu M}$ denote the path metric on $\Uu M$ obtained from
  the natural Riemannian metric
on the unit tangent bundle $\Uu M$ of $M$ 
obtained from the Riemannian metric of $M$
as a sphere bundle.   We will use 
$d_{\Uu \hat M}$ to denote the induced path metric on $\Uu \hat M$. 
\end{itemize} 


  
  A {\em complete isometric geodesic} in $\hat M$ is a geodesic that is an isometry of 
  $\bR$ into $\hat M$ equipped with $M$. 
 Some but not all Riemannian geodesics are isometries. 
Actually, in the geodesic metric spaces, these are identical to the notion of geodesics.
Since we are working on Riemannian metric spaces, we use adjectives.  
  A {\em complete isometric geodesic} in $M$ is a geodesic that lifts to 
  a complete isometric geodesic in $\hat M$. 
A {\em ray} in $\hat M$ is an isometric geodesic defined on  
$[0, \infty)$. 
In this paper, the geodesic is always an isometric geodesic. 

  
%
  

We consider the subspace of $\Uu M$ where complete isometric geodesics pass. We denote this subspace by $\Uc M$, and call it the \emph{complete isometric geodesic unit-tangent bundle}. The inverse image in $\Uu \hat M$ is a closed set denoted by $\Uc \hat M$. Since a limit of a sequence of isometric geodesics is an isometric geodesic, $\Uc M$ is compact and $\Uc \hat M$ is closed and locally compact. However, $\pi_{\Uu M}(\Uc M)$ may be a proper subset of $M$ for the projection $\pi_{\Uu M}: \Uu M \to M$, and 
$\pi_{\Uu \hat M}(\Uc \hat M)$ may be a proper subset of $\hat M$.

Let $\rho: \Gamma_M \ra \GL(n, \bR)$ be a homomorphism. 
There is an action of $\Gamma_M$ 
on $\Uc \hat M \times \bR^n$ given by 
\[ \gamma\cdot(x, \vec{v})  = (\gamma(x), \rho(\gamma)(\vec{v})) 
\hbox{ for } x \in \Uc\hat M, \vec{v} \in \bR^n.\]
Let $\bR^n_{\rho}$ denote the bundle over $\Uc M$
 given by taking the quotient $\Uc \hat M \times \bR^n$ by 
 $\Gamma_M$. 

Recall that there is a geodesic flow $\phi$ on $\Uu M$ restricting to one on $\Uc M$, also denoted by $\phi$. This lifts to a flow, still denoted by $\phi$, on $\Uc \hat M$. Hence, there is a flow $\Phi$ on $\Uc \hat M \times \mathbb{R}^n$, which descends to a flow, also denoted by $\Phi$, on $\mathbb{R}^n_{\rho}$.


\subsection{Main result} 
In this paper, we establish connections between partial hyperbolicity in the context of dynamical systems and P-Anosov properties. The significance and motivation of the main result are detailed in Part 2.

\red{Unlike the abstract dominated splitting discussed in many previous articles such as \cite{BPS} and \cite{KP22}, here we require actual splitting of smooth bundles and smooth flows, along with the presence of expanding and contracting properties in addition to the domination property.}



We  need the following objects
since we need to work on manifolds for Part 2. 
\begin{definition}[Partial hyperbolicity]\label{defn:phyp} 
	Suppose that $M$ is a compact  Riemannian manifold with convex boundary.
	Let $\hat M$ be a regular cover of $M$ with the deck transformation group $G_M$. 
	A representation $\rho: G_M \ra \GL(n, \bR)$ is {\em partially hyperbolic in a bundle sense} if 
	the following hold for a Riemannian metric on $M$: 
	\begin{enumerate} 
		\item[(i)] 
		There exist nontrivial $C^0$-subbundles $\bV_+, \bV_0$, and $\bV_-$ in $\bR^n_{\rho}$ 
		with fibers \[\bV_+(x), \bV_0(x), \bV_-(x) \subset \bV(x)\] for the fiber $\bV(x)$ of $\bV$ over each point $x$ of $\Uc M$
		invariant under the flow $\Phi_t$ of $\bV$ over 
		the geodesic flow $\phi_t$ of $\Uc M$. 
		\item[(ii)] $\bV_+(x), \bV_0(x)$ and $\bV_-(x) $ for 
		each $x \in \Uc M$ are independent subspaces, and their sum equals $\bV(x)$.
		\item[(iii)] For any fiberwise metric on $\bR^n_{\rho}$ over $\Uc M$, 
		the lifted action of $\Phi_{t}$ on $\bV_{+}$ (resp. $\bV_{-}$) is dilating (resp. contracting). 
		That is, for coefficients $A> 0, a> 0$, $A' > 0, a'>0$\/: 
		\begin{enumerate} 
			\item $\llrrV{\Phi_{-t}(\vv)}_{\bR^n_\rho, \phi_{-t}(m)} \leq A \exp(-a t)\llrrV{\vv}_{\bR^n_\rho, m}$ for $\vv \in \bV_+(m)$ as $t \ra \infty$. 
			\item $\llrrV{\Phi_{t}(\vv)}_{\bR^n_\rho, {\phi_{t}(m)}} \leq A \exp(-a t)\llrrV{\vv}_{\bR^n_\rho, m}$ for $\vv \in \bV_-(m))$ as $t \ra \infty$. 
			\item (Dominance properties) 
			\begin{multline} \label{eqn:dominance} 
				\frac{\llrrV{\Phi_{t}(\vw)}_{\bR^n_\rho,\phi_{t}(m)}}{\llrrV{\Phi_{t}(\vv)}_{\bR^n_\rho, \phi_{t}(m)}} 
				\leq A' \exp(-a't)\frac{\llrrV{\vw}_{\bR^n_\rho, m}}{\llrrV{\vv}_{\bR^n_\rho, m}} \\
				\hbox{ for } \vv \in \bV_+(m), \vw \in \bV_0(m)
				\hbox{ as } t \ra \infty, \\
				 \hbox{ or for }
				 \vv \in \bV_0(m), \vw \in \bV_-(m)
				 \hbox{ as } t \ra \infty.
			\end{multline} 
		\end{enumerate} 
	\end{enumerate}
	We additionally require that $\dim \bV_+ = \dim \bV_- \geq 1$.  
	Here $\dim \bV_+$ is the {\em partial hyperbolicity index} of $\rho$
	for the associated bundle decomposition $\bV_+, \bV_0, \bV_-$ 
	where $\dim \bV_+$ is maximal
	and $\dim \bV_+ = \dim \bV_- < n/2$, so that $\dim \bV_0 \geq 1$. 
 Furthermore,	$\bV_0$ is said to be the {\em neutral subbundle} of $\bV$. 
	(See Definition 1.5 of \cite{CP} or \cite{BLM}.) 
\end{definition} 
The definition does depend on the Riemannian metric but not on $\llrrV{\cdot}_{\bR^n_{\rho}}$. However, if $\Gamma_M$ is word hyperbolic, then we can replace $\Uc \hat M$ with a Gromov flow space which depends only on $\Gamma_M$ up to quasi-isometry. (See Theorem \ref{thm:identify} and Gromov \cite{Gromov87} or Mineyev \cite{Mineyev}.)

We say that a complete affine manifold $N$ has a \emph{partially hyperbolic linear holonomy group} if
\begin{itemize}
\item there exists an FS submanifold $M$ in $N$ with convex boundary for a Riemannian metric on $N$, and
\item the linear part $\rho: \pi_1(N) = \Gamma_{\hat M} \to \GL(n, \mathbb{R})$ of an affine holonomy homomorphism $\rho'$ is a partially hyperbolic representation in a bundle sense.
\end{itemize}

We say that a complete affine manifold $N$ has a \emph{partially hyperbolic linear holonomy group} if
\begin{itemize}
\item there exists an FS submanifold $M$ in $N$ with convex boundary for a Riemannian metric on $N$, and
\item the linear part $\rho: \pi_1(N) \cong \Gamma_{\hat{M}} \to \GL(n, \mathbb{R})$ of an affine holonomy homomorphism $\rho'$ is partially hyperbolic in a bundle sense.
\end{itemize}




Since partial hyperbolicity is equivalent to the Anosov property by Theorem \ref{thm:main2}, $\pi_1(N)$ is always hyperbolic. By Theorem \ref{thm:identify}, there exists a quasi-isometry from $\Uc \hat M$ to the Gromov flow space $\partial^{(2)}_\infty M \times \mathbb{R}$ preserving the geodesic length parameter. Hence, the discussion reduces to one on the Gromov flow space, which is independent of the choice of FS submanifolds.

Let $\log \lambda_1, \dots, \log \lambda_n$ denote the logarithmic eigenvalue functions defined on
the maximal abelian Lie algebra of $\mathfrak{gl}(n, \bR)$. 
For roots 
\[\theta=\left\{\log \lambda_{i_{1}}-\log \lambda_{i_{1}+1}, \ldots, \log \lambda_{i_{m}}- \log \lambda_{i_{m}+1}\right\}, 
1 \leq i_{1}<\cdots<i_{m} \leq n-1,\] 
of $\mathrm{GL}(n, \bR)$, 
the parabolic
subgroup $P_{\theta}$ (resp. $\left.P_{\theta}^{-}\right)$ is 
the subgroup of block upper 
(resp. lower) triangular matrices in $\mathrm{GL}(n,\bR)$ with square diagonal blocks 
of sizes $i_{1}, i_{2}-i_{1}, \ldots, i_{m}-i_{m-1}, n-i_{m}$ respectively.

\begin{theorem}\label{thm:main2} 
Let $N$ be a complete affine $n$-manifold where 
 $K(\pi_1(N), 1)$ is realized by a finite complex. 
	Let $\rho: \pi_1(N)\ra \GL(n, \bR)$ is a linear part of an affine holonomy homomorphism $\rho'$. 

Then $\rho$ is $P$-Anosov for 
	any parabolic subgroup $P$ of $\GL(n, \bR)$ with 
	$P = P_\theta$ for $\theta$ containing $\log \lambda_k-\log \lambda_{k+1}, k \leq n/2$ 
	if and only if $\rho$ is partially hyperbolic of index $k$. 
	In these cases $k < n/2$. 
\end{theorem}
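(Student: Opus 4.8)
\medskip

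The plan is to establish the equivalence by showing that each side is equivalent to a common intermediate condition phrased in terms of singular-value growth of $\rho(\gamma)$ along quasi-geodesics in $\Gamma = \pi_1(N)$, and then to invoke the standard characterizations of the Anosov property (in the Kapovich--Leeb--Porti / Gu\'eritaud--Guichard--Kassel--Wienhard formulation) together with the bundle criterion of Definition~\ref{defn:phyp}. First I would record that partial hyperbolicity of index $k$ in the bundle sense over $\Uc M$ gives, after lifting to $\Uc \hat M$, a $\Phi$-invariant splitting $\bR^n_\rho|_{\Uc\hat M} = \bV_+\oplus\bV_0\oplus\bV_-$ with the dilation/contraction estimates (iii)(a),(b) and the dominance estimate \eqref{eqn:dominance}. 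The key geometric input is that complete isometric geodesics in $\hat M$ fellow-travel orbits of $\Gamma$ acting on itself: because $\Gamma$ is word-hyperbolic once partial hyperbolicity holds (indeed, by Theorem~\ref{thm:main2} itself it will be, but for the forward direction one argues this independently, e.g. via the Morse-type consequences of a dominated splitting, or one simply observes that an FS submanifold with the flow space being Gromov-hyperbolic forces hyperbolicity of $\Gamma_M$), the geodesic flow on $\Uc\hat M$ is orbit-equivalent to the Gromov flow space $\partial^{(2)}_\infty\Gamma\times\bR$ by Theorem~\ref{thm:identify}, with the flow parameter matching word length up to a multiplicative and additive constant. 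Evaluating the estimates of Definition~\ref{defn:phyp}(iii) along a bi-infinite geodesic whose positive endpoint we fix then translates directly into: for every geodesic ray $(\gamma_m)$ in $\Gamma$, the ratio of the top $k$ singular values of $\rho(\gamma_m)$ to the $(k+1)$-st grows at least linearly in $m$, uniformly; this is precisely the singular-value-gap characterization of $P_\theta$-Anosov for any $\theta$ containing $\log\lambda_k-\log\lambda_{k+1}$.

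\medskip

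For the converse, I would start from $P_\theta$-Anosovness with $\theta\ni\log\lambda_k-\log\lambda_{k+1}$, $k\le n/2$. The $\{k\}$-Anosov property supplies continuous, $\rho$-equivariant limit maps $\xi^k:\partial_\infty\Gamma\to\mathrm{Gr}_k(\bR^n)$ and $\xi^{n-k}:\partial_\infty\Gamma\to\mathrm{Gr}_{n-k}(\bR^n)$ that are transverse and dynamics-preserving, together with uniform exponential expansion of the induced flow on the associated bundle over the Gromov flow space (this is the characterization via the flow space, e.g. in Bridgeman--Canary--Labourie--Sambarino). Pulling these back along the quasi-isometry $\Uc\hat M\simeq\partial^{(2)}_\infty\Gamma\times\bR$ of Theorem~\ref{thm:identify} produces a $\Phi$-invariant $C^0$-subbundle $\bV_+$ of $\bR^n_\rho$ over $\Uc M$ of rank $k$, namely at a point $x$ lying on the geodesic with endpoints $(\eta_-,\eta_+)$ one sets $\bV_+(x)=\xi^k(\eta_+)$ and $\bV_-(x)=\xi^k(\eta_-)$ (equivalently $\xi^{n-k}(\eta_+)\cap\xi^{n-k}(\eta_-)$ for the neutral part). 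Transversality of $\xi^k$ and $\xi^{n-k}$ gives property (ii), i.e. $\bV_+\oplus\bV_0\oplus\bV_-=\bR^n_\rho$; the exponential expansion/contraction of the flow on these sub-bundles is exactly (iii)(a),(b); and the dominance \eqref{eqn:dominance} between consecutive pieces follows from the gap between the $k$-th and $(k+1)$-st singular values, which is what $\theta$-Anosovness with that particular simple root delivers. One must also check the dimension constraint $\dim\bV_+=\dim\bV_-=k<n/2$ so that $\bV_0\neq 0$; here the crucial point is the affine hypothesis --- the cohomological/Fried--Goldman-type obstruction for complete affine manifolds forces $k<n/2$, since a $P_k$-Anosov linear holonomy of a complete affine manifold cannot have $k=n/2$ (the translational part would otherwise fail to produce a proper action), and this is precisely where the "complete affine" assumption, and the reference to \cite{AF2}, enter.

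\medskip

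I expect the main obstacle to be the careful bookkeeping needed to move between the three different ``flow spaces'' --- the complete-isometric-geodesic unit tangent bundle $\Uc\hat M$ of the auxiliary FS submanifold, the Gromov flow space of $\Gamma$, and the abstract geodesic flow used in the Anosov-representation literature --- while keeping track of the fiberwise metrics on $\bR^n_\rho$ and verifying that the estimates in Definition~\ref{defn:phyp}(iii), which are stated ``for \emph{any} fiberwise metric,'' really are independent of the partition-of-unity choice of $\llrrV{\cdot}_{\bR^n_\rho}$ and of the Riemannian metric on $M$ up to the allowed changes of the constants $A,a,A',a'$. The quasi-isometry of Theorem~\ref{thm:identify} only respects the length parameter up to bounded error, so exponential estimates transport cleanly but one has to be attentive that a priori the geodesic flow on $\Uc\hat M$ need not be smooth or even have unique orbits; the fact that isometric geodesics have well-behaved limits (stated in the excerpt, giving compactness of $\Uc M$) is what rescues this. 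A secondary subtlety is that $\pi_{\Uu\hat M}(\Uc\hat M)$ may be a proper subset of $\hat M$, so ``sufficiently large compact part'' must be chosen so that every geodesic of interest is captured; once the quasi-isometry to the Gromov flow space is in place this is automatic, but it needs to be said. Everything else --- the singular-value characterizations, transversality of limit maps, the exponential-dominance dictionary --- is by now standard and can be cited.
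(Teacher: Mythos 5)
There is a genuine gap in the converse direction (Anosov $\Rightarrow$ partially hyperbolic), and it is precisely the step you dismiss as ``exactly (iii)(a),(b).'' The $P_\theta$-Anosov property with $\theta \ni \log\lambda_k - \log\lambda_{k+1}$ gives only a \emph{relative} (dominated) estimate: the ratio $a_k(\rho(g))/a_{k+1}(\rho(g))$ grows exponentially in word length. Definition~\ref{defn:phyp}(iii)(a),(b) demands \emph{absolute} dilation of $\bV_+$ and absolute contraction of $\bV_-$, i.e.\ $a_k(\rho(g)) \geq A''\exp(C''w(g))$ and the dual bound on $a_{n-k+1}$. For a general Anosov representation this is simply false (twist by a character of $\Gamma$ into the center of $\GL(n,\bR)$ and all singular values can be made to grow or decay at will without affecting the gaps). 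The entire content of the hard direction of the paper is extracting the absolute bounds from the affine hypothesis: Theorem~\ref{thm:HKS} (Hirsch--Kostant--Sullivan) forces every element of the Zariski closure of $\rho(\pi_1(N))$ to have eigenvalue $1$, and Proposition~\ref{prop:PAnosovStrict} then runs a combinatorial argument on the Benoist cone (via the Breuillard--Sert joint spectrum and the index function recording where the Jordan projection vanishes) to show that the ``eigenvalue-one slots'' are confined to $[k+1,n-k]$; combined with the Anosov gap $a_k/a_{k+1}$ this yields the absolute growth of $a_k$ and, as a byproduct, $k<n/2$ because $[k+1,n-k]$ is nonempty. Your proposed mechanism for $k<n/2$ --- a Fried--Goldman-type properness obstruction on the translational part --- is not what is used and is not substantiated; nothing in your argument ever uses that $\rho$ is the linear part of a \emph{complete affine} holonomy except this one unproved assertion.

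A second omission: the eigenvalue-one/Benoist-cone argument requires the Zariski closure of $\rho(\Gamma)$ to be (connected) reductive, so the paper must first treat that case (after passing to a finite-index subgroup) and then handle general $\rho$ by passing to the semisimplification $\rho^{ss}$, which is again $P$-Anosov and still has eigenvalue $1$ everywhere (eigenvalues are unchanged under projection to the Levi factor), and finally transporting the partially hyperbolic splitting back to $\rho$ by a limiting argument: conjugates of $\rho$ accumulate on $\rho^{ss}$, the Anosov limit maps converge in $C^0$, and the expansion/contraction constants survive for large enough time by compactness of $M$. None of this appears in your outline. Your forward direction (partially hyperbolic $\Rightarrow$ $k$-dominated $\Rightarrow$ Anosov, via the quasi-isometry of $\Uc\hat M$ with the Gromov flow space) matches the paper's short argument via Proposition 4.5 of \cite{BPS} and Lemma~\ref{lem:Equiv}, and your bookkeeping concerns about the three flow spaces are legitimate but secondary; the missing idea is the absolute-versus-relative distinction above.
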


The hyperbolic flow was already discovered by Goldman and Labourie \cite{GL12} for $3$-dimensional Margulis space-times, and Ghosh \cite{Ghosh17}, Ghosh-Treib \cite{GhoshTreib21}, and Danciger-Zhang \cite{DZ2019} prove these properties for affine actions on higher dimensional affine spaces. However, the linear parts are always assumed to be in the orthogonal groups where the properties are more or less immediate.

\red{The significance of this result is that we work with the full general linear group $\GL(n, \mathbb{R})$ instead of orthogonal groups. Also, Anosov representations correspond to only dominated representations and not actual partially hyperbolic flows as here. Promoting the dominated representation to Anosov ones is trivial for some Lie groups but not for general linear groups. }



We aim to prove in Part 2 the following theorem worked out with Kapovich. 
\begin{theorem}[Theorem \ref{thm:main-p2}]\label{thm:mainofpart2} 
Let $N$ be a complete affine manifold  for $n \geq 3$ with the finitely presented fundamental group. 
Suppose that $N$ has a partially hyperbolic linear holonomy group with index $k, k < n/2$,  and $K(\pi_1(N), 1)$ is realized by a finite complex.  

Then the cohomological dimension $\mathrm{cd}(\pi_1(N))$ is $\leq n-k$ for the partial hyperbolicity index $k$ of $\rho$. 
\end{theorem} 
\red{This result coincides with Canary-Tsouvalis \cite{CT2020} on linear Anosov representations for $\SL(n, \mathbb{R})$ up to minor additional arguments, while our paper employs a significantly different method of homotopying the manifold into an affine subspace. However, we aim to generalize most of these methods and results to complete affine manifolds with relatively Anosov holonomy representations, a field that is still developing. See Kapovich-Leeb \cite{kapovich2023relativizing}, Zhu \cite{Zhu21,Zhu23}, and Zhu-Zimmer \cite{zhu2022p}. The approach of Canary-Tsouvalis \cite{CT2020}, which builds upon the work of Bestvina-Mess \cite{BM91}, essentially does not extend straightforwardly to the context of relativizing.}


\red{The reason for dividing into two parts is that the main techniques are distinct, and it seems distracting to switch gears in the middle of the paper. We believe this approach is more reader-friendly. The first part focuses on partially hyperbolic flows, primarily working with singular values and flows. The second part delves into coarse geometry to a greater extent.}

\red{Additionally, the techniques involved--- Anosov representations, partially hyperbolic flows, and coarse geometry --- may not be familiar to traditional geometric topologists as they are rapidly developing fields. However, the results used are grounded in well-established papers.}

This paper aimed to address the conjecture that a closed complete affine manifold cannot have word hyperbolic fundamental groups. One approach considered is using Osedelec-type theorems (See Chapter 8 of \cite{KH95}). However, generalizing our results to measurable settings is currently hindered by numerous technical issues. Discussions on these matters took place with researchers such as Nicholas Tholozan at BIRS, Banff, during the winter of 2020.





\subsection{History of the field relevant to this result} 
Originally, Milnor posed the question of whether a free affine group of rank $\geq 2$ could act freely and properly discontinuously on affine space. Margulis provided an affirmative answer by constructing examples in the affine $3$-space, where the linear parts belong to $\SO(2, 1)$. Drumm and Goldman subsequently explained this result using fundamental domains in a series of papers (see \cite{DDGS} for a survey). Sullivan \cite{Sullivan76} observed that the linear part of each affine group element must have $1$ as an eigenvalue and proved the Chern conjecture for complete affine manifolds \cite{KS75}. Klingler \cite{Klingler2017} later extended this to closed special affine manifolds.

Further developments include Ghosh \cite{Ghosh17,Ghosh18,ghosh2018avatars,Ghosh23}, Ghosh-Treib \cite{GhoshTreib21}, and Smilga \cite{Smilga14,Smilga16,Smilga18,Smilga22,Smilga22ii}, who generalized these results and produced higher-dimensional examples of proper and free affine actions. Abels, Margulis, and Soifer also contributed to related topics such as the Auslander conjecture (see Abels \cite{Abels01} for a survey). Danciger and Zhang \cite{DZ2019} investigated affine deformations of Hitchin representations, showing they do not exist. Danciger, Kassel, and Gu\'ertaud \cite{DGK2020} provided examples of closed lower-dimensional manifold fundamental groups acting properly and freely on affine spaces.

While these works are related to the themes of this paper, their specific aims differ. This summary only scratches the surface of this rapidly evolving field.

\subsection{Outline of Part 1}

In Section \ref{sec:preliminary-p1}, we will introduce  affine structures and 
some basic facts on $\delta$-hyperbolic metric spaces. 
Suppose that $\Gamma_M$ is word-hyperbolic.
Then $\hat M$ has a Gromov hyperbolic Riemannian metric
by the Svarc-Milnor theorem. 
We discuss the ideal boundary of $\hat M$, identifiable with the Gromov boundary, and the complete isometric geodesics in $\hat M$. 
We relate the space of complete isometric geodesics to
the Gromov flow space. 

In Section \ref{sec:phyp}, 
we provide a review of the definition of the $P$-Anosov property of 
Guichard-Wienhard \cite{GW12}. For convenience for reductive groups, we will use 
the approaches of G\'ueritaud-Guichard-Kassel-Wienhard \cite{GGKW17ii} and Bochi-Potrie-Sambarino \cite{BPS}
using singular values of the linear holonomy group. (The theory of 
Kapovich, Leeb, and Porti \cite{KL18}, \cite{KLP18iii} is equivalent to these, but it is not so
explicit for our purposes here.)
We give various associated definitions. 

In Section \ref{sec:PAnosov}, we have the main arguments. 
In Section \ref{sub:kconvexity}, we will relate the partial hyperbolic property in the singular value sense to that in the bundle sense. 
In Section \ref{sub:converse}, we show that the linear part of 
the holonomy representation of a complete affine manifold 
is $P$-Anosov if and only if it is partially hyperbolic. 
First, we do this when the linear holonomy group
has a reductive Zariski closure. 
This involves the fact that $1$ has to be an eigenvalue of each element of 
the linear holonomy groups of complete affine manifolds. We relate it
to singular values using the clever ideas of Danciger and Stecker,
the spectrum theory of Breulliard-Sert \cite{BS2021}, Benoist \cite{Benoist97}, \cite{Benoist98}, and the work of Potrie-Kassel \cite{KP22}.
We could generalize to the cases of nonreductive linear holonomy groups using 
the stability of the Anosov and partial hyperbolic properties. 
This will prove Theorem  \ref{thm:main2}. 

\subsection{Acknowledgments}
We thank Michael Kapovich for various help with geometric group theory and coarse geometry. This article began with some discussions with Michael Kapovich during the conference honoring the 60th birthday of William Goldman at the University of Maryland, College Park, in 2016. 
Important suggestions for proofs of our main Theorems \ref{thm:main2} and \ref{thm:mainofpart2} were made by Michael Kapovich.
We thank Jeffrey Danciger and Florian Stecker for help with $P$-Anosov properties which yielded the main idea for proof of Theorem \ref{thm:main2}. 
%
%
We also thank Herbert Abels, Richard Canary, Virginie Charette, Todd Drumm,
William Goldman, Fran\c{c}ois Gu\'eritaud, Fanny Kassel, 
Andr\'es Sambarino, Nicholas Tholozan, and Konstantinos Tsouvalas for various discussions helpful for this paper. 
We also thank BIRS, Banff, Canada, and KIAS, Seoul, where some of this work was done.
\red{This paper is a joint work with Kapovich in the beginning and Danciger and Stecker later. Unfortunately, they did not wish to join as coauthors for various reasons. }

\section{ Preliminary} \label{sec:preliminary-p1} 


\red{We recall some standard well-known facts in this section to set the notations and so on. 
Also, we need a slight modification of the theories for our purposes. Most of the backgrounds on geometric groups and coarse geometry are from the comprehensive textbook by 
Dru\c{t}u and Kapovich \cite{DK2018}.}

\subsection{Convergences of geodesics} \label{sub:geoconv}  
Let $M$ be a manifold of dimension $n$ with a regular cover $\hat{M}$ and the deck transformation group $\Gamma_M$. We assume that $M$ has a Riemannian metric with convex boundary.

We say that a sequence of rays (resp. complete isometric geodesics) $l_i$ \emph{converges} to a ray (resp. complete geodesic) $l$ (denoted as $l_i \to l$) if $l_i(t) \to l(t)$ for each $t \in [0, \infty)$ (resp. $t \in \mathbb{R}$).

By the Arzel\`a-Ascoli theorem and the properness of $\hat{M}$, any sequence of rays (resp. complete isometric geodesics) converges to a ray (resp. complete isometric geodesic) if the sequence of pairs of their $0$-points and directions at the $0$-points converges in $\Uu \hat{M}$. (See Section 11.11 of \cite{DK2018}.)



\subsection{$(G,X)$-structures and affine manifolds}
Let $G$ be a Lie group acting transitively and faithfully on 
a space $X$. 
A {\em $(G, X)$-structure} on a manifold $N$ is a maximal atlas of charts to $X$ so that 
transition maps are in $G$. 
This is equivalent to
$M$ having a pair $(\dev, h)$ where 
$\hat M$  is a regular cover of $M$ with a deck transformation group $\Gamma_M$
such that
\begin{itemize} 
	\item $h:\Gamma_M \ra G$ is a homomorphism, called a {\em holonomy homomorphism}, and 
	\item $\dev: \hat M \ra X$ is an immersion,  
	called a {\em developing map}, satisfying 
	\[\dev \circ \gamma = h(\gamma)\circ \dev \]
 for each deck transformation  $\gamma \in \Gamma_M$.
\end{itemize}


We say that an $n$-manifold $M$ 
with a $(G, X)$-structure  is {\em complete} if $\dev: \hat M \ra X$ is a diffeomorphism. 
If $M$ is complete, 
we have a diffeomorphism 
$M \ra X/h(\Gamma_M)$, and $h(\Gamma_M)$ acts properly discontinuously and freely on $X$. Complete $(G, X)$-structures on $M$ are classified 
by the conjugacy classes of representations $\Gamma_M \ra G$ with properly discontinuous and free actions on $X$. 
(See Section 3.4 of \cite{Thurston97}.)

Note that the completeness and compactness of $M$ have 
no relation for some $(G,X)$-structures 
unless $G$ is in the isometry group of $X$ with a Riemannian metric.
(The Hopf-Rinow lemma may fail.)

Now, we go over to the affine structures. 
Let $\mathds{A}^n$ be a complete affine space. 
	Let $\Aff(\mathds{A}^n)$ denote the group of affine transformations of $\mathds{A}^n$
	whose elements are of the form: 
	\[  x \mapsto A x + \bv   \]
	for a vector $\bv \in \bR^{n}$ and $A \in \GL(n, \bR)$. 
	Let $\mathcal{L}: \Aff(\mathds{A}^n) \ra \GL(n, \bR)$ 
	denote map sending each element of $\Aff(\mathds{A}^n)$ to its linear part in $\GL(n, \bR)$.

		An affine $n$-manifold is an $n$-manifold with  an $(\Aff(\mathds{A}^n), \mathds{A}^n)$-structure. 
		A {\em complete affine $n$-manifold}
			is an $n$-manifold $M$ of the form $\mathds{A}^n/\Gamma$ for $\Gamma \subset \Aff(\mathds{A}^n)$
			acting properly discontinuously and freely on $\mathds{A}^n$.

\subsection{Gromov hyperbolic spaces} \label{sub:Gromov} 


 Let us recall some definitions: 
A metric space is {\em geodesic} if every pair of points are connected by a geodesic, i.e., 
a path that is an isometry from an interval to the metric space.  
We will only work with geodesic metric spaces.
 Let $(X, d_X)$ and $(Y, d_Y)$ be metric spaces. A map $f: X \ra Y$ is called 
{\em  $(L, C')$-coarse Lipschitz} for $L, C > 0$ if 
 \[d_Y(f(x), f(x') \leq L d_X(x, x') + C' \hbox{ for all } x, x' \in X.\] 
 A map $f: X \ra Y$ is an {\em $(L, C)$-quasi-isomeric embedding} if 
 $f$ satisfies 
 \[L^{-1} d_X(x, x') - C \leq d_Y(f(x), f(x')) \leq L d_X(x, x')  + C \hbox{ for all }x, x'\in X.\] 
 A map $\bar f: Y\ra X$ is a {\em $C$-coarse inverse} of $f$ 
 for $C > 0$ if 
\[d_X(\bar f \circ f, \Idd_X) \leq C, \hbox{ and }
d_Y(f\circ \bar f, \Idd_Y)\leq C.\]  
A map $f:X \ra Y$ between metric spaces is called 
 a {\em quasi-isometry} if it is a coarse Lipschitz map and admits a 
 coarse Lipschitz coarse inverse map.
(Note that these are not necessarily continuous as we follow 
Dru\c{t}u-Kapovich \cite{DK2018}.) 

\begin{lemma} \label{lem:piUUM} 
	$\pi_{\Uu \hat M}$ is a quasi-isometry.
	\end{lemma}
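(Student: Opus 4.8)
The plan is to deduce this from two soft facts: that $M$, being compact, has a unit tangent bundle $\Uu M$ of finite diameter whose fibers have uniformly bounded diameter, and that the natural sphere-bundle projection is length-non-increasing. Write $\pi := \pi_{\Uu \hat M}$ and fix the maximal diameter $D$ of a fiber of $\pi_{\Uu M}\colon \Uu M \ra M$; pulling back along $\Uu \hat M \ra \Uu M$, every fiber of $\pi$ has $d_{\Uu \hat M}$-diameter $\le D$, since a path lying in one fiber projects to a constant path in $\hat M$ and hence cannot leave that fiber, whose intrinsic metric is the one induced from $\Uu M$.

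The first half of the quasi-isometric-embedding estimate is immediate: for the natural Riemannian metric on $\Uu \hat M$ the differential $d\pi$ annihilates vertical vectors and restricts to an isometry on horizontal vectors, so $d\pi$ is norm-non-increasing; projecting a path shows $d_{\hat M}(\pi(p),\pi(q)) \le d_{\Uu \hat M}(p,q)$ for all $p, q \in \Uu \hat M$, i.e. $\pi$ is $1$-Lipschitz. For the reverse estimate I would use horizontal lifts. Given $p = (x,\vec v)$ and $q = (x', \vec v')$, let $c\colon [0,\ell] \ra \hat M$ be a shortest path from $x$ to $x'$, of length $\ell = d_{\hat M}(x,x')$ — such a path exists because $\hat M$, being a covering of the compact (hence complete) length space $M$, is a proper geodesic space by Hopf--Rinow. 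Parallel transport along $c$ with respect to the Levi-Civita connection of $\hat M$ produces the horizontal lift $\hat c$ of $c$ starting at $p$; it stays in $\Uu \hat M$ because parallel transport preserves norms, and its velocity is horizontal with norm (in the natural metric) equal to the norm of $\dot c$, so $\hat c$ has length $\ell$ and ends at a point $(x', \vec w) \in \Uu \hat M$. Joining $(x', \vec w)$ to $q$ by a path inside the fiber over $x'$, of length $\le D$, yields $d_{\Uu \hat M}(p,q) \le \ell + D = d_{\hat M}(\pi(p),\pi(q)) + D$.

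Hence $\pi$ is a $(1,D)$-quasi-isometric embedding; since $\pi$ is surjective its image is $D$-dense in $\hat M$, and a quasi-isometric embedding with coarsely dense image is a quasi-isometry (build a coarse inverse by choosing, for each $y$, a point of $\Uu\hat M$ mapping $D$-close to $y$), which finishes the argument.

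The step I expect to need the most care is the interaction with the convex boundary $\partial M$: I must make sure that shortest paths in $\hat M$ between interior or boundary points exist and are rectifiable, and that parallel transport, hence the horizontal lift, is well defined along them. Convexity of $\partial M$ is precisely what rules out pathologies here — it makes $d_{\hat M}$ a genuine geodesic metric and keeps shortest paths inside $\hat M$ — while parallel transport only needs the lifting curve to be absolutely continuous, which a shortest path is. As an alternative that avoids lifts altogether, one can argue by the Svarc--Milnor lemma: $\Gamma_M$ acts geometrically (properly discontinuously, cocompactly, by isometries) on the proper geodesic spaces $\hat M$ and $\Uu \hat M$, so both orbit maps $\gamma \mapsto \gamma \cdot p_0$ and $\gamma \mapsto \gamma \cdot \pi(p_0)$ are quasi-isometries from $\Gamma_M$; since $\pi$ is $\Gamma_M$-equivariant and coarse Lipschitz, composing the second orbit map with a coarse inverse of the first exhibits $\pi$, up to bounded error, as a quasi-isometry.
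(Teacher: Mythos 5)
Your proof is correct and is essentially an expanded version of the paper's argument, whose entire content is the observation that the fibers of $\pi_{\Uu \hat M}$ are uniformly bounded (the $1$-Lipschitz property and the horizontal-lift estimate being left implicit in the standard structure of the sphere-bundle metric). The extra care you take with the convex boundary and the Svarc--Milnor alternative are both fine but not needed beyond what the paper already assumes.
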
 
\begin{proof}
	Each fiber is uniformly bounded under $d_{\Uu \hat M}$.  
	\end{proof}

A metric space $(X, d)$ is {\em proper} if every metric ball has a compact closure. 
In other words, $d_p(\cdot) = d(p, \cdot)$ is a proper function $X \ra \bR$. Clearly,
a complete Riemannian manifold is a proper metric space.
We denote by $B^d(x, R)$ the ball of radius $< R$ in $X$ with the center $x \in X$. 

Let $X$ be a geodesic metric space with metric $d$. 
A {\em geodesic triangle $T$} is a concatenation of three geodesics $\tau_1, \tau_2, \tau_3$ where the indices are modulo $3$ ones. 
The {\em thinness radius} of a geodesic triangle $T$ is the number 
\[ \delta(T) := \max_{j=1,2,3} \left( \sup_{p \in \tau_j} d(p, \tau_{j+1}\cup \tau_{j+2}) \right). \]

A geodesic metric space $X$ is called {\em $\delta$-hyperbolic} in the sense of Rips if 
every geodesic triangle $T$ in $X$ is $\delta$-thin. 

By Corollary 11.29 of \cite{DK2018} as proved by Section 6.3C of 
Gromov \cite{Gromov87}, 
the Gromov hyperbolicity is equivalent to the Rips hyperbolicity
for geodesic metric spaces.
We will use these concepts interchangeably.

We will assume that $\Gamma_M$ is word-hyperbolic for our discussions below. 
This is equivalent to assuming that 
$\hat M$ is Gromov hyperbolic by the Svarc-Milnor Lemma 
(see \cite{Milnor} and \cite{Svarc}).

Two rays $\rho_1$ and $\rho_2$ of $X$ are {\em equivalent} if 
$t\mapsto d(\rho_1(t), \rho_2(t))$ is bounded.  
This condition is equivalent to the one that their Hausdorff distance under $d$ is bounded. 
Assume that $X$ is $\delta$-hyperbolic. $X$ has a well-defined ideal boundary 
$\partial_\infty X$ as in Definition 3.78 of \cite{DK2018}, 
i.e., the space of equivalence classes of geodesic rays. 
 (See \cite{CDP90} and \cite{GdH90}.) 
 We denote by $\partial^{(2)}_\infty X = \partial_\infty X \times \partial_\infty X - \Delta$ 
 where $\Delta$ is the diagonal set. 

The constant $C$ below is called a {\em quasi-geodesic} constant. 
 \begin{lemma} \label{lem:twogeo} 
	Let $M$ be a compact manifold with 
	the induced path metric $d_{\hat M}$ on $\hat M$
	from a Riemannian metric of $M$.
	Let $\Gamma_M$ be the deck transformation group of $\hat M \ra M$.  
	Suppose that $\Gamma_M$ is word-hyperbolic. 

Then there exists a constant $C >0$ so that
for	every pair of
	complete isometric geodesics $l_1$ and $l_2$ in $\hat M$
with the identical set of endpoints in $\partial_\infty \hat M$, the Hausdorff distance between 
$l_1$ and $l_2$ under $d$ is bounded above by $C$. 
\end{lemma}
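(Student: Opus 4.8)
The plan is to reduce the statement to the standard "stability of geodesics" (Morse lemma) in $\delta$-hyperbolic spaces, using that $\hat M$ is Gromov hyperbolic by the Svarc--Milnor Lemma (since $\Gamma_M$ is word-hyperbolic). First I would fix a $\delta$ so that $(\hat M, d_{\hat M})$ is $\delta$-hyperbolic in the sense of Rips; this is legitimate by the discussion in \S\ref{sub:Gromov}. The two complete isometric geodesics $l_1, l_2$ are, in particular, genuine geodesics in $(\hat M, d_{\hat M})$ in the metric sense (by the remark that for geodesic metric spaces isometric geodesics coincide with geodesics), so each is a $(1,0)$-quasigeodesic. They share the same unordered pair of endpoints $\{\xi_-, \xi_+\} \subset \partial_\infty \hat M$.

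The key step is the following: in a proper $\delta$-hyperbolic geodesic metric space, any two geodesics (more generally, any two quasigeodesics with fixed quasiconstants) that are asymptotic to the same pair of ideal endpoints lie within uniformly bounded Hausdorff distance of one another, where the bound depends only on $\delta$ (and the quasiconstants, here trivial). This is the bi-infinite version of the Morse stability lemma; see Chapter III.H of Bridson--Haefliger or the treatment in \cite{DK2018}. Concretely, I would argue as follows. Reparametrize $l_1$ and $l_2$ so that $l_i(t) \to \xi_+$ as $t \to +\infty$ and $l_i(t)\to\xi_-$ as $t \to -\infty$. For each large $T$, consider the geodesic quadrilateral with vertices $l_1(-T), l_1(T), l_2(T), l_2(-T)$, built from the segments $l_1|_{[-T,T]}$, $l_2|_{[-T,T]}$, and two connecting geodesics $[l_1(T), l_2(T)]$, $[l_1(-T), l_2(-T)]$. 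A quadrilateral is $2\delta$-thin (decompose it into two triangles by a diagonal). Hence every point of $l_1|_{[-T,T]}$ is within $2\delta$ of the union of $l_2|_{[-T,T]}$ and the two short connecting geodesics. The lengths of the connecting geodesics $[l_1(\pm T), l_2(\pm T)]$ stay bounded as $T \to \infty$: indeed since $l_1(\pm T)$ and $l_2(\pm T)$ both converge to the ideal point $\xi_{\pm}$, the Gromov product $(l_1(\pm T)\mid l_2(\pm T))_{o}$ tends to infinity, while $d_{\hat M}(l_1(\pm T), l_2(\pm T)) \le d_{\hat M}(l_1(\pm T), o) + d_{\hat M}(l_2(\pm T), o) - 2(l_1(\pm T)\mid l_2(\pm T))_o + 2\delta$ need not be bounded directly — so instead I would use that two geodesic rays in the same equivalence class stay within Hausdorff distance $\le 2\delta$ of each other (a standard consequence of thinness of ideal triangles, Lemma 11.? of \cite{DK2018}), applied to the forward rays $l_1|_{[0,\infty)}, l_2|_{[0,\infty)}$ after matching basepoints, and similarly to the backward rays. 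Combining the forward bound, the backward bound, and a uniform bound on the overlap region near the "turning points" gives the desired constant $C = C(\delta)$, which I'd take explicitly of the form $C = 10\delta + (\text{a diameter term controlled by }\delta)$.

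In fact the cleanest route, which I would write up, avoids the quadrilateral entirely: split each $l_i$ into a forward ray $r_i^+$ and a backward ray $r_i^-$ emanating from $l_i(0)$. Since $r_1^+$ and $r_2^+$ are asymptotic rays to $\xi_+$, a standard lemma in $\delta$-hyperbolic spaces (e.g. \cite{CDP90}, or Lemma 11.? of \cite{DK2018}) gives $d_{\mathrm{Haus}}(r_1^+, r_2^+) \le C_1(\delta)$; likewise $d_{\mathrm{Haus}}(r_1^-, r_2^-) \le C_1(\delta)$. Since $l_i = r_i^- \cup r_i^+$, we get $d_{\mathrm{Haus}}(l_1, l_2) \le C_1(\delta) =: C$, uniform over all such pairs. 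The main — and essentially only — obstacle is bookkeeping: the ideal-boundary point $\xi_\pm$ is an equivalence class, so I must be careful that "asymptotic rays stay uniformly close" is stated and used in the form where the bound depends only on $\delta$ and not on the particular rays or their basepoints; once that ingredient is in hand (it is available from the references cited in \S\ref{sub:Gromov}), the lemma follows immediately with $C$ depending only on the hyperbolicity constant of $(\hat M, d_{\hat M})$, which in turn depends only on $M$ and the chosen Riemannian metric.
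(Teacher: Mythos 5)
Your overall strategy coincides with the paper's: the lemma is exactly the bi-infinite Morse stability statement for geodesics sharing a pair of ideal endpoints, and the paper's entire proof is a one-line citation of Proposition 3.1 of Chapter 3 of \cite{CDP90}, applied to $l_1, l_2$ viewed as $(1,0)$-quasi-geodesics. If you had stopped after identifying that reduction and invoked that reference (or the corresponding statement in \cite{DK2018} or Bridson--Haefliger), your proof would match the paper's.

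However, the self-contained argument you sketch has a genuine gap in the route you call cleanest. The statement you rely on --- that two geodesic rays representing the same point of $\partial_\infty \hat M$ are within Hausdorff distance $C_1(\delta)$ --- is false as stated: take a ray $r$ and the sub-ray $r|_{[D,\infty)}$ reparametrized from $0$; these are asymptotic, yet their Hausdorff distance is $D$, which is arbitrary. The correct lemma bounds the Hausdorff distance in terms of $\delta$ \emph{and} the distance between basepoints, and in your setting $d(l_1(0), l_2(0))$ is not a priori controlled; bounding it is essentially the content of the lemma being proved, so the reduction is circular (your phrase ``after matching basepoints'' hides exactly this). Your quadrilateral argument is the right skeleton but is misfocused: the point is not whether the connecting geodesics $[l_1(\pm T), l_2(\pm T)]$ have length bounded by a function of $\delta$ (they need not), but whether they recede from a fixed point $p \in l_1$ as $T \to \infty$. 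They do, because $l_1(T)$ and $l_2(T)$ converge to the same ideal point, so the Gromov product $(l_1(T) \mid l_2(T))_p \to \infty$, and $d(p, [l_1(T), l_2(T)])$ differs from that Gromov product by $O(\delta)$; thinness of the quadrilateral then forces $d(p, l_2) \le 2\delta$ in the limit, and symmetry in $l_1, l_2$ finishes the proof. Either repair the argument along these lines or simply cite the stability result, as the paper does.
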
 
\begin{proof}
	Since $l_1$ and $l_2$ are both $(1, 0)$-quasi-geodesics, 
	this follows from Proposition 3.1 of Chapter 3 of \cite{CDP90}. 
\end{proof}

 	We put on $\hat M \cup \partial_\infty \hat M$ the 
 shadow topology, which is a first-countable Hausdorff space
 by Lemma 11.76 of \cite{DK2018}. It is also compact according to Section 11.11 of \cite{DK2018}. 
 Since it is first countable, 
 we do not need to consider nets on the space but only sequences
 to understand the continuity of the real-valued functions. 

We denote by $\partial_G X$ the Gromov boundary of 
a $\delta$-hyperbolic geodesic metric space $X$. (See Section 11.12 of \cite{DK2018}.)
 By Theorem 11.104 of \cite{DK2018}, 
 there is a homeomorphism $h: \partial_\infty X
 \ra \partial_G X$ given 
 by sending the equivalence class $[\rho]$ of a ray $\rho$
 to the equivalence class of $\{\rho(n)\}_{n \in \bZ_+}$. 
 We will identify these two spaces using this map. 

Let $\hat \Gamma_M$ denote the Cayley graph of $\Gamma_M$. 
The Gromov boundary $\partial_G \hat M$ of $\hat M$ and 
can be identified 
with the boundary of $\hat \Gamma_M$ with the word metric
by Theorem 11.108 of \cite{DK2018}. 

For any Gromov hyperbolic geodesic metric space $X$, 
the set of $GX$ of complete isometric geodesics has a metric: 
for $g, h \in GX$, we 
define the metric $d_{GX}$ given by 
\begin{equation}\label{eqn:dGX} 
 d_{GX}(g, h) := \int^\infty_{-\infty} d(g(t), h(t)) 2^{-|t|} dt.    
\end{equation}
(See Gromov \cite{Gromov87}.)
%
%
Let $GX$ have this metric topology. 
$GX$ is clearly locally closed since the limit of 
a sequence of isometries from $\bR$ is an isometry from $\bR$. 
(See Section 11.11 of \cite{DK2018}.)

\begin{lemma} \label{lem:identify} 
	Suppose that $\hat M$ is Gromov hyperbolic. 
	Then there is a quasi-isometric homeomorphism $\mathcal{F}:\Uc \hat M \ra G\hat M$
	by taking a unit vector $\vec{u}$ at $x$ to a complete isometric geodesic 
	$\bR \ra \hat M$ passing $x$ tangent to $\vec{u}$. 
	\end{lemma}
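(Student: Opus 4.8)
The plan is to exhibit $\mathcal{F}$ explicitly and check the two required properties — that it is a homeomorphism and that it is a quasi-isometry for the metrics $d_{\Uu\hat M}$ and $d_{G\hat M}$ — separately, since they are of quite different natures. The map itself is forced: a point of $\Uc\hat M$ is a pair $(x,\vec u)$ with $x$ lying on some complete isometric geodesic and $\vec u$ tangent to it; by definition of $\Uc\hat M$ there is at least one such geodesic $l$ with $l(0)=x$, $l'(0)=\vec u$, and I set $\mathcal{F}(x,\vec u)=l$. The first thing I would nail down is that $l$ is \emph{uniquely} determined by $(x,\vec u)$: two complete isometric geodesics through $x$ in direction $\vec u$ agree on a neighbourhood of $0$ by the uniqueness of Riemannian geodesics with given initial data, and then agree on all of $\bR$ because "isometric geodesic" is a closed condition and the set where they agree is open and closed. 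This makes $\mathcal{F}$ a well-defined bijection onto $G\hat M$, with inverse $l\mapsto(l(0),l'(0))$.

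Next I would verify continuity of $\mathcal{F}$ and of $\mathcal{F}^{-1}$. Continuity of $\mathcal{F}^{-1}$ is immediate: $d_{G\hat M}$-convergence $l_i\to l$ forces $l_i(0)\to l(0)$ and, via the integral formula \eqref{eqn:dGX} together with the fact that a limit of isometric geodesics is an isometric geodesic (Arzel\`a--Ascoli plus properness, as recalled in Section~\ref{sub:geoconv}), one gets convergence of $l_i$ to $l$ uniformly on compacta, hence $l_i'(0)\to l'(0)$ in $\Uu\hat M$. For continuity of $\mathcal{F}$ itself I would argue by sequential compactness: if $(x_i,\vec u_i)\to(x,\vec u)$ in $\Uc\hat M$ but $\mathcal{F}(x_i,\vec u_i)=l_i\not\to l=\mathcal{F}(x,\vec u)$, then by Arzel\`a--Ascoli a subsequence of the $l_i$ converges to some isometric geodesic $l'$ with $l'(0)=x$, $l'(0)'=\vec u$; uniqueness forces $l'=l$, a contradiction. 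Since $\Uc\hat M$ and $G\hat M$ are locally compact and first countable this sequential argument suffices, so $\mathcal{F}$ is a homeomorphism.

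The quasi-isometry claim is where the real content lies, and I expect it to be the main obstacle. Composing with the quasi-isometry $\pi_{\Uu\hat M}$ of Lemma~\ref{lem:piUUM} it is enough to compare $d_{\hat M}$ on $\hat M$ with $d_{G\hat M}$ transported via $\mathcal{F}$, i.e.\ to show $(x,\vec u)\mapsto \mathcal{F}(x,\vec u)$ is a quasi-isometry when the source carries the metric $d_{\hat M}\circ(\pi_{\Uu\hat M}\times\pi_{\Uu\hat M})$ up to bounded error. One inequality is soft: from \eqref{eqn:dGX}, $d_{G\hat M}(l_1,l_2)\le \int 2^{-|t|}(d(l_1(0),l_2(0))+2|t|)\,dt = 2\,d(l_1(0),l_2(0)) + C_0$ using the triangle inequality and that each $l_j$ is a unit-speed path, giving the upper (coarse Lipschitz) bound. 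The lower bound is the genuinely hyperbolic-geometry input: I would use $\delta$-thinness of $\hat M$ (valid since $\Gamma_M$ is word-hyperbolic, by the Svarc--Milnor lemma) together with Lemma~\ref{lem:twogeo} to show that two complete isometric geodesics whose $0$-points are far apart cannot be $d_{G\hat M}$-close — concretely, if $l_1,l_2$ have the same endpoint pair they are within Hausdorff distance $C$ of one another by Lemma~\ref{lem:twogeo}, and a fellow-travelling argument in a $\delta$-hyperbolic space shows that the parameter at which $l_2$ passes nearest to $l_1(0)$ is within a bounded distance of $0$, so that $d(l_1(0),l_2(0))$ is controlled by $\int d(l_1(t),l_2(t))2^{-|t|}dt$ up to additive and multiplicative constants; when the endpoint pairs differ the geodesics diverge, which only helps. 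Assembling the two inequalities and folding in Lemma~\ref{lem:piUUM} yields the quasi-isometry, completing the proof.
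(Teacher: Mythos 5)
Your proposal follows essentially the same route as the paper: the same explicit map, bijectivity from uniqueness of Riemannian geodesics with given initial data, continuity in both directions via the integral formula \eqref{eqn:dGX} together with Arzel\`a--Ascoli, and the quasi-isometry obtained by comparing $d_{G\hat M}$ with $d_{\hat M}$ at the $0$-points and folding in Lemma \ref{lem:piUUM}. The one substantive difference is that the paper outsources the quasi-isometry of $g \mapsto g(0)$ to Section III of \cite{Matheus90}, whereas you prove it by hand; your upper (coarse Lipschitz) bound is fine, up to the harmless constant $2/\ln 2$ in place of $2$. Your treatment of the lower bound, however, is both overcomplicated and contains a false intermediate claim. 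No hyperbolic geometry is needed: since $l_1, l_2$ are unit-speed, the triangle inequality gives $d(l_1(0),l_2(0)) \le d(l_1(t),l_2(t)) + 2|t|$ for every $t$, and integrating against $2^{-|t|}\,dt$ yields $d(l_1(0),l_2(0)) \le C_1\, d_{G\hat M}(l_1,l_2) + C_0$ directly, with no appeal to $\delta$-thinness or to Lemma \ref{lem:twogeo}. By contrast, your assertion that for two complete isometric geodesics with the same endpoint pair the parameter at which $l_2$ passes nearest to $l_1(0)$ is within bounded distance of $0$ is false as stated: take $l_2(t) = l_1(t+T)$ for large $T$. The shift is only controlled once you already know $d_{G\hat M}(l_1,l_2)$ is small, which is what that step is supposed to help establish. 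Replacing that paragraph by the one-line triangle-inequality estimate repairs the argument; everything else stands.
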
 
\begin{proof}
There is a map $G\hat{M} \to \Uc \hat{M}$ given by sending the complete isometric geodesic $g: \mathbb{R} \to \hat{M}$ to $(g(0), \vec{v}_0)$ where $\vec{v}_0$ is the unit tangent vector at $g(0)$. This map is clearly bijective. In Section III of \cite{Matheus90}, the map $G\hat{M} \to \hat{M}$ defined by $g \mapsto g(0)$ is shown to be a quasi-isometry. Therefore, post-composing this map with a lift that sends each $g(0)$ to a vector in the fiber of $\Uc \hat{M}$ over $g(0)$ results in a quasi-isometry. For a sequence $\{g_i\}$ of geodesics in $\hat{M}$, if $d_{GX}(g, g_i) \to 0$, then $(g_i(0), g_i'(0)) \to (g(0), g'(0))$ since otherwise, we would obtain a positive lower bound for the integral \eqref{eqn:dGX}.

The inverse map $\Uc \hat{M} \to G\hat{M}$ is also continuous due to the continuity of the exponential map and considerations involving \eqref{eqn:dGX}, where $d_X(g(t), h(t))$ grows sublinearly. The continuity of integral values under $g$ and $h$ can be shown by cutting off for $|t| > N$.

	
	\end{proof}

%


\subsection{Flow space $G\hat M$ quasi-isometric to $\partial_\infty^{(2)}\hat M \times \bR$}

We assume that $\Gamma_M$ is word-hyperbolic and 
denote by $F\Gamma_M$ the Gromov 
geodesic flow space obtained by the following proposition. 
For a group $\Gamma$, we let $\hat \Gamma$ denote its Cayley graph.
We denote $\partial_\infty^{(2)}\hat \Gamma:=\partial_\infty\hat \Gamma\times\partial_\infty\hat \Gamma - \{(t,t)|t\in\partial_\infty\hat \Gamma\}$. 

\begin{theorem}[Theorem 8.3.C of Gromov \cite{Gromov83}, Theorem 60 of Mineyev \cite{Mineyev}] 
	\label{thm:Mineyev} Let $\Gamma$ be a finitely generated word hyperbolic group. 

Then there exists a proper hyperbolic metric space 
	$F\Gamma$ with the following properties\/{\em :}
	\begin{itemize} 
		\item[(i)] $\Gamma\times\bR\times\bZ/2\bZ$ acts on $F\Gamma$.
		\item[(ii)] The $\Gamma\times \bZ/2\bZ$-action is isometric.
		\item[(iii)] Every orbit $\Gamma\ra F\Gamma$ is a quasi-isometry. 
		\item[(iv)] The $\bR$-action is free, and every orbit $\bR \ra F\Gamma$ is a quasi-isometric embedding. 
		The induced map $F\Gamma/\bR \ra \partial_\infty^{(2)}\hat \Gamma$ is a homeomorphism.
	\end{itemize}
\end{theorem}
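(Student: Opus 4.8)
The plan is to carry out Gromov's outline as in Mineyev \cite{Mineyev}. Since $\Gamma$ is word hyperbolic, its Cayley graph $\hat\Gamma$ for a finite generating set is a proper, geodesic, $\delta$-hyperbolic space on which $\Gamma$ acts geometrically; write $X$ for this space (one may equally take for $X$ a Rips complex $P_d(\Gamma)$, with the same boundary). The underlying set of $F\Gamma$ will be $\partial_\infty^{(2)} X\times\bR$, where the two boundary coordinates record the endpoints of a coarse bi-infinite geodesic and the $\bR$-coordinate records arc length along it; the actions are the evident ones, with $\gamma\in\Gamma$ acting diagonally on the first two coordinates and trivially on $\bR$, the nontrivial element of $\bZ/2\bZ$ acting by $(\xi,\eta,t)\mapsto(\eta,\xi,-t)$, and $s\in\bR$ acting by $(\xi,\eta,t)\mapsto(\xi,\eta,t+s)$.

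The heart of the matter is to put a $\Gamma\times\bZ/2\bZ$-invariant proper hyperbolic metric $d_{F\Gamma}$ on this set. I would build it from Mineyev's \emph{double difference} function $\langle\,\cdot\,,\cdot\,|\,\cdot\,,\cdot\,\rangle$ on $(\partial_\infty X)^4$, the continuous $\Gamma$-invariant extension to the boundary of the Gromov double difference on $X$, obtained via his $\Gamma$-equivariant (symmetric, homological) bicombing. From it one reads off (a) a visual quasi-metric on $\partial_\infty X$ that controls the transverse directions, and (b) a Busemann-type cocycle along each line $\{(\xi,\eta)\}\times\bR$ that pins down the $\bR$-parametrization so that $d_{F\Gamma}$ restricted to a line is bi-Lipschitz to the standard metric on $\bR$. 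One then defines $d_{F\Gamma}\big((\xi,\eta,s),(\xi',\eta',s')\big)$ by combining the boundary distance between $(\xi,\eta)$ and $(\xi',\eta')$ with $|s-s'|$ after aligning the two line-parametrizations by means of the double difference, and passes to the associated length metric. Invariance under $\Gamma\times\bZ/2\bZ$ comes from invariance and antisymmetry of the double difference; this gives (i) and (ii).

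For the rest I would first set up a coarsely well-defined comparison map $\pi\colon F\Gamma\to X$ sending $(\xi,\eta,t)$ to a point at coarse parameter $t$ on a bicombing geodesic from $\xi$ to $\eta$, and show that $d_{F\Gamma}$ is quasi-isometric to the $d_X$-pullback along $\pi$, using $\delta$-hyperbolicity of $X$ and stability of quasigeodesics (as in \cite{DK2018}). Properness of $F\Gamma$ then follows from compactness of $\partial_\infty X$ and properness of $X$, and hyperbolicity of $F\Gamma$ follows from that of $X$ by quasi-isometry invariance for geodesic spaces. For (iii): with a basepoint fixed, $\gamma\mapsto\gamma\cdot(\xi_0,\eta_0,0)$ followed by $\pi$ is within bounded distance of an orbit map $\Gamma\to X$, which is a quasi-isometry since $\Gamma$ acts geometrically on $X$; hence every $\Gamma$-orbit in $F\Gamma$ is a quasi-isometry. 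For (iv): the $\bR$-action is free because translating $t$ by $s\neq0$ moves a point by $\sim|s|>0$; each $\bR$-orbit is a line $\{(\xi,\eta)\}\times\bR$, a quasigeodesic by the comparison with $X$; and $F\Gamma/\bR\to\partial_\infty^{(2)}\hat\Gamma$ is the projection to the first two coordinates, a continuous bijection that is a homeomorphism because continuity of the double difference forces the metric topology on $\partial_\infty^{(2)} X\times\bR$ to be the product of the visual-metric topology and the standard topology on $\bR$.

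The main obstacle is the construction of the metric itself: one needs a single function that is at once a genuine metric, \emph{exactly} $\Gamma\times\bZ/2\bZ$-invariant, proper, hyperbolic, and has the $\bR$-lines as uniform quasigeodesics. The naive ansatz built directly from the Gromov product fails the triangle inequality, so Mineyev's symmetrization of the bicombing and the passage to a length metric are essential, and the verification of the thin-triangles condition for the resulting space is the most delicate estimate, resting on the fine quasigeodesic control of the bicombing. Once the metric and its quasi-isometry with $X$ are established, the group actions, properness, property (iii), and the homeomorphism in (iv) are comparatively routine.
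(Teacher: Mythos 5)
The paper offers no proof of this statement at all: it is imported verbatim as a citation to Gromov (8.3.C) and Mineyev (Theorem 60), so the only meaningful comparison is with the cited sources, and your sketch does follow Mineyev's actual strategy (double difference from the homological bicombing, visual quasi-metric on the boundary, length metric on $\partial^{(2)}_\infty X\times\bR$, comparison map to $X$ to get properness, hyperbolicity, and (iii)--(iv)). You also correctly identify where the real work lies, namely the construction of an exactly invariant metric.

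There is, however, one genuine error in the setup: you declare that $\gamma\in\Gamma$ acts ``diagonally on the first two coordinates and trivially on $\bR$.'' This cannot be right and is incompatible with the theorem's own conclusions. If the $\Gamma$-action fixed the $\bR$-coordinate, then a hyperbolic element $\gamma$ with fixed points $\xi^{\pm}_\gamma$ would preserve the line $\{(\xi^+_\gamma,\xi^-_\gamma)\}\times\bR$ pointwise, so the orbit of a point on that line under $\langle\gamma\rangle$ would be a single point; but (iii) forces orbit maps to be quasi-isometries, hence proper, so $\gamma^n p\to\infty$. The correct action is $\gamma\cdot(\xi,\eta,t)=(\gamma\xi,\gamma\eta,\,t+\beta(\gamma,\xi,\eta))$ for a translation cocycle $\beta$ (a Busemann-type quantity recording how $\gamma$ shifts the chosen origin of the line from $\eta$ to $\xi$), and on an axis this cocycle is comparable to the translation length. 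Moreover, replacing the naturally occurring \emph{quasi}-cocycle by an exact cocycle compatible with a genuinely $\Gamma$-invariant metric --- rather than one invariant only up to bounded error --- is precisely the point that was open after Gromov's outline and that Mineyev's double difference resolves; your sketch quietly assumes this away by making the $\bR$-part of the action trivial. With the cocycle reinstated, the rest of your outline (the $\bZ/2\bZ$-flip $(\xi,\eta,t)\mapsto(\eta,\xi,-t)$, commuting $\bR$-translation, properness, hyperbolicity via the comparison map, and the homeomorphism $F\Gamma/\bR\cong\partial^{(2)}_\infty\hat\Gamma$) is sound.
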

We shall say that $F \Gamma$ is the {\em flow space} of $\Gamma$. 
In fact, $F \Gamma$ is unique up to a $\Gamma \times \bZ/2\bZ$-equivariant quasi-isometry sending $\bR$-orbits to $\bR$-orbits. 
We shall denote by $\phi_t$ the $\bR$-action on $F \Gamma$ and by 
$(\tau_+, \tau_-):F\Gamma \ra F\Gamma/\bR \cong \partial_\infty^{(2)}\hat \Gamma$
the maps associating to an element of $F\Gamma$ 
the endpoint of its $\bR$-orbit.
Gromov identifies $F\Gamma$ with 
$\partial_\infty^{(2)} \Gamma \times \bR$ with a certain metric 
called the Gromov metric. 

\begin{proposition} \label{prop:Uc} 
	Let $M$ be a compact manifold with a covering map $\hat M \ra M$
	with a deck transformation group $\Gamma_M$. 
	Suppose that $\Gamma_M$ is word-hyperbolic.

	Then there is 
	a quasi-isometric surjective map 
	$\mathcal{E}: G\hat M \ra \partial^{(2)}_\infty \hat M \times \bR$ with compact fibers. 
	The map obtained from composing with 
	a projection to the first factor is a map given by taking the endpoints of complete isometric geodesics. 
\end{proposition}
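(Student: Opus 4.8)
The plan is to build the map $\mathcal{E}$ as a composition, routing everything through the flow space $F\Gamma_M$ of Theorem~\ref{thm:Mineyev}. First I would recall that $\hat M$ is Gromov hyperbolic (Svarc--Milnor, since $\Gamma_M$ is word-hyperbolic), so by Lemma~\ref{lem:identify} the set $G\hat M$ of complete isometric geodesics, with the metric $d_{G\hat M}$ from \eqref{eqn:dGX}, is quasi-isometric to $\Uc\hat M$; in particular $G\hat M$ is a geodesic-like metric space on which $\Gamma_M$ acts. The natural map $\mathrm{ends}: G\hat M \ra \partial^{(2)}_\infty \hat M$ sends a complete isometric geodesic $g$ to the ordered pair of its two endpoints $(g(+\infty), g(-\infty))$; this is well-defined (the endpoints exist by $\delta$-hyperbolicity) and, crucially, its fibers are exactly the sets of complete isometric geodesics sharing a given pair of endpoints. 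By Lemma~\ref{lem:twogeo} any two such geodesics lie within Hausdorff distance $C$ of each other, so a fiber has bounded $d_{G\hat M}$-diameter, and since $G\hat M$ is locally closed (limits of isometries are isometries) each fiber is in fact compact.

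Next I would produce the second coordinate, the $\bR$-parameter. Identifying $\partial^{(2)}_\infty \hat M$ with $\partial^{(2)}_\infty \hat\Gamma_M$ (via Theorem 11.108 of \cite{DK2018}, already invoked in the excerpt), Theorem~\ref{thm:Mineyev} gives the flow space $F\Gamma_M \cong \partial^{(2)}_\infty\hat\Gamma_M \times \bR$ with its $\bR$-action $\phi_t$ and endpoint map $(\tau_+,\tau_-)$. The idea is that a complete isometric geodesic $g$ in $\hat M$ tracks, up to bounded error, a quasi-geodesic orbit in $F\Gamma_M$, so one can transport $g$ to a point of $F\Gamma_M$ by choosing, say, the point of the $\phi$-orbit over $(g(+\infty),g(-\infty))$ closest to the image of $g(0)$ under a fixed $\Gamma_M$-equivariant quasi-isometry $\hat M \to F\Gamma_M$ (such a quasi-isometry exists because orbits of $\Gamma_M$ in both spaces are quasi-isometric to $\hat\Gamma_M$). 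This defines $\mathcal{E}: G\hat M \ra F\Gamma_M \cong \partial^{(2)}_\infty\hat M \times \bR$ whose composition with projection to the first factor is, up to the above identification, the endpoint map $\mathrm{ends}$, as required by the last sentence of the statement.

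It then remains to check the three asserted properties of $\mathcal{E}$. Surjectivity: every pair of distinct ideal points is joined by a complete isometric geodesic (bi-infinite geodesic in a proper Gromov hyperbolic geodesic space), so $\mathrm{ends}$ is onto $\partial^{(2)}_\infty\hat M$, and once the endpoints are fixed, sliding $g(0)$ along realizes all values of the $\bR$-parameter up to bounded error — a small surjectivity-up-to-coarse-adjustment argument, then one notes that ``quasi-isometric surjective'' allows bounded slack. Compact fibers: the fiber of $\mathcal{E}$ over $(\xi_+,\xi_-,t)$ sits inside the fiber of $\mathrm{ends}$ over $(\xi_+,\xi_-)$, which is compact by the previous paragraph, and it is closed, hence compact. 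Quasi-isometry: lower and upper distance bounds for $\mathcal{E}$ follow by combining (a) the quasi-isometry $G\hat M \simeq \Uc\hat M \simeq \hat M$ (Lemma~\ref{lem:identify} and the cited result of \cite{Matheus90}), (b) the equivariant quasi-isometry $\hat M \simeq \hat\Gamma_M$, and (c) the fact from Theorem~\ref{thm:Mineyev}(iii)--(iv) that $F\Gamma_M$ is quasi-isometric to $\hat\Gamma_M \times \bR$ with the parameter matching geodesic length; the coarse inverse is constructed by sending $(\xi_+,\xi_-,t)\in F\Gamma_M$ to (a bounded choice of) geodesic in $\hat M$ with those endpoints, reparametrized so that its $0$-point corresponds to $t$.

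The main obstacle I anticipate is the \emph{compatibility of the $\bR$-parametrizations}: one must verify that the ``time'' coordinate coming from Gromov's/Mineyev's flow space $F\Gamma_M$ matches the arclength parameter of complete isometric geodesics in $\hat M$ up to an additive constant (uniformly), so that $\mathcal{E}$ is genuinely coarsely bi-Lipschitz in the $\bR$-direction and not merely a continuous surjection. This is exactly the point where Theorem~\ref{thm:Mineyev}(iv) — that each $\bR$-orbit is a quasi-isometric embedding and $F\Gamma_M/\bR \cong \partial^{(2)}_\infty\hat\Gamma_M$ — together with $\delta$-hyperbolic stability of geodesics (Lemma~\ref{lem:twogeo} and the Morse lemma) must be used carefully; all the other steps are bookkeeping with quasi-isometries already established in the excerpt.
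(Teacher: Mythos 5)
Your overall strategy is viable and runs parallel to the paper's: both define $\mathcal{E}$ by recording the ordered endpoint pair plus a real ``time'' coordinate, get compactness of fibers from Lemma~\ref{lem:twogeo}, and reduce the quasi-isometry claim to the chain $G\hat M \simeq \hat M \simeq \hat \Gamma_M$ together with Morse-lemma stability of quasi-geodesics. The difference is in how the $\bR$-coordinate is produced. The paper does it intrinsically: fix a basepoint $x_0\in\hat M$, let $g_{x_0}$ be a nearest point of $g(\bR)$ to $x_0$, and set $t(g)=\pm d(g(0),g_{x_0})$; the target $\partial^{(2)}_\infty\hat M\times\bR$ is then compared with Champetier's explicit model $G\hat\Gamma_M$ (via the endpoint-matching maps $I,I'$) rather than with Mineyev's $F\Gamma_M$. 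This intrinsic choice buys two things your version loses. First, genuine surjectivity: as $g(0)$ slides along a fixed geodesic image, $t(g)$ sweeps out all of $\bR$ isometrically, whereas your ``closest point on the $\phi$-orbit to the image of $g(0)$'' assignment is only coarsely surjective and can jump, so your $\mathcal{E}$ as defined need not be onto and its point-fibers need not be closed; you flag this but dismiss it, while the proposition asserts literal surjectivity and the subsequent Theorem~\ref{thm:identify} needs each fiber $(t_1,t_2)\times\bR$ to be realized isometrically, which a closest-point construction in $F\Gamma_M$ cannot deliver (the $\bR$-orbits there are only quasi-isometrically embedded). Second, it avoids having to calibrate Mineyev's flow parameter against arclength at all, which is exactly the ``main obstacle'' you anticipate.

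There is also a factual slip in your item (c): Theorem~\ref{thm:Mineyev}(iii) says every orbit map $\Gamma_M\ra F\Gamma_M$ is a quasi-isometry, i.e.\ $F\Gamma_M$ is quasi-isometric to $\hat\Gamma_M$ itself, not to $\hat\Gamma_M\times\bR$ (for a surface group $F\Gamma$ is quasi-isometric to $\bH^2$, not to $\bH^2\times\bR$). What you actually need is that $\partial^{(2)}_\infty\hat M\times\bR$ carries the Gromov metric making it quasi-isometric to $\hat\Gamma_M$, so that the composite $G\hat M\ra\hat M\ra\hat\Gamma_M\ra F\Gamma_M$ is a quasi-isometry and $\mathcal{E}$ lies at bounded distance from it. With that correction, and with the time coordinate redefined as the signed distance from the basepoint projection so that surjectivity becomes literal, your argument closes essentially as the paper's does.
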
 
\begin{proof} 
	Choose a basepoint $x_0$ in $\hat M$. 
	For each complete isometric geodesic $g: \bR \ra \hat M$, 
	let $g_{x_0}$ denote the projection point on $g(\bR)$ that is of 
	the minimal distance from $x_0$
	and let $\partial_+ g, \partial_-g \in \partial_\infty \hat M$ denote
	the forward and backward endpoints,  
	and let $t(g) \in \bR$ denote $\pm d(g(0), g_{x_0})$ where we use $+$ 
	if $g(0)$ is ahead of $g_{x_0}$.  
	
	We define 
	$\mathcal{E}(g) = (\partial_+ g, \partial_- g, t(g))$. 
	The surjectivity follows from Proposition 2.1 of Chapter 2 of 
	\cite{CDP90} since $t$ is an isometry on each complete isometric  
	geodesic. 	The compactness of the fiber is implied by Lemma \ref{lem:twogeo}. 
	
	

	Champetier \cite{Champetier94} constructs the space 
	$G\hat \Gamma_M$ from $\hat \Gamma_M$ quasi-isometric to $\Gamma_M$ 
	following Gromov. 
Recall $\partial_\infty \hat{M} = \partial_\infty \hat{\Gamma}_M$. Using the orbit map $\Gamma_M \to \hat{M}$ sending $\gamma$ to $\gamma(x_0)$, we extend the map to $\hat{\Gamma}_M \to \hat{M}$ by sending each edge to a shortest geodesic.

Let $I: G\hat{\Gamma}_M \to G\hat{M}$ be a map that sends a geodesic $g$ in $\hat{\Gamma}_M$ to a geodesic $g'$ in $\hat{M}$ with the same endpoints in $\partial_\infty \hat{M}$, and maps $g(0)$ to the nearest point on the image of $g'$.

The map $I': G\hat{M} \to G\hat{\Gamma}_M$ can be defined by taking a geodesic $g$ in $\hat{M}$ to a geodesic $g'$ in $\hat{\Gamma}_M$ in a similar manner, with $g(0)$ going to one of the elements of $\Gamma_M(x_0)$ nearest to it.

By Theorem 11.72 of \cite{DK2018}, which states that every geodesic in $\hat{\Gamma}_M$ is a quasi-geodesic in $\hat{M}$ and vice versa, and considering that every geodesic in $\hat{M}$ is uniformly bounded away from one in $\hat{\Gamma}_M$ in the Hausdorff distance, it follows that $G\hat{M}$ is quasi-isometric to $G\hat{\Gamma}_M$ via the maps $I$ and $I'$, using \eqref{eqn:dGX}.

	
	We define 
$\mathcal{E}': G\hat \Gamma_M \ra \partial_\infty^{(2)} \hat \Gamma_M \times \bR$ 
	as we did for $\mathcal{E}$ above. 
	By Proposition 4.8 and (4.3) of \cite{Champetier94}, 
	$\mathcal{E}'$ sends 
	$G\hat \Gamma_M$ to 
	$\partial_\infty^{(2)} \hat \Gamma_M \times \bR
	= \partial_\infty^{(2)}\hat M\times \bR$ as a quasi-isometric homeomorphism. 
	Since $\mathcal{E}(g, t)$ and $\mathcal{E}'\circ I'(g, t)$ are uniformly bounded away from 
	each other,  the result follows. 
	%
	%
	%
\end{proof}


The following shows that $\Uc\hat M$ can be 
used as our Gromov flow space. 

\begin{theorem}\label{thm:identify} 
	The map	
	$\mathcal{E}\circ \mathcal{F}:\Uc \hat M \ra \partial^{(2)} M \times \bR$ 
	is a quasi-isometry where each complete isometric 
	geodesic in $\Uc \hat M$ goes to 
	$(t_1,t_2)\times \bR$ for its endpoint pair $(t_1, t_2)\in \partial^{(2)}_\infty \hat M$ as an isometry. 
\end{theorem}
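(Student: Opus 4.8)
The theorem is essentially a formal consequence of Lemma~\ref{lem:identify} and Proposition~\ref{prop:Uc}, so the plan is short. First I would recall that $\mathcal{F}:\Uc\hat M\to G\hat M$ is a quasi-isometry (in fact a quasi-isometric homeomorphism) by Lemma~\ref{lem:identify}, and that $\mathcal{E}:G\hat M\to\partial^{(2)}_\infty\hat M\times\bR$ is a quasi-isometry by Proposition~\ref{prop:Uc}. Since a composition of coarse Lipschitz maps is coarse Lipschitz, and since, if $\bar{\mathcal{F}},\bar{\mathcal{E}}$ are coarse Lipschitz coarse inverses of $\mathcal{F},\mathcal{E}$, then $\bar{\mathcal{F}}\circ\bar{\mathcal{E}}$ is a coarse Lipschitz coarse inverse of $\mathcal{E}\circ\mathcal{F}$, the composite $\mathcal{E}\circ\mathcal{F}$ is a quasi-isometry; its fibers are compact because those of $\mathcal{E}$ are and $\mathcal{F}$ is a homeomorphism. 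The identification $\partial^{(2)}_\infty\hat M=\partial^{(2)}M$ used in the statement is the one set up in Section~\ref{sub:Gromov} via $\partial_\infty\hat M\cong\partial_G\hat\Gamma_M$.

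Next I would verify the assertion on a single complete isometric geodesic, which is the only point requiring any computation. The key observation is that $\mathcal{F}$ intertwines the geodesic flow $\phi_t$ on $\Uc\hat M$ with the $\bR$-action (precomposition by translation) on $G\hat M$: by the very definition of $\mathcal{F}$, a unit vector $(g(t),\dot g(t))$ along a geodesic $g$ is sent to the geodesic $s\mapsto g(s+t)$, which is the flow translate of $\mathcal{F}(g(0),\dot g(0))$. Hence, writing $\tilde g(t)=(g(t),\dot g(t))$ for the canonical lift of a complete isometric geodesic $g:\bR\to\hat M$, the curve $\mathcal{F}\circ\tilde g$ is exactly the $\bR$-orbit of $g_0:=\mathcal{F}(\tilde g(0))$ in $G\hat M$, parametrized by the flow time. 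Now I apply $\mathcal{E}$ using its definition from the proof of Proposition~\ref{prop:Uc}: the endpoints $\partial_+,\partial_-$ of $\phi_t(g_0)$ do not depend on $t$ and equal the endpoint pair $(t_1,t_2):=(\partial_+ g,\partial_- g)$; the nearest-point projection of the basepoint $x_0$ onto the (common) image is one fixed point $g_0(t_0)$; and, because $g_0$ is an isometry of $\bR$, the last coordinate is $t(\phi_t(g_0))=\pm d(g_0(t),g_0(t_0))=t-t_0$, with the sign as prescribed. Therefore $\mathcal{E}\circ\mathcal{F}\circ\tilde g$ is the map $t\mapsto(t_1,t_2,t-t_0)$, which is an isometry of $\bR$ onto $\{(t_1,t_2)\}\times\bR$, as claimed.

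I do not expect a genuine obstacle here, since the heavy lifting---that $\mathcal{F}$ and $\mathcal{E}$ are quasi-isometries---is already done in Lemma~\ref{lem:identify} and Proposition~\ref{prop:Uc}. The only thing needing care is the bookkeeping in the last paragraph: making sure that the canonical lift $\tilde g$ is what is meant by a complete isometric geodesic in $\Uc\hat M$, that it is traversed at unit speed for the relevant metric, and that the sign conventions and basepoint choices in the definition of $\mathcal{E}$ align so that one gets an honest isometry on each orbit rather than merely a quasi-isometric one. This is a matter of unwinding definitions rather than of estimates.
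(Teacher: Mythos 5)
Your proposal is correct and follows the paper's own route exactly: the paper's proof is the one-line observation that Proposition~\ref{prop:Uc} and Lemma~\ref{lem:identify} together imply the result, which is precisely your composition argument. Your additional unwinding of the definitions to check that each geodesic maps isometrically onto $\{(t_1,t_2)\}\times\bR$ just makes explicit what the paper leaves implicit.
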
 
\begin{proof} 
	Proposition \ref{prop:Uc} and Lemma \ref{lem:identify} imply
	the result. 
\end{proof}

\section{Partial hyperbolicity and $P$-Anosov properties} \label{sec:phyp} 


\red{The purpose of this section is to set notations and recall standard facts since some of these are not too well-known to many geometric topologists not in this specific area.}

\subsection{Some background} \label{sub:background} 
We have to repeat some of the standard materials from
Section 2.2 of \cite{GGKW17ii} and Chapter 2 of \cite{GJT}. 
Let us consider a real reductive Lie group $G$ with a maximal compact subgroup $K$. 

We assume that $\mathrm{Ad}(G) \subset \Aut(\mathfrak{g})$ for simplicity. 
For example, this holds if $G$ is connected. We can usually reduce everything here to 
the case when $G$ is connected. 
Here, $G$ is an almost product of $Z(G)_0$ and $G_s$ 
where $Z(G)_0$ is the identity component of the center $Z(G)$ of $G$ 
and $G_s = D(G)$ the derived subgroup of $G$, which is semisimple. 

A parabolic subgroup $P$ of $G$ is a subgroup $P$ of the form 
$G \cap \mathbf{P}(\bR)$ for some algebraic subgroup $\mathbf{P}$ of an algebraic group $\mathbf{G}$ 
where $\mathbf{G}(\bR)/\mathbf{P}(\bR)$ is compact
and $\mathbf{G}(\bR) = G$. 

Let $\mathfrak{z}(\mathfrak{g})$ denote the Lie algebra of $Z(G)$, 
and $\mathfrak{g}_s$ the Lie algebra of $G_s$. 
Then $\mathfrak{g} = \mathfrak{z}(\mathfrak{g}) \oplus \mathfrak{g}_s$.
Letting $\mathfrak{t}$ to denote the Lie algebra of $K$, 
we have $\mathfrak{g} = \mathfrak{t} \oplus \mathfrak{q}$ 
where $\mathfrak{q}$ is the $-1$-eigenspace of an involution $\theta$ preserving 
$\mathfrak{t}$ as the $1$-eigenspace.  
Let $\mathfrak{a} \subset \mathfrak{q}$ be the maximal abelian subalgebra, i.e., the Cartan subalgebra of 
$\mathfrak{g}$. Then $\mathfrak{a} = \mathfrak{z}(\mathfrak{g}) \cap \mathfrak{q} \oplus \mathfrak{a}_s$ 
where $\mathfrak{a}_s = \mathfrak{a} \cap \mathfrak{g}_s$, a maximal abelian subalgebra of 
$\mathfrak{q} \cap \mathfrak{g}_s$. 

We have a decomposition to $ad(\mathfrak{a})$-eigenspaces
 $\mathfrak{g}=\mathfrak{g}_{0} \oplus \bigoplus_{\alpha \in \Sigma} \mathfrak{g}_{\alpha}$.
 Here, 
 $\mathfrak{g}_{0}$ is the direct sum of $z(\mathfrak{g})$ and of the centralizer of $\mathfrak{a}$ in $\mathfrak{g}_{s} $. 
 The set $\Sigma \subset \mathfrak{a}^{*}= \mathrm{Hom}_{\mathbb{R}}(\mathfrak{a}, \mathbb{R})$ projects to a (possibly nonreduced) root system of $\mathfrak{a}_{s}^{*},$ and each $\alpha \in \Sigma$ is called a restricted root of $\mathfrak{a}$ in $\mathfrak{g}$.
 There is a subset 
$\Delta \subset \Sigma$ called a {\em simple system},  i.e. a subset such that any root is expressed 
uniquely as a linear combination of elements of $\Delta$ with coefficients all of the same sign; 
the elements of $\Delta$ are called the simple roots.
There is a subset
$\Sigma^{+} \subset \Sigma$ which is the set of positive roots, i.e. 
 roots that are nonnegative linear combinations of elements of $\Delta$, where 
 $\Sigma=\Sigma^{+} \cup\left(-\Sigma^{+}\right)$ holds additionally. 

We define $P_\theta$ to be the parabolic subgroup with Lie algebra
\[\operatorname{Lie}\left(P_{\theta}\right)=\mathfrak{g}_{0} \oplus \bigoplus_{\alpha \in \Sigma^{+}} \mathfrak{g}_{\alpha} \oplus \bigoplus_{\alpha \in \Sigma^{+} \backslash \Sigma_{\theta}^{+}} \mathfrak{g}_{-\alpha}.\]

The connected components of $\mathfrak{a} \setminus \cup_{\alpha \in \Sigma} \ker(\alpha)$ 
are called {\em Weyl chambers} of $\mathfrak{a}$. 
The component where every $\alpha \in \Sigma_+$ is positive is denoted by $\mathfrak{a}^+$.

For $G = \SL_\pm(n, \bR)$. 
The maximal abelian subalgebra 
$A_n$ of $\mathfrak{p}$ in this case
is the subspace of diagonal matrices with 
entries $a_1, \dots, a_n$ where $a_1 + \dots + a_n = 0$. 
Let $\log \lambda_i: A_n\ra \bR$ denote the projection to the $i$-th factor. 
$\Sigma$ consists of $\alpha_{ij} :=\log \lambda_i -\log \lambda_j$ for $i\ne j$, 
and $\mathfrak{sl}(n, \bR)_{\alpha_{ij}} = \{c E_{ij}| c\in \bR\}$ where $E_{ij}$ is a matrix with entry $1$ at $(i, j)$ and zero entries everywhere else. 

A positive Weyl chamber is given by
\[A_n^{+} =\{(a_1, \dots, a_n)| a_1\geq \dots \geq a_n, a_1+\cdots + a_n = 0\},\]
and we are given 
\begin{gather*} 
\Sigma^+ = \{\log \lambda_i -\log \lambda_j | i < j\} \hbox{ and }\\ 
\Delta = \{\alpha_i:= \log \lambda_i -\log \lambda_{i+1} | i=1, \dots, n-1\}.
\end{gather*}  

We recall Example 2.14 of \cite{GGKW17ii}: 
Let $G$ be $\mathrm{GL}(n, \bR)$ seen as a real Lie group. Its derived group is $G_{s}=D(G)=\mathrm{SL}(n, \bR) .$ 
We can take 
$\mathfrak{a} \subset \mathfrak{g l}(n, \bR)$ to 
be the set of real diagonal matrices of size $n \times n$. 
As above, for $1 \leq i \leq n,$ let $\log \lambda_{i} \in \mathfrak{a}^{*}$ 
be the evaluation of the $i$-th diagonal entry. 
Then 
\[\mathfrak{a}=\mathfrak{z}(\mathfrak{g}) \cap \mathfrak{a} \oplus \mathfrak{a}_{s} 
\hbox{ where }
\mathfrak{z}(\mathfrak{g}) \cap \mathfrak{a}=\bigcap_{1 \leq i, j \leq n}  \operatorname{Ker}\left(\log \lambda_{i}-\log \lambda_{j}\right)\] 
is the set of real scalar matrices, and $\mathfrak{a}_{s}=
\operatorname{Ker}\left(\log \lambda_{1}+\cdots+\log \lambda_{n}\right)$ 
is the set of traceless real diagonal matrices. The set of restricted roots of $\mathfrak{a}$ in $G$ is
$$
\Sigma=\left\{\log \lambda_{i}-\log \lambda_{j} \mid 1 \leq i \neq j \leq n\right\}.
$$
We can take $\Delta=\left\{\log \lambda_{i}-\log \lambda_{i+1} \mid 1 \leq i \leq n-1\right\}$ so that
$$
\Sigma^{+}=\left\{\log \lambda_{i}-\log \lambda_{j} \mid 1 \leq i<j \leq n\right\},
$$
and $\mathfrak{a}^{+}$ is the set of the elements of $\mathfrak{a}$ whose entries are in increasing order. 
Let $\clo({\mathfrak{a}}^+)$ denote its closure. 
For 
\begin{multline} 
\theta=\left\{\log \lambda_{i_{1}}-\log \lambda_{i_{1}+1}, \ldots, \log \lambda_{i_{m}}-\log \lambda_{i_{m}+1}\right\} \\ 
\hbox{ with } 1 \leq i_{1}<\cdots<i_{m} \leq n-1,
\end{multline} 
the parabolic
subgroup $P_{\theta}$ (resp. $\left.P_{\theta}^{-}\right)$ is the group of block upper (
resp. lower) triangular matrices in $\mathrm{GL}(n,\bR)$ with square diagonal blocks 
of sizes $i_{1}, i_{2}-i_{1}, \ldots, i_{m}-i_{m-1}, n-i_{m}$. 
In particular, $P_{\Delta}$ is the group of upper triangular matrices in $\mathrm{GL}(n,(\bR)$.

From now on, we will fix $\mathfrak{a}_n \subset \GL(n, \bR)$ 
to be the set of real diagonal matrices of size $n \times n$. 
We will also regard $\theta$ as a subset of $\{1, \dots, n-1\}$ where 
each $i$ corresponds to $\log \lambda_i - \log \lambda_{i+1}$. 
Also, let $\mathfrak{a}_n^+ $ denote the open positive Weyl chamber.

\subsection{$P$-Anosov representations}  \label{sub:Panosov} 
Let $(P^{+}, P^{-})$ be a pair of opposite parabolic subgroups of $\GL(n, \bR)$, and 
let $\mathcal{F}^{{\pm}}$ denote the flag spaces. 
Let $\chi$ denote the unique open $\GL(n, \bR)$-orbit of the product 
$\mathcal{F}^{+}\times \mathcal{F}^{-}$. 
The product subspace $\chi$ has two $\GL(n, \bR)$-invariant 
distributions $E^{\pm} := T_{x_{\pm}} \mathcal{F}^{\pm}$ for $(x^{+}, x^{-}) \in \chi$. 
Let $\phi_t$ be given as in Theorem \ref{thm:Mineyev} as the $\bR$-action on 
$F \Gamma_M$  whose orbits are quasi-geodesics. 
We denote by $(\tau_+, \tau_-): F\Gamma_M \ra F\Gamma_M/\bR \cong \delta_\infty \hat \Gamma_M^{(2)}$ the maps associating to a point the end points of its $\bR$-orbit.


\begin{definition}[Definition 2.10 of \cite{GW12}]\label{defn:Anosov} 
	A representation $\rho: \Gamma_M \ra \GL(n, \bR)$ is {\em $(P^{+}, P^{-})$-Anosov} if 
	there exist continuous $\rho$-equivariant maps $\xi^+: \partial_\infty \hat \Gamma_M \ra {\mathcal{F}}^+$, 
	$\xi^-:\partial_\infty \hat \Gamma_M \ra {\mathcal{F}}^-$ such that:  
	\begin{enumerate} 
		\item[(i)] for all $(x, y)\in \partial_\infty^{(2)} \hat\Gamma_M$, the pair 
		$(\xi^+(x), \xi^-(y))$ is transverse, and  
		\item[(ii)] for one (and hence any) continuous and equivariant family of norms 
		$(\llrrV{\cdot}_{m})_{m\in F\Gamma_M} $ on 
		\[(T_{\xi^+(\tau^+(m))} {\mathcal{F}}^+)_{m\in F\Gamma_M}
		\Big(\hbox{resp. } 
		(T_{\xi^-(\tau^-(m))} {\mathcal{F}}^-)_{m\in F\Gamma_M}\Big):\]
		\[ \llrrV{e}_{\phi_{-t}m} \leq A e^{-at} \llrrV{e}_{m}  \Big(\hbox{resp.} 
		\llrrV{e}_{\phi_{t}m} \leq A e^{-at} \llrrV{e}_{m} \Big).
		   \]
	\end{enumerate}
We call 	$\xi^\pm:\partial_\infty F\Gamma_M \ra {\mathcal{F}}^\pm$ the {\em Anosov maps} associated with 
$\rho: \Gamma_M \ra G$. 
	(See also Section 2.5 of  \cite{GGKW17ii} for more details.)
\end{definition}

It is sufficient to consider only the case where $P^{+}$ is conjugate to $P^{-}$
by Lemma 3.18 of \cite{GW12}. (See also Definition 5.62 of \cite{KLP17}). 
In this case, $\rho$ is called {\em $P$-Anosov} for $P = P^+$. 
Note that this means that 
\begin{equation}  \label{eqn:theta*} 
\theta = \theta^*
\end{equation} 
where $\theta^*$ is the image $\theta$ under the map $i \mapsto n-i$, $i=1, \dots, n-1$.
Hence, this will always be true for the set $\theta$ occurring for $P_\theta$-Anosov 
representations below.
(See the paragraph before Fact 2.34 of \cite{GGKW17ii}.)

\subsection{$P$-Anosov representations and dominations}



Let $\rho: \Gamma_M \ra \GL(n, \bR)$ denote a representation. 
We order singular values 
\[a_1(g) \geq a_2(g) \geq \dots \geq a_n(g)
\hbox{ of } \rho(g) \hbox{ for } g \in \rho(\Gamma_M).\] 
%
%
%
%
%
%
%
%
%
%
%
%
%
Let $w(g)$ denote the word length of $g$. 
Suppose that there exists an integer $k$,
$1 \leq k \leq n-k+1 \leq n$, 
so that the following hold for a constant $A, C > 0$: 
\begin{itemize}  
	\item $a_k(\rho(g))/a_{k+1}(g) \geq C\exp(A w(g))$, and
	\item $a_{n-k}(\rho(g))/a_{n-k+1}(g) \geq C\exp(A w(g))$.
\end{itemize} 
In this case, we say that $\rho$ is {\em $k$-dominated}
for $k \leq n/2$ 
(see Bochi-Potrie-Sambarino \cite{BPS}).





%
%


We will use $\llrrV{\cdot}$ to indicate the Euclidean norm in the maximal flat 
in a symmetric space $X$. (See Example 2.12 of \cite{KL18}.)
We will use the following notation: 
\[\vec{a}(g) :=(a_1(g), \dots, a_n(g)), \quad
\log \vec{\lambda}(g) := (\log \lambda_1(g), \dots, \log \lambda_n(g)) \in \clo(\mathfrak{a}^+_n)\] 
for the singular values $a_i(g)$ and 
the modulus $\lambda_i(g)$ of the eigenvalues of $g$
for $g \in \GL(n, \bR)$.
%

Given a set $A$, 
we say that two functions $f, g:A \ra \bR$ are {\em compatible} if 
there exists a uniform constant $C>1$ so that 
$C^{-1} (x) < f(x) < C g(x)$ for all $x\in A$. 

\begin{lemma}\label{lem:Equiv} 
	Let a finitely generated discrete affine transformation group $G$ have  linear holonomy representation $\rho:G \ra \GL(n, \bR)$.

	Then the following are equivalent\/{\em :} 
	\begin{itemize} 
	\item  $\rho$ is $P$-Anosov in the Gu\'eritaud-Guichard-Kassel-Wienhard sense \cite{GGKW17ii}
	for some parabolic group $P = P_\theta$ for an index set $\theta$ containing  $k$
	and $n-k$. 
	\item  $\rho$ is a $k$-dominated representation 
	for $1 \leq k \leq n/2$. 
	\end{itemize} 
\end{lemma}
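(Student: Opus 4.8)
The plan is to prove \lemref{lem:Equiv} by unwinding both conditions into statements about singular-value gaps of $\rho(\gamma)$ along quasi-geodesics and invoking the characterization of Anosov representations due to Bochi--Potrie--Sambarino \cite{BPS} and Gu\'eritaud--Guichard--Kassel--Wienhard \cite{GGKW17ii}. First I would recall the singular-value characterization of $P_\theta$-Anosov: a representation $\rho:\Gamma_M\to\GL(n,\bR)$ with $\Gamma_M$ word-hyperbolic is $P_\theta$-Anosov if and only if for each simple root index $i\in\theta$ there are constants $A,C>0$ with $\log a_i(\rho(\gamma))-\log a_{i+1}(\rho(\gamma))\geq A\,w(\gamma)-C$ for all $\gamma\in\Gamma_M$ (Theorem 1.3 and Theorem 1.7 of \cite{BPS}, or Theorem 1.3 / Section 3.2 of \cite{GGKW17ii}). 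Since by \eqref{eqn:theta*} any $\theta$ occurring for a $P_\theta$-Anosov representation satisfies $\theta=\theta^*$, the index $k$ lies in $\theta$ if and only if $n-k$ does; so the condition ``$\theta$ contains both $k$ and $n-k$'' is equivalent to ``$\theta$ contains $k$'' together with symmetry.

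The forward direction is then nearly immediate: if $\rho$ is $P_\theta$-Anosov for some $\theta\ni k$, then in particular the singular-value gap estimate holds for the index $k$, i.e. $a_k(\rho(\gamma))/a_{k+1}(\rho(\gamma))\geq C\exp(A w(\gamma))$, and by the symmetry $\theta=\theta^*$ it also holds for $n-k$. These are exactly the two inequalities in the definition of $k$-dominated, so $\rho$ is $k$-dominated; and $k\leq n/2$ is built into the hypothesis $1\leq k\leq n-k+1\leq n$. For the converse, suppose $\rho$ is $k$-dominated. The gap at index $k$ gives, a fortiori, a uniform exponential gap between $a_k$ and $a_{k+1}$; one then observes that a gap at index $k$ forces (via log-concavity of partial sums of the sorted singular-value vector, i.e. the standard interpolation argument in \cite[\S3]{BPS}) a gap at every index $i$ with $k\leq i$ only if one has gaps on both ends — so strictly one should take $\theta=\{k,n-k\}$ as the witnessing set. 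By the Bochi--Potrie--Sambarino criterion, the uniform gaps at $k$ and at $n-k$ (which is $=(n-k)$, and by the symmetry of singular values under $g\mapsto (g^{-1})^T$ the gap at $n-k$ is equivalent to a gap at $k$ for the dual representation) imply that $\rho$ is $P_{\{k,n-k\}}$-Anosov, hence $P_\theta$-Anosov for $\theta=\{k,n-k\}\ni k,\,n-k$. Finally, I would note that the criterion of \cite{BPS} requires $\Gamma_M$ to be word-hyperbolic; this is automatic here because a $k$-dominated representation is itself a quasi-isometric embedding onto its orbit, so $\Gamma_M$ embeds quasi-isometrically in a symmetric space and is hyperbolic --- alternatively this is part of the statement of \cite[Thm.~1.3]{BPS} / \cite[Thm.~1.1]{GGKW17ii}.

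The main obstacle, and the step that needs care rather than routine bookkeeping, is establishing that the word-length $w(g)$ used in the definition of $k$-dominated can be matched (up to multiplicative and additive constants) with the geodesic-flow time parameter $t$ appearing in the Guichard--Wienhard definition \ref{defn:Anosov}, via the flow space of Theorem \ref{thm:Mineyev}: this is precisely the content of property (iii) there (every orbit $\Gamma\to F\Gamma$ is a quasi-isometry) combined with the Morse lemma, but one must check that the contraction-along-the-flow formulation of \ref{defn:Anosov}(ii) is equivalent to the uniform-gap-along-the-group formulation, which is exactly Proposition 2.5 / Section 2.5 of \cite{GGKW17ii} (or \cite[Thm.~A]{BPS}). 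I would cite that equivalence rather than reprove it. The remaining ingredient — that transversality of the limit maps $\xi^\pm$ and their continuity follows automatically once the uniform gaps hold — is also taken from \cite{BPS, GGKW17ii}, so the proof of \lemref{lem:Equiv} is essentially an assembly of these cited characterizations together with the symmetry observation \eqref{eqn:theta*}.
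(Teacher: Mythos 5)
Your proposal is correct and takes essentially the same route as the paper: the paper's proof is a one-line citation of the singular-value characterization (Theorem 1.3 of \cite{GGKW17ii}), observing that $\theta\ni k,\,n-k$ makes the $P_\theta$-Anosov gap conditions at indices $k$ and $n-k$ coincide with the two inequalities defining $k$-domination. Your extra digression about log-concavity of partial sums is unnecessary (the definition of $k$-dominated already supplies both gaps directly), but it does not affect the correctness of the argument.
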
 
\begin{proof} 
	This follows by Theorem 1.3(3) of \cite{GGKW17ii}
	since the set $\theta$ contains $k$ and $n-k$.  
\end{proof}

\section{The proof of Theorem  \ref{thm:main2}} \label{sec:PAnosov}

\subsection{$k$-convexity} \label{sub:kconvexity} 
\red{We will relate the partial hyperbolic property in the singular value sense to that in the bundle sense
	following \cite{BPS}.  We need this modification since we have to have actual expanding and shrinking under the flow.}

We continue to assume that $\Gamma_M$ is hyperbolic. 
Let $\rho:\Gamma_M \ra \GL(n, \bR)$ be a representation. 
We say that the representation $\rho$ is {\em  $k$-convex} if there exist continuous maps 
$\zeta: \partial_\infty \hat \Gamma_M \ra \mathcal{G}_k(\bR^n)$ and $\theta:\partial_\infty \hat \Gamma_M\ra \mathcal{G}_{n-k}(\bR^n)$ such that:
\begin{description} 
	\item[(transversality)] for every $x,y\in\partial_\infty \hat \Gamma_M,x\ne y$, we have $\zeta(x)\oplus\theta(y)=\bR^n$, and
	\item[(equivariance)] for every $\gamma \in \Gamma_M$, 
	we have \[\zeta(\gamma x) = \rho(\gamma)\zeta(x), \theta(\gamma x) = \rho(\gamma)\theta(x), x \in \partial_\infty \hat \Gamma_M.\]
\end{description}


Using the representation $\rho$, it is possible to construct a linear 
flow $\psi_t$ over the geodesic flow $\phi_t$ of 
the Gromov flow space $\partial^{(2)}_\infty \hat M \times \bR$ as follows. 
Consider the lifted geodesic flow $\hat \phi_t$ on 
$\partial^{(2)}_\infty \hat M \times \bR$, and
define a linear flow on $\hat E := 
(\partial^{(2)}_\infty \hat M \times \bR)\times\bR^n$ by:
$\hat\psi_t(x,\vec{v})=(\phi_t(x), \vec{v})$ for $x\in\partial^{(2)}_\infty \hat M \times \bR$.
Now consider the action of $\Gamma_M$ on $\hat E$ given by:
\[ \gamma \cdot \left(x, \vec{v}\right):= 
\left(\gamma(x), \rho(\gamma)(\vec{v})\right). \] 
It follows that $\hat \psi_t$ induces in a flow on $E:=\bR^n_{\rho}= \hat E/\Gamma_M$. 

When the representation $\rho$ is $k$-convex, by the equivariance, there exists a $\phi_t$-invariant splitting of the form $E_\rho = Z \oplus \Theta$; 
it is obtained by taking the quotient of the bundles
\[\hat Z(x) = \zeta(x_+) \hbox{ and }
\hat \Theta(x) = \theta(x_-)
\hbox{ for } x \in \partial^{(2)}_\infty \hat M \times \bR\]
where $x_+$ is the forward endpoint of 
the complete isometric geodesic through $x$ and 
$x_-$ is the backward endpoint of the complete isometric geodesic through $x$. 

We say that a $k$-convex representation is {\em $k$-Anosov
in the bundle sense} if the splitting 
$E_\rho=Z\oplus\Theta$ is a dominated splitting for the linear bundle automorphism $\psi_t$, with $Z$ dominating $\Theta$ in the terminology of \cite{BPS}. 
This is equivalent to the fact that the bundle $\Hom(Z,\Theta)$ is uniformly contracted
by the flow induced by $\psi_t$.

Suppose that we further require for a $k$-dominated representation $\rho$
 for $k \leq n/2$\/: 
\begin{itemize}  
	\item $a_p(\rho(g)) \geq C^{-1} \exp(A w(g))$ for $p \leq k$, and 
	\item $a_r(\rho(g)) \leq C \exp(-A w(g))$ for $r \geq n-k+1$
	for some constants $C> 1, A> 0$. 
\end{itemize} 
Then we say that $\rho$ is {\em partially hyperbolic
	in the singular value sense with index $k$}. 
 
We obtain by Lemma \ref{lem:Equiv}:
\begin{lemma}\label{lem:phypAnos} 
	If $\rho$ is partially hyperbolic for an index $k$, $k \leq n/2$, in the singular value sense, 
	then $\rho$ is $P_\theta$-Anosov for the index set $\theta$ containing $k$ and $n-k$. 
	\qed
\end{lemma}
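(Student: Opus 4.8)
The plan is to observe that the statement is essentially definitional once Lemma~\ref{lem:Equiv} is in hand. First I would unwind the hypothesis: by the definition just given, a representation $\rho$ that is partially hyperbolic in the singular value sense with index $k$ is by construction a $k$-dominated representation — the two singular-value gap conditions $a_k(\rho(g))/a_{k+1}(\rho(g)) \geq C\exp(A w(g))$ and $a_{n-k}(\rho(g))/a_{n-k+1}(\rho(g)) \geq C\exp(A w(g))$ are already part of the definition of $k$-domination, while the supplementary growth and decay bounds on the top $k$ and bottom $k$ singular values are extra data that are not needed for the present conclusion.

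Then I would simply invoke Lemma~\ref{lem:Equiv}, which asserts that for a finitely generated discrete affine transformation group with linear holonomy $\rho:G\ra\GL(n,\bR)$, being $k$-dominated for $1 \leq k \leq n/2$ is equivalent to being $P_\theta$-Anosov in the G\'ueritaud--Guichard--Kassel--Wienhard sense for some (hence any) parabolic $P = P_\theta$ whose index set $\theta$ contains both $k$ and $n-k$. Since here $k \leq n/2$ by hypothesis, the lemma applies verbatim and gives the desired conclusion.

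There is no substantive obstacle: the analytic content has been pushed into Lemma~\ref{lem:Equiv}, which in turn rests on Theorem 1.3(3) of \cite{GGKW17ii}. The only point requiring a moment's care is the bookkeeping of the index set — one must check that ``$\theta$ contains $k$ and $n-k$'' is precisely the hypothesis feeding into Lemma~\ref{lem:Equiv}, and that this is consistent with the symmetry constraint $\theta = \theta^*$ of \eqref{eqn:theta*} that any $P$-Anosov datum must satisfy; this is automatic, since $\{k,\,n-k\}$ is stable under the involution $i \mapsto n-i$.
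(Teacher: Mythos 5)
Your proposal is correct and matches the paper exactly: the paper introduces the lemma with the phrase ``We obtain by Lemma~\ref{lem:Equiv}'' and gives no further argument, precisely because partial hyperbolicity in the singular value sense is defined as $k$-domination plus extra growth bounds, and Lemma~\ref{lem:Equiv} converts $k$-domination into the $P_\theta$-Anosov property for $\theta \ni k, n-k$. Your remark that the additional singular-value bounds are unused here, and that $\{k, n-k\}$ is compatible with $\theta = \theta^*$, is accurate bookkeeping.
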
 

Recall that we can identify $\partial_\infty \hat \Gamma_M$ for the Cayley graph $\hat \Gamma_M$ of $\Gamma_M$ with 
$\partial_\infty \hat M$ by Theorem 11.108 of \cite{DK2018}. 
Hence, we can identify $\partial_\infty^{(2)} \hat \Gamma_M$ with $\partial^{(2)}_\infty \hat M$. 
The Gromov flow space $\partial_\infty^{(2)} \hat \Gamma_M \times \bR$ is identified with 
$\partial^{(2)}_\infty \hat M \times \bR$. 
By the direct following of Proposition 4.9 of 
Bochi-Potrie-Sambarino \cite{BPS}, we obtain: 
\begin{theorem} \label{thm:bundle} 
	Let $\rho$ be partially hyperbolic with an index $k$, $k < n/2$, 
	in the singular value sense. 
	Then 
	$\rho$ is partially hyperbolic in the bundle sense with index $k$.
\end{theorem}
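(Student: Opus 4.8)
The plan is to transport the partial hyperbolicity from the singular-value side to the bundle side by first producing the equivariant limit maps, then invoking the domination criterion of Bochi--Potrie--Sambarino. Since $\rho$ is partially hyperbolic of index $k$ in the singular value sense, in particular it is $k$-dominated, so by Lemma \ref{lem:Equiv} it is $P_\theta$-Anosov for $\theta \ni k, n-k$; hence by Lemma \ref{lem:phypAnos} (really by the Guichard--Wienhard definition together with \cite{GGKW17ii, BPS}) we obtain continuous $\rho$-equivariant transverse maps $\zeta: \partial_\infty \hat\Gamma_M \to \mathcal{G}_k(\bR^n)$ and $\theta: \partial_\infty\hat\Gamma_M \to \mathcal{G}_{n-k}(\bR^n)$. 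This is exactly the data making $\rho$ a $k$-convex representation in the sense above. Using the identification $\partial_\infty\hat\Gamma_M \cong \partial_\infty\hat M$ (Theorem 11.108 of \cite{DK2018}) and the resulting identification of the Gromov flow space $\partial_\infty^{(2)}\hat\Gamma_M \times \bR$ with $\partial_\infty^{(2)}\hat M \times \bR$, the maps $\zeta, \theta$ assemble into the $\phi_t$-invariant splitting $E_\rho = Z \oplus \Theta$ described above, with $\hat Z(x) = \zeta(x_+)$ and $\hat\Theta(x) = \theta(x_-)$.

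Next I would verify that this splitting is a \emph{dominated} splitting for the linear flow $\psi_t$, with $Z$ dominating $\Theta$ — equivalently, that $\Hom(Z,\Theta)$ is uniformly contracted. This is where Proposition 4.9 of \cite{BPS} enters directly: the $k$-domination hypotheses $a_k(\rho(g))/a_{k+1}(\rho(g)) \geq C\exp(A w(g))$ and $a_{n-k}(\rho(g))/a_{n-k+1}(\rho(g)) \geq C\exp(A w(g))$ are precisely what Bochi--Potrie--Sambarino translate, via the correspondence between singular-value gaps along words and exponential contraction/expansion along the flow, into uniform contraction of the bundle $\Hom(Z,\Theta)$ over the geodesic flow. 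So $\rho$ is $k$-Anosov in the bundle sense; that is, clauses (i) and (ii) of Definition \ref{defn:phyp} hold with $\bV_+ := Z \cap (\text{strongly expanded part})$ and $\bV_- := (\text{strongly contracted part})$ and $\bV_0$ the complementary neutral subbundle. More precisely, the extra hypotheses of partial hyperbolicity in the singular value sense, namely $a_p(\rho(g)) \geq C^{-1}\exp(A w(g))$ for $p \leq k$ and $a_r(\rho(g)) \leq C\exp(-A w(g))$ for $r \geq n-k+1$, upgrade the relative contraction of $\Hom(Z,\Theta)$ to the \emph{absolute} dilation of $\bV_+$ under $\Phi_{-t}$ and absolute contraction of $\bV_-$ under $\Phi_t$ required in (iii)(a),(b), while the dominated-splitting property among the three pieces gives the dominance inequality \eqref{eqn:dominance} in (iii)(c).

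The one point that needs care — and which I expect to be the main obstacle — is the passage from the word-length/singular-value estimates, which are indexed by group elements $g \in \Gamma_M$, to the continuous-time flow estimates along the Gromov flow space, i.e. checking that the constants $A, a, A', a'$ in Definition \ref{defn:phyp}(iii) can be extracted uniformly. This requires the comparison between the flow-time parameter $t$ on $\partial_\infty^{(2)}\hat M \times \bR$ and the word length of the group elements whose axes shadow a given flow line, together with the quasi-isometry $\mathcal{E}\circ\mathcal{F}$ of Theorem \ref{thm:identify} so that the statement can be phrased on $\Uc\hat M$ as in Definition \ref{defn:phyp} rather than only on the abstract flow space. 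This bookkeeping is exactly what Proposition 4.9 of \cite{BPS} carries out in the cocycle language, so invoking it ``by the direct following'' is legitimate; the work on our side is to check that the $k$-convex splitting constructed from the Anosov limit maps agrees (up to the collapsing in Theorem \ref{thm:identify}) with the splitting appearing in their proposition, which follows from uniqueness of the Anosov limit maps and of the dominated splitting.
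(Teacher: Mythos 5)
Your overall route is the same as the paper's: obtain dominated splittings from the singular-value gaps via Bochi--Potrie--Sambarino, upgrade relative domination to absolute expansion/contraction using the extra hypotheses $a_p(\rho(g))\geq C^{-1}\exp(Aw(g))$ for $p\leq k$ and $a_r(\rho(g))\leq C\exp(-Aw(g))$ for $r\geq n-k+1$, and finally transport everything to $\Uc\hat M$ through the quasi-isometry of Theorem \ref{thm:identify}. But there is a concrete gap at the central structural step: you never actually construct the three-way splitting $\bV_+\oplus\bV_0\oplus\bV_-$. From the $k$-convexity data $\zeta:\partial_\infty\hat\Gamma_M\to\mathcal{G}_k(\bR^n)$, $\theta:\partial_\infty\hat\Gamma_M\to\mathcal{G}_{n-k}(\bR^n)$ you only get the single two-block splitting $E_\rho=Z\oplus\Theta$ with $\dim Z=k$, $\dim\Theta=n-k$; the phrases ``$Z\cap(\text{strongly expanded part})$'' and ``the complementary neutral subbundle'' do not identify where $\bV_-$ and $\bV_0$ come from or why they are flow-invariant continuous subbundles. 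The paper's mechanism is to use \emph{both} the $k$-domination and the $(n-k)$-domination (which you have, since $\theta\ni k,n-k$) to produce a second splitting $E_\rho=Z'\oplus\Theta'$ with $\dim Z'=n-k$, $\dim\Theta'=k$, and then to invoke Proposition 2.1 of \cite{BPS} to get the nesting $Z\subset Z'$ and $\Theta'\subset\Theta$, after which one sets $\bV_+=Z$, $\bV_0=Z'\cap\Theta$, $\bV_-=\Theta'$. Without this (or an equivalent device) clauses (i) and (ii) of Definition \ref{defn:phyp} are not established, and the dominance inequality \eqref{eqn:dominance} between the three pieces cannot be read off, since it is deduced from the two separate dominations $Z\succ\Theta$ and $Z'\succ\Theta'$.

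A secondary, smaller point: for the absolute rates in (iii)(a),(b) you correctly identify the needed comparison between word length and flow time, but the paper's actual argument is more specific --- it uses Theorem 2.2 of \cite{BPS} (convergence of the singular subspaces $U_k(\psi^t_{\phi^{-1}_t(x)})$ to $\hat Z(x)$ and of $S_{n-k}$ to the complementary bundle), a uniform lower bound on the angle between the two limit bundles coming from cocompactness, and the fact that the flow line passes through translates of a fundamental domain, producing group elements $g_i$ with $a_k(g_i)$ growing exponentially. Your sketch is compatible with this but leaves the projection/angle estimate, which is what actually converts the singular-value growth into norm growth on $\bV_+$, unaddressed.
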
 
\begin{proof} 
	By Lemma \ref{lem:phypAnos} and Proposition \ref{lem:Equiv}, 
	$\rho$ is $k$-dominated.

	Bochi, Potrie, and Sambarino \cite{BPS} 
	construct subbundles $Z$ and $\Theta$ of $E_\rho$ 
	where $Z$ dominates $\Theta$
	and $\dim Z = k, \dim \Theta = n-k$. 

	Now, $\rho$ is also $n-k$-dominated. 
	By Lemma \ref{lem:phypAnos}, 
	we obtain a new splitting of the bundle $E_\rho = Z'\oplus \Theta'$ 
	over $\partial_\infty^{(2)}\hat M \times \bR$
	where $\dim Z' = n-k$ and $\dim \Theta'$ is $k$. 
	Furthermore, $Z'$ dominates $\Theta'$. 
	Then $Z$ is a subbundle of $Z'$, and 
	 $\Theta'$ is a subbundle of $\Theta$ by 
	Proposition 2.1 of \cite{BPS}. 
	%
	We now form bundles $Z, Z'\cap \Theta, \Theta'$ over $\partial_\infty^{(2)}\hat M \times \bR$. 
	The dominance property \eqref{eqn:dominance} of Definition \ref{defn:phyp}  follows from those of $Z, \Theta$ and $Z', \Theta'$. 
		
	There are exponential expansions 
	of $a_i(g)$ for $1\leq i \leq k$ by the partially 
	hyperbolic condition of the premise. 
	Now (iii)(a) of Definition \ref{defn:phyp} follows: 
	Define $U_p(A)$ as the sum of eigenspaces of $\sqrt{AA^*}$ corresponding to $p$ largest eigenvalues. 
	Define $S_{n-p}(A):= U_{n-p}(A^{-1})$. 
	Since \red{$\hat M/\Gamma_M$} is compact, Theorem 2.2 of \cite{BPS} shows 
	\[U_k(\psi^t_{\phi^{-1}_t(x)})\ra \hat Z(x) \hbox{ and } 
	S_{n-k}(\psi^t_x) \ra \hat \Phi(x) \hbox{ uniformly for $x \in \hat \Gamma_M$  as } t \ra \infty.\] 

	Since by the paragraph above Theorem 2.2 of \cite{BPS}, we obtain
	\[\psi^t(S_{n-k}(\psi^t_{\phi^{-1}_t(x)})^{\perp}) = U_k(\psi^t_{\phi^{-1}_t(x)}),\]
	where the unit vectors of $S_{n-k}(\psi^t_{\phi^{-1}_t(x)})^{\perp}$ goes to 
	vectors of norm at least the $k$-th singular value for $\psi_t$. 
		The angle between $\hat Z(x)$ and $\hat \Phi(x)$ is uniformly bounded below by 
a positive constant 
		by the compactness of \red{$\hat M/\Gamma_M$}. 
		Hence, unit vectors in $\hat Z({\phi^{-1}_t(x)})$ have 
		components in $S_{n-k}(\psi^t_{\phi^{-1}_t(x)})^\perp$ whose lengths are uniformly bounded below by a positive constant 
		for sufficiently large $t$. 

	As $t \ra \infty$, $\phi_t$ passes the images of the fundamental domain under $\Gamma_M$. 
	Let $g_i$ denote the sequence of elements of $\Gamma_M$ arising in this way. 
		Since the $a_k(g_i)$ grows exponentially, the expansion properties of $Z$ follow
\red{by the above paragraph}. 
	(iii) of  Definition \ref{defn:phyp}  follows up to reversing the flow. 
	
			By the quasi-isometry of $\Uc \hat M$ with $\partial^{(2)}\hat M \times \bR$ in Theorem \ref{thm:identify},
	we construct our pulled-back partially hyperbolic bundles over $\Uc \hat M$.
%
%
%
	%
\end{proof}

\subsection{Promoting $P$-Anosov representations to  
partially hyperbolic ones}
\label{sub:converse} 

\red{This subsection contains the main results of Part 1. The complication is that the maximal abelian subgroup of the Zariski closure of the linear part of the holonomy group sometimes embeds not linearly but piecewise linearly. Also, we have to understand the case when the Zariski closure is not reductive.}

	\begin{theorem}[Hirsch-Kostant-Sullivan \cite{KS75}]\label{thm:HKS}
	Let $N$ be a complete affine manifold.
	Let $\rho:\pi_1(N) \ra \GL(n, \bR)$ be the linear part of the affine holonomy representation $\rho'$.
	Then $g$ has an eigenvalue equal to $1$ for each $g$ in the Zariski closure $Z(\rho(\pi_1(N)))$ of $\rho(\pi_1(N))$. 
\end{theorem}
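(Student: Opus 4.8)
The plan is to reduce the statement to the classical Hirsch--Kostant--Sullivan result on complete affine manifolds: every affine holonomy group of a complete affine manifold consists of transformations whose linear parts have $1$ as an eigenvalue, and this property passes to the Zariski closure of the linear holonomy group. First I would recall the underlying mechanism: if $\gamma \in \pi_1(N)$ acts on $\mathds{A}^n$ as $x \mapsto \rho(\gamma) x + u(\gamma)$ and the action is free (which it is, since $N = \mathds{A}^n/\Gamma$ is a manifold), then $\gamma$ has no fixed point in $\mathds{A}^n$, which forces the affine equation $\rho(\gamma) x + u(\gamma) = x$, i.e. $(\rho(\gamma) - I)x = -u(\gamma)$, to have no solution. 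By the Fredholm alternative in finite dimensions this means $\rho(\gamma) - I$ is not invertible, i.e. $1$ is an eigenvalue of $\rho(\gamma)$. So the claim holds on the image $\rho(\pi_1(N))$ itself; the content of Theorem~\ref{thm:HKS} is the promotion to the Zariski closure.

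For the Zariski-closure step I would argue as follows. The condition ``$\det(g - I) = 0$'' is a single polynomial equation in the matrix entries of $g \in \GL(n,\bR)$, hence defines a Zariski-closed subset $W \subset \GL(n,\bR)$. By the previous paragraph $\rho(\pi_1(N)) \subset W$. Since $W$ is Zariski closed, the Zariski closure $Z(\rho(\pi_1(N)))$ is also contained in $W$, which is exactly the assertion that every $g$ in the Zariski closure has $1$ as an eigenvalue. This is the elementary route; it does not even require the full strength of the Hirsch--Kostant--Sullivan cohomological vanishing argument, only the freeness of the action plus the Zariski-closedness of the eigenvalue locus.

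I would still indicate the alternative, more structural viewpoint of Hirsch--Kostant--Sullivan for completeness, since the theorem is attributed to them: a complete affine structure on $N$ with $K(\pi_1(N),1)$ a finite complex gives a flat affine bundle whose Euler-type obstruction vanishes, and the translational part $u: \pi_1(N) \to \bR^n$ defines a class in $H^1(\pi_1(N); \bR^n_\rho)$ whose properties (together with the parallel volume form, or the absence of fixed points) force the invariant-vector condition $\mathbf{1} \in \IM(\rho(g) - I)^\perp$ in a way stable under Zariski limits. But since the freeness argument already gives the pointwise statement and Zariski-closedness gives the closure statement, the main obstacle is purely bookkeeping: making sure the polynomial $g \mapsto \det(g-I)$ is genuinely a regular function on the algebraic group $\GL(n,\bR)$ (it is, being a polynomial in the entries), and that ``Zariski closure'' in the statement is taken inside $\GL(n,\bR)$ with its standard structure as an affine algebraic group, so that the containment $\rho(\pi_1(N)) \subset W$ indeed propagates to the closure. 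There is essentially no hard analytic step here; the result is cited precisely so that it can be invoked as a black box in the subsequent eigenvalue-versus-singular-value arguments.
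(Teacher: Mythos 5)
Your proposal is correct and is essentially identical to the paper's proof: the paper also observes via the fixed-point-free action (using Cramer's rule) that $\det(\rho(\gamma)-\Idd)=0$ for every $\gamma$, and then notes that this polynomial condition automatically propagates to the Zariski closure. No further comment is needed.
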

	\begin{proof} 
		We define function $f: Z(\rho(\pi_1(N))) \ra \bR$ 
		by $f(x) = \det(x-\Idd)$ for each element $x$. 
		Suppose that $f(\rho(g)) \ne 0$, $g\in \Gamma_M$. Then $\rho(g)(x) + b = x$ has 
		a solution for each $b$ by Cramer's rule. 
		Hence $\rho'(g)$ has a fixed point, which is a contradiction.
		We obtain that $f$ is zero on the Zariski closure, 
		implying that each element has at least one eigenvalue equal to $1$.  
		\end{proof} 

For a reductive Lie group $Z$,  
let $A_Z$ denote the maximal $\bR$-split torus of $Z$ with corresponding 
Lie algebra $\mathfrak{a}_Z$, and 
 let $\mathfrak{a}_Z^+$ denote the Weyl chamber of $\mathfrak{a}_Z$.
Also, we denote 
the Jordan (or Lyapunov) projection by $\log \vec{\lambda}_Z: Z \ra \clo(\mathfrak{a}^+_Z)$. 
Let $\Gamma$ be Zariski dense in $Z$. (See Section 2.4 of \cite{GGKW17ii}.)
The {\em Benoist cone}  $\mathcal{BC}_Z(\Gamma)$ of a linear group 
$\Gamma \subset Z$ is the closure of the set of 
all positive linear combinations of 
$\log \vec{\lambda}_Z(g), g\in \Gamma$ in $\clo(\mathfrak{a}^+_Z)$.

The following was worked out with the help of Danciger and Stecker: 
\begin{proposition}\label{prop:PAnosovStrict}  
Assume the following\/{\em :} 
\begin {itemize}
	\item  $G$ is a finitely generated discrete group. 
	\item $\rho:G \ra \GL(n, \bR)$ is a
	$P_\theta$-Anosov representation 
	for some index set $\theta =\theta^*$ 
	containing $k$ and $n-k$ for 
	$1\leq k \leq n/2$.
	\item $\rho(g)$ for each $g\in G$ has an eigenvalue equal to $1$. 
	\item The Zariski closure $Z$ of the image of $\rho$ is 
	a connected reductive Lie group. 
\end{itemize} 
	Then $\rho$ is partially hyperbolic with index $k$ in the singular value sense, 
	and $k < n/2$.   
	\end{proposition}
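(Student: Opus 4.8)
The goal is to upgrade the $k$-domination coming from $P_\theta$-Anosov-ness (which by Lemma~\ref{lem:Equiv} gives exponential gaps $a_k/a_{k+1}$ and $a_{n-k}/a_{n-k+1}$) to the stronger partial hyperbolicity in the singular value sense, i.e.\ to the estimates $a_p(\rho(g)) \geq C^{-1}\exp(A w(g))$ for $p \leq k$ and $a_r(\rho(g)) \leq C\exp(-A w(g))$ for $r \geq n-k+1$. The key extra input is the eigenvalue-$1$ condition (Theorem~\ref{thm:HKS}), which must be converted from a statement about eigenvalues (Jordan projection) into one about singular values (Cartan projection). The mechanism for this conversion is Benoist's theorem (\cite{Benoist97}, \cite{Benoist98}): since $\Gamma := \rho(G)$ is Zariski dense in the connected reductive group $Z$, the Benoist cone $\mathcal{BC}_Z(\Gamma) \subset \clo(\mathfrak{a}_Z^+)$ has nonempty interior, and moreover the Jordan projections $\log\vec\lambda_Z(g)$ and the Cartan projections $\log\vec a_Z(g)$ stay within bounded distance of each other ``in direction'' — more precisely, the asymptotic cone of the Cartan projections coincides with the closure of the Benoist cone.

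\textbf{Step 1: locate the Benoist cone inside a hyperplane.} For each $g \in G$ the matrix $\rho(g)$ has an eigenvalue equal to $1$, so $0$ lies in the multiset $\{\log\lambda_1(g), \dots, \log\lambda_n(g)\}$. After reordering into $\clo(\mathfrak{a}_n^+)$ this says $\log\vec\lambda(g)$ lies in the union of the coordinate hyperplanes $\{x_i = 0\}$; combined with the $k$-domination, which forces $\log\lambda_k(g) > 0 > \log\lambda_{k+1}(g)$ (strict, with a gap growing linearly in $w(g)$) once $w(g)$ is large, the only coordinate that can vanish is one with index in $\{k+1, \dots, n-k\}$ — but actually we only need that the eigenvalue $1$ is \emph{not} among the top $k$ or bottom $k$ once $g$ is ``large''. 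The plan is to push this to the projection $\log\vec\lambda_Z$ into $\mathfrak{a}_Z$ and deduce that the Benoist cone $\mathcal{BC}_Z(\Gamma)$ is contained in the closed region where the $k$-th largest entry is $\geq 0$ and the $(n-k+1)$-st is $\leq 0$.

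\textbf{Step 2: transfer to singular values via Benoist / Breuillard–Sert.} Using the result that the closed cone generated by Cartan projections equals the closed cone generated by Jordan projections (this is where \cite{Benoist97,Benoist98}, together with the limit-cone description in \cite{BS2021} and the $\mathfrak{a}_Z$-vs-$\mathfrak{a}_n$ comparison of \cite{KP2002p}, enter), the same constraint holds for the asymptotic directions of $\log\vec a(\rho(g))$: the accumulation points of $\log\vec a(\rho(g))/w(g)$ lie in that region. Here the $k$-domination already gives that, along these limit directions, the $k$-th and $(n-k)$-th gaps are bounded \emph{below} by a positive constant, hence the $k$-th largest singular-value exponent and the $(n-k+1)$-st one are uniformly bounded away from $0$ on opposite sides; combined with monotonicity of the ordered singular values this yields the desired two-sided linear bounds $\log a_p(\rho(g)) \geq c\, w(g) - c'$ for $p \leq k$ and $\log a_r(\rho(g)) \leq -c\,w(g) + c'$ for $r \geq n-k+1$. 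Finally $k < n/2$: equality $k = n/2$ is excluded because then the top $k$ and bottom $k$ singular values exhaust all $n$ of them, leaving no room for the eigenvalue $1$ to be a singular-value direction of subexponential size, contradicting Step~1; so $\dim \bV_0 \geq 1$.

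\textbf{Main obstacle.} The technical heart — and the step I expect to be most delicate — is Step~2: the passage from the eigenvalue constraint to the singular-value constraint. Benoist's theorem controls the \emph{limit cone} (asymptotic directions), so it only yields the inequalities up to a sublinear error a priori; one must argue, using the $k$-domination's built-in uniformity (the gaps are genuinely linear, not just superlinear, once we invoke the Bochi–Potrie–Sambarino characterization and the dominated-splitting machinery), together with the reductivity and Zariski-density hypotheses, that the error is in fact bounded. The comparison between the Weyl chamber $\mathfrak{a}_Z^+$ of the reductive group $Z$ and the ambient $\mathfrak{a}_n^+$ of $\GL(n,\bR)$ — i.e.\ making sure that ``eigenvalue $1$'' and the index positions $k$, $n-k$ are compatible with the $Z$-structure — also needs care, and is presumably where the ideas attributed to Danciger and Stecker do the work.
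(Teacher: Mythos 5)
Your overall architecture is the right one and matches the paper's: use the eigenvalue-$1$ condition to constrain the Jordan projections, pass to the Benoist/limit cone of $\rho(G)$ in the reductive Zariski closure $Z$, transfer to Cartan projections via Benoist and Breuillard--Sert (asymptotic directions of $\log\vec a$ lie in the cone spanned by the $\log\vec\lambda$), and then upgrade the resulting sublinear control on the middle singular values to genuine exponential growth/decay of $a_p$, $p\le k$, and $a_r$, $r\ge n-k+1$, using the exponential gaps $a_k/a_{k+1}$ and $a_{n-k}/a_{n-k+1}$ supplied by Lemma~\ref{lem:Equiv}. Your Step~2 is essentially the paper's step (III), and your argument for $k<n/2$ is the paper's.

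However, there is a genuine gap in your Step~1, and it is exactly the technical heart of the proposition. You assert that $k$-domination ``forces $\log\lambda_k(g)>0>\log\lambda_{k+1}(g)$,'' so that the vanishing eigenvalue coordinate must sit in positions $k+1,\dots,n-k$. This does not follow: domination gives only a \emph{gap} $\log\lambda_k(g)-\log\lambda_{k+1}(g)\ge C\llrrV{\log\vec\lambda(g)}$, and says nothing about where $0$ sits relative to that gap. For a single element nothing prevents, say, $\lambda_1(g)=1$ with all other eigenvalue moduli $<1$ (then $g^{-1}$ has its unit eigenvalue in the bottom position), and a priori different group elements could place their unit eigenvalue on different sides of $k$. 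Ruling this out is what the paper's step (II) does, and it is not a one-line observation: one uses that the Benoist cone $B\subset\clo(\mathfrak{a}_Z^+)$ is \emph{convex} with nonempty interior (Zariski density in the connected reductive $Z$ is essential here), that every nonzero $b\in B$ has a nonempty consecutive set $I(b)$ of vanishing coordinates (by Theorem~\ref{thm:HKS} plus density of Jordan directions), that $I$ is constant on the interiors of the ``generic flat domains'' of $B$ and upper semicontinuous across their walls, and that if $I$ took values on both sides of $k$ then connectivity would produce a point $b'$ with $\log\lambda_k(b')=\log\lambda_{k+1}(b')=0$, contradicting the uniform normalized gap \eqref{eqn:lklkpC}; the opposition involution (which preserves $\iota_Z(B)$) then forces $I(B-\{0\})\subset[k+1,n-k]$, with a separate argument in the rank-one case. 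Without this connectivity argument on the cone your Step~1 is an assertion of the conclusion rather than a proof of it, and Step~2 has nothing to transfer.
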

\begin{proof} 
	(I) We begin with some preliminary:  
%
	 	The embedding $Z \hookrightarrow \GL(n, \bR)$
	 induces a map $\iota_Z:\clo(\mathfrak{a}_Z^+) \ra \clo(\mathfrak{a}^+_n)$ for the Weyl chamber
	 $\mathfrak{a}^+_n$ of $\GL(n, \bR)$. 
	 Let $\log\lambda_1, \dots, \log\lambda_n$ denote the 
	 coordinates of the Weyl chamber $\mathfrak{a}^+_n$, 
which actually are standard coordinates.
	 Of course, $\iota_Z \circ \log \vec{\lambda}_Z| 
	 \rho(G) = \log\vec{\lambda}|\rho(G)$. \

	 	Let $B:=\mathcal{BC}_Z(\rho(G))$ denote the 
	 Benoist cone of $\rho(G)$ in $Z$. 
	  The image $\iota_Z(B)$ is not convex in general. 
	 Now, $B$ is a convex cone with nonempty interior in $\clo(\mathfrak{a}_Z^+)$. 	 

	 By Theorem \ref{thm:HKS}, 
	 for each $b \in B$, there is an index $i$ where 
	 the log of the $i$-th coordinate of $\iota_Z(b)$ equals $0$. 
We denote by 
\[\mathcal{I}(b)=\{ i | \log \lambda_i(\iota_Z(b)) = 0\},\] 
which is a consecutive set. 
	 Let $S([1, ..., n])$ denote the collection of 
	 subsets of consecutive elements.
	 We define an integral interval function
	 \[\mathcal{I}: B -\{0\} \ra S([1, \dots, n])\]
	 given by sending $b \in B$ to $\mathcal{I}(b)$. 
 Since the sequences of
ordered norms of eigenvalues of convergent sequence of linear maps converge,
we have 
	 \begin{equation} \label{eqn:limI} 
	 \lim_{i\ra \infty} \mathcal{I}(x_i) \subset I(x) \hbox{ whenever } 
	 x_i \ra x, x_i, x\in B.
	 \end{equation}

 Let $S$ denote a finite set of generators of $\rho(G)$ and their inverses. 
 By Theorems 1.3 and 1.4 of \cite{BS2021}, the sequence
 $\frac{1}{m} \log\vec{\lambda}_Z(S^m)\subset J(S)$ geometrically converges to 
 a compact convex set $J(S) \subset\clo(\mathfrak{a}_Z^+)$  as $m \ra \infty$ 
 (see Section 3.1 of \cite{BS2021}).
 Moreover, the cone spanned by $0$ and $J(S)$ is the Benoist cone $B$. 
 This implies that the set of directions of 
 $\log \vec{\lambda}_Z(\rho(G))$
 is dense in $B$. (See also \cite{Benoist98}). 
 Let $S_n$ denote the unit sphere in the Lie algebra $\mathfrak{a}_n$ of $\GL(n, \bR)$
with respect to the standard coordinates.
 Since 
 \[\log \lambda_k(g) -\log \lambda_{k+1}(g) \geq C \llrrV{\log \vec{\lambda}(g)}, g \in \rho(G),
 \hbox{ for a constant } C> 0 \]
 by Theorem 4.2(3) of \cite{GGKW17ii},  we obtain
 \begin{equation} \label{eqn:lklkpC} 
 	\log \lambda_k(b) -\log \lambda_{k+1}(b) \geq C 
 \end{equation} 
for every $b  \in B -\{O\}$ going to an element of $\iota_Z(B) \cap S_n$ 
for a constant $C> 0$.
	 
	    (II) Our first major step is to prove
	 $\mathcal{I}(B-\{0\}) \subset [k+1, n-k]$: 
	  Let $\iota_n$ denote the involution of $\mathfrak{a}_n$. 
First, suppose that the rank of $Z$ is $\geq 2$, and hence 
$\dim \mathfrak{a}_Z^+ \geq 2$.

A {\em diagonal subspace }
\[\Delta_{i, j} \subset \mathfrak{a}_n \hbox{ for } i\ne j, 
i, j=1, \dots, n,\] 
is a subspace of the maximal abelian algebra
$\mathfrak{a}_n$ given by 
$\log\lambda_i - \log\lambda_j=0$ for some indices $i$ and $j$. 
	The inverse images under $\iota_Z$ in $\clo(\mathfrak{a}_Z^+)$ of the 
	diagonal subspaces of $\mathfrak{a}$ may meet $B$. 
	We let $\mathcal{I}_B$ the maximal collection of indices 
	$(i, j), i < j$, where
	$\iota_Z^{-1}(\Delta_{i, j})$ contains $B$, which may be empty.  
	Let $\mathcal{I}'_B$ the collection of indices $(k, l)$, $k< l$, of 
	$\Delta_{k, l}$ not in $\mathcal{I}_B$. 
	We call the closures of the components of 
	\[B - \bigcup_{(k, l)\in \mathcal{I}'_B} \iota_Z^{-1}(\Delta_{k, l}) \] 
	the {\em generic flat domains of $B$}.  
	There are finitely many of these 
	to be denoted $B_1, \dots, B_m$ which are convex 
	domains in $B$.

We make notes of two ``discrete continuity" properties due to Danciger and Stecker: 
\begin{itemize} 
\item	In the interior of each $B_i$, $\mathcal{I}$ is constant since the 
$\mathcal{I}$-value is never empty and the change  in $\mathcal{I}$-values 
will happen only in the inverse images of some of $\Delta_{k,l}$. 
	
	
\item	For adjacent regions, their images under $\mathcal{I}$ differ by
	adding or removing some top or bottom consecutive sets. 
\end{itemize} 
Heuristically speaking: the worms leave ``traces".
	
	We now aim to prove that the following never occurs --(*): 
	\begin{quotation} 
	There exists a pair of elements $b_1, b_2 \in B$
	with $b_1 \in B_p^o$ and $b_2 \in B_q^o$ for some $p, q$
	where $\mathcal{I}(b_1)$ has an element $\leq k$ and 
	$\mathcal{I}(b_2)$ has an element $> k$.
	\end{quotation} 
(The idea is that $k, k+1$ both cannot be in a zero set since they differ by a constant due to Anosov properties. )

We use the discrete version of continuity: 
	Suppose that this (*) is true. 
	Since $B$ is convex with $\dim B \geq 2$, 
	there is a chain of adjacent polyhedral images 
	$B_{l_1}, \dots, B_{l_m}$ where $b_1 \in B_{l_1}^o$
	and $b_2 \in B_{l_m}^o$
	where $B_{l_j}\cap B_{l_{j+1}}$ contains a nonzero point.  
	For adjacent $B_{l_p}$ and $B_{l_{p+1}}$, 
	we have by \eqref{eqn:limI} 
\begin{equation} \label{eqn:intI} 
 \mathcal{I}(B_{l_p}^o)\cup \mathcal{I}(B_{l_{p+1}}^o) \subset \mathcal{I}(x)  \hbox{ for }
x\in \clo(B_{l_p})\cap \clo(B_{l_{p+1}}) -\{O\}. 
\end{equation} 
	Considering $\mathcal{I}(B_{l_j}^o)$, $j=1, \dots, m$, and 
	the $\mathcal{I}$-values of the intersections of adjacent generic flat domains, we have 
$\mathcal{I}(b') \ni k, k+1$ for some nonzero element $b'$ of $B$ by 
the connectedness of the $\mathcal{I}$-values as we change
generic flat domains to adjacent ones according to 
\eqref{eqn:intI}.
Hence, we obtain
\begin{equation}\label{eqn:kk+1}  
\log \lambda_k(b') = 0 
\hbox{ and }\log \lambda_{k+1}(b')=0.
\end{equation}

	Since we can assume $\iota_Z(b') \in S_n$ for $b'$ in \eqref{eqn:kk+1}
	by normalizing, we obtain a contradiction to \eqref{eqn:lklkpC}.

	Hence, we proved (*), and  
only	one of the following holds:
	\begin{itemize} 
	\item for all $b\in B$ in the interiors of generic flat domains,  every element of $\mathcal{I}(b)$ is $\leq k$ 
	or 
	\item for all $b \in B$ in the interiors of generic flat domains, every element of 
	$\mathcal{I}(b)$ is $> k$.
	\end{itemize} 

Note that the opposition involution $\iota_n$ sends $\iota_Z(B)$ to itself
since the inverse map $g\ra g^{-1}$ preserves $Z$ 
and the set of eigenvalues of $g$ is sent to the set of 
the eigenvalues of $g^{-1}$.
	In the first case, by the opposition involution $\iota_n$ preserving $G$, 
	$\mathcal{I}(\iota_n(b))$ for $b \in B$ contains an element $\geq n-k+1$.
	This again contradicts the above. 
	Hence, we conclude $\mathcal{I}(b) \subset [k+1, \dots, n]$ for all $b \in B$
	in the interiors of generic flat domains. 
Acting by the opposition involution and by similar arguments, we obtain 
$\mathcal{I}(b)\subset [k+1, \dots, n-k]$ for all $b \in B$ in the interiors of generic flat domains. 

Furthermore, we can show that $\mathcal{I}(b) \subset [k+1, n-k]$ for all $b \in B -\{0\}$ since otherwise, 
we have $\mathcal{I}(b') \ni k, k+1$ for some $b'\ne 0$ by a similar reason to the above applied to lower-dimensional strata. 
This is a contradiction as before. 
Also, the argument shows $[k+1, n-k] \ne \emp$ and $k < n/2$.

	 Suppose that $Z$ has rank $1$. 
Then the maximal abelian group $A_Z$ is $1$-dimensional. 
For each $g \in Z$, there is an 
index $i(g)$ for which $\log\lambda_{i(g)}(g) =0$. 
Thus, $\mathfrak{a}_Z$ is in the null space of $\log \lambda_i$ for some index $i$. 
Since $g \mapsto g^{-1}$ preserves $\mathfrak{a}_Z$, 
we have $\log \lambda_{n-i+1} = 0$ on $\mathfrak{a}_Z$. 
Since $\clo(\mathfrak{a}_Z^+) = \mathfrak{a}_Z \cap \clo(\mathfrak{a}^+)$, we also have 
$\log \lambda_{n-i+1}=0$ on $\mathfrak{a}_Z^+$. 
Hence, 
\[\mathcal{I}(B-\{0\}) = [i, \dots, n-i+1] \hbox{ for some index } i, 0< i \leq n/2.\] 
Now \eqref{eqn:lklkpC} and $k < i$ imply $\mathcal{I}(B-\{0\}) \subset [k+1, n-k]$. 

	(III) To complete the proof, we show the 
	partial hyperbolicity property: 
We let $\log \vec{a}_Z|Z: Z \ra \mathfrak{a}_Z^+$ denote the Cartan projection of $Z$. 
Recall 
\[\iota_Z\circ \log \vec{a}_Z| \rho(G) 
= \log \vec{a}|\rho(G).\] 
Since $\rho(G)$ has a connected reductive Zariski closure, 
the sequence 
\[\left\{\frac{\log \vec{a}(\rho(g_l)))}{w(g_l)}\right\}\]
have limit points only in $\iota_Z(B)$
by Theorem 1.3 of \cite{BS2021}. 
By  the conclusion of step (II), 
for any sequence $\{g_l\}$, 
\[\left\{\frac{\min\{|\log a_i(\rho(g_l))|\}_{i=k+1, \dots n-k}}{w(g_l)}\right\} \ra 0 ,\]
uniformly in terms of $w(g_l)$. 
Hence, this  implies that for any small constant $C> 0$, there is $N> 0$ such that 
\[a_i(\rho(g)) \geq \exp(-C w(g))\] for at least one $i \in [k+1, \dots, n-k]$ 
provided $w(g) > N$. 
This means that 
\[a_i(\rho(g)) \geq A \exp(-C w(g))\] for all $g$ for a constant $A > 0$ depending on $C$. 
By the $P_\theta$-Anosov property of $\rho$ and Lemma \ref{lem:Equiv}, we have 
\[\frac{a_k(\rho(g))}{a_{k+1}(\rho(g))}\geq A'\exp(C'w(g)), g \in \Gamma\] 
for some constants $A', C'> 0$. 
Since 
\[a_{k+1}(\rho(g)) \geq a_i(\rho(g)), i=k+1, \cdots, n-k, 
\hbox{ for every } g \in G,\] 
it follows that 
\[a_k(\rho(g)) \geq A'' \exp(C'' w(g))\] for some constants $C'', A''> 0$. 
Taking inverses also, we showed that $\rho'$ is partially hyperbolic 
in the singular value sense with index $k$. 

Finally, since $[k+1, n-k]$ is not empty, $k < n/2$.  
	\end{proof}

For a general representation $\phi: \Gamma \ra G$, we define the {\em semisimplification} 
$\phi^{ss}: \Gamma \ra G$ by post-composing $\phi$ with the projection to 
the Levi factor of the Zariski closure of $\phi(\Gamma)$.  
(See Section 2.5.4 of \cite{GGKW17ii} for details.)
Also, the Zariski closure of a finite index group is reductive if so was the Zariski closure of the original discrete group.
(See Remark 2.38 of \cite{GGKW17ii}.) 
	
	\begin{proof}[Proof of Theorem  \ref{thm:main2}] 
		Suppose that $\rho$ is $P$-Anosov in the sense of \cite{GGKW17ii}.
		Hence, $\Gamma:= \rho(\pi_1(N))$ is word-hyperbolic by \cite{GGKW17ii}. 
	
		First, suppose that the Zariski closure of the image of the linear holonomy group is 
	reductive. 
	By Lemma \ref{lem:Equiv}, $\rho$ is a $k$-dominated
	representation of index $k$. 
We take a finite index normal subgroup $\Gamma'$ of $\Gamma$. 
	Assume that the Zariski closure $Z$ of $\rho(\Gamma')$ is a connected 
	reductive Lie group. 	 
	By Theorem \ref{thm:HKS} and 
	Proposition \ref{prop:PAnosovStrict}, $\rho|\Gamma'$ is 
	partially hyperbolic in the singular value sense with index $k$, $k < n/2$.
   So is $\rho$ since $\Gamma$ is a finite-index extension of $\Gamma'$. 
	By Theorem \ref{thm:bundle}, $\rho$ is a partially hyperbolic 
	representation in the bundle sense with index $k$.

	
	
	Now, we drop the reductive condition of the Zariski closure. 
Mainly, we have to prove the expanding and contracting properties in the subbundles. 
	The semisimplification $\rho^{ss}$ is $P$-Anosov by Proposition 1.8 of \cite{GGKW17ii}. 
	Since the projection to the Levi factor does not change eigenvalues by the proof of Lemma 2.40 of \cite{GGKW17ii}, 
$\rho^{ss}(g)$	for each $g \in \Gamma$ has an eigenvalue equal to $1$ by	Theorem \ref{thm:HKS}. 
	By Proposition \ref{prop:PAnosovStrict} and Theorem \ref{thm:bundle}, 
	$\rho^{ss}$ is a partially hyperbolic representation in the bundle sense with index $k$, $k < n/2$. 
	By Proposition 1.3 of \cite{GGKW17ii}, $\rho$ is $P$-Anosov with respect to 
	a continuous Anosov map $\zeta: \partial_\infty \Gamma \ra {\mathcal{F}}$ for the flag variety 
	$\mathcal{F} := \GL(n, \bR)/P$.

We can conjugate $\rho$ as close to $\rho^{ss}$ as one wishes in $\Hom(\Gamma, \GL(n, \bR))$
since the orbit of $\rho^{ss}$ is the closed orbit in the closure of the orbit of $\rho$ under the conjugation 
action. (See Section 2.5.4 of \cite{GGKW17ii}. )
Let $\zeta_i: \partial_\infty \Gamma \ra {\mathcal{F}}$ denote the Anosov map for $\rho_i$ 
conjugate to $\rho$ converging to $\rho^{ss}$. 
Theorem 5.13 of \cite{GW12} generalizes to $\GL(n, \bR)$ 
since we can multiply a representation 
by a homomorphism $\Gamma \ra \bR^+$ not changing the P-Anosov property
to $\SL_\pm(n, \bR)$-representations. 
(See Section 2.5.3 of \cite{GGKW17ii}). 
Hence, Theorem 5.13 implies that
$\zeta_i$ converges to $\zeta^{ss}: \partial_\infty \Gamma \ra {\mathcal{F}}$
the map for $\rho^{ss}$ in the $C^0$-sense. 

Let $\bV^{ss+}, \bV^{ss0}$, and $\bV^{ss-}$ denote the bundles of the partially hyperbolic decomposition of $\bR^n_{\rho^{ss}}$ 
as in Definition \ref{defn:phyp}. 
Let $\bV^{+}_i, \bV^{0}_i$, and $\bV^{-}_i$ denote the bundles of the decomposition of $\bR^n_{\rho_i}$ 
of respective dimensions $k$, $n-2k$, and $k$ obtainable from $\zeta_i$
since $\rho_i$ is $k$-dominated.  
The  single product bundle $\Uc \hat M \times \bR^n$ covers the direct sums of these bundles.
\begin{itemize} 
\item Let $\hat \bV^{ss+}, \hat \bV^{ss0}$, and $\hat \bV^{ss-}$ denote the cover of 
$\bV^{ss+}, \bV^{ss0}$, and $\bV^{ss-}$. 
\item Let $\hat \bV^{+}_i, \hat \bV^{0}_i$, and $\hat \bV^{-}_i$ denote the cover of 
$\bV^{+}_i, \bV^{0}_i$, and $\bV^{-}_i$. 
\end{itemize} 
Then the above convergence $\zeta_i \ra \zeta^{ss}$ means that 
\begin{equation} \label{eqn:convB}
\hat \bV^{+}_i(x) \ra \hat \bV^{ss+}(x), \hat \bV^{0}_i(x) \ra \hat \bV^{ss0}(x), 
\hbox{ and } \hat \bV^{-}_i(x) \ra \hat \bV^{ss-}(x) \hbox{ as } i \ra \infty
\end{equation}
in the respective Grassmann spaces pointwise for each $x \in \Uc \hat M$. 

We may construct a sequence of the fiberwise metrics $\llrrV{\cdot}_{\bR^n_{\rho_i}}$  so that 
the sequence of the lifted norms of $\llrrV{\cdot}_{\bR^n_{\rho_i}}$  
uniformly converging to that of  $\llrrV{\cdot}_{\bR^n_{\rho^{ss}}}$ on $\Uc \hat M \times \bR^n$
over each compact subset of $\Uc \hat M$ 
using a fixed set of a partition of unity subordinate to a covering with trivializations.  

The flow $\phi^{ss}$ on $\bR_{\rho^{ss}}$ satisfies the expansion and contraction properties
in Definition \ref{defn:phyp}. 
Note that the lift $\Phi^{ss}$ of $\phi^{ss}$ and that $\Phi_i$ of the flow $\phi_i$ for $\bR^n_{\rho_i}$ in the product bundle 
$\Uc \hat M \times \bR^n$ are the same by construction
induced from trivial action on the second factor. 
Since $M$ is compact, there exists $t_0$ such that for $t> t_0$, 
\begin{itemize} 
\item $\Phi^{ss}_{t}| \bV^{ss+} $  is expanding by a factor $c^+> 1$ 
\item $\Phi^{ss}_{t}| \bV^{ss-}$ is contracting by a factor $c^- < 1$  with respect to 
the metric $\llrrV{\cdot}_{\bR^n_{\rho^{ss}}}$. 
\end{itemize} 
Hence for sufficiently large $i$, 
the flow $\Phi_i$ on $\bR^n_{\rho_i}$  satisfies the corresponding properties
as well as the domination properties for $\llrrV{\cdot}_{\bR^n_{\rho_i}}$ 
by \eqref{eqn:convB}. 
Hence, $\Phi_i$ is partially hyperbolic with index $k$.  

For converse, suppose that $\rho$ is partially hyperbolic with index $k$. 
Proposition 4.5 of \cite{BPS} shows that $\rho$ is $k$-dominated. 
Lemma \ref{lem:Equiv} shows that $\rho$ is $P_\theta$-Anosov for $k \in \theta$. 
Also, $k < n/2$ since the neutral bundle exists. 
\end{proof}

\part{Partially hyperbolic holonomy and cohomological dimensions}  \label{part1} 

\section{Introduction}

\subsection{Main results} 
This paper continues Part 1 using its notation and terminology. Mainly, we will need Lemma \ref{lem:piUUM}, Definition \ref{defn:phyp}, and Theorem \ref{thm:main2} of Part 1. 

\red{
A well-known conjecture of Auslander is that a closed affine manifold must have a virtually solvable fundamental group. The Auslander conjecture is proved for closed complete affine manifolds of dimension $\leq 3$ by Fried-Goldman \cite{FG83}, for ones with linear holonomy groups in the Lorentz group by Goldman-Kamishima \cite{GK84}, and for ones of dimension $\leq 6$ by Abels-Margulis-Soifer \cite{AMS02}, \cite{AMS05}, \cite{AMS11}, and \cite{AMS97}. In particular, they showed that the linear holonomy group is not Zariski dense in $\SO(k, n-k)$ for $(n-k) - k \geq 2$ in \cite{AMS11}. Their techniques are basically based on a study of Anosov representations.
}


A good strategy is to study this question by investigating group actions. Margulis space-times provide examples (see \cite{CD15}). The existence of properly discontinuous affine actions on $\mathds{A}^n$ for large classes of groups, including all cubulated hyperbolic groups, was discovered by Danciger, Kassel, and Gu\'eritaud in \cite{DGK2020}, where $n$ is somewhat large compared to $\mathrm{cd}(G)$ of the properly acting affine group $G$. There is a survey on this topic in \cite{DDGS}.

We aim to prove: 
\begin{theorem}\label{thm:main-p2} 
Let $N$ be a complete affine manifold  for $n \geq 3$ with the finitely presented fundamental group. 
Suppose that $N$ has a partially hyperbolic linear holonomy group with index $k, k < n/2$,  and $K(\pi_1(N), 1)$ is realized by a finite complex.  

Then the cohomological dimension $\mathrm{cd}(\pi_1(N))$ is $\leq n-k$ for the partial hyperbolicity index $k$ of $\rho$. 
\end{theorem} 
The main idea for proof is that 
we will modify the developing map into a quasi-isometric embedding  into 
a generalized stable affine subspace. 
Hence, each boundary point of the group is associated with an affine subspace.

Recall from Part 1 the set of roots $\theta=\left\{\log \lambda_{i_{1}}-\log \lambda_{i_{1}+1}, \ldots, \log \lambda_{i_{m}}-\log \lambda_{i_{m}+1}\right\}$ 
with $1 \leq i_{1}<\cdots<i_{m} \leq n-1,$ 
of $\mathrm{GL}(n, \bR)$, 
and the parabolic group $P_\theta$. 


Since we can always find FS submanifolds for $\mathds{A}^n/\Gamma$, 
Theorem \ref{thm:main-p2} and Theorem \ref{thm:main2} of Part 1 will imply the result: 
\begin{corollary} \label{cor:main-p2} 
	Let a finitely presented group $G$ acts on $\mathds{A}^n$ , $n \geq 1$, faithfully, properly discontinuously,  and freely. 
	Suppose that $K(G, 1)$ is realized by a finite complex. 
	Suppose that the linear part of $G$ is P-Anosov for a parabolic group $P$.  

Then 
if $P = P_\theta$ for $\theta$ containing $\log \lambda_k-\log \lambda_{k+1}, k \leq n/2$, then $\mathrm{cd}(G) \leq n-k$ and $k < n/2$.  
	
\end{corollary}

When $(n, k) \ne (2, 1), (4, 2), (8, 4), (16, 8)$, 
without the proper action condition, the conclusions of Corollary \ref{cor:main-p2} are also implied by Theorem 1.3 of Canary-Tsouvalas \cite{CT2020} using Corollary 1.4 of Bestvina-Mess \cite{BM91}. The $(2, 1)$-case follows by Benz\'ecri \cite{Benzecri} and Milnor \cite{Milnor}. They work in $\SL_\pm(n, \bR)$; however, the linear part of $G$ can be made into one preserving the $P$-Anosov property.

\red{Under our properness conditions, these cases do not occur since $k < n/2$ holds. Although we have more assumptions, our methods are substantially different and use more direct geometrical arguments of projecting the holonomy cover to a stable affine subspace using coarse geometry.}

Our main point here is that we provide an alternative point of view. Also, we are currently generalizing these to relatively Anosov representations. The theory is currently developing by various groups as stated in the first part of the paper.

We proved the following which supports the Auslander conjecture. 

\begin{corollary}\label{cor:main2-p2} 
	A closed complete affine manifold $M^n$, $n \geq 3$, cannot have a $P$-Anosov linear holonomy group
	for a parabolic subgroup $P$ of $\GL(n, \bR)$. 
	\end{corollary}  
\begin{proof} 
	If otherwise,  $\mathrm{cd}(\Gamma_M) = n \leq n -k $ for any $k$, $1\leq k \leq n/2$ and $k$ in $\theta$ for $P = P_\theta$. 
	\end{proof} 

Again, the corollary is implied by Theorem 1.3 of \cite{CT2020} except for the $(2,1)$-case.
This case is ruled out by Benz\'ecri \cite{Benzecri} or Milnor \cite{Milnor}.

Finally, we obtain some compactness result: 
\begin{corollary} \label{cor:cpt-p2} 
	Suppose that $\rho: \pi_1(N) \ra \GL(n, \bR)$ be a $k$-Anosov representation
	that is a linear part of a properly discontinuous and free affine action on $\mathds{A}^n$, $n \geq 3$. 

Then	there exists a compact collection of affine subspaces of dimension $n-k$ in the affine Grassmannian space
	$\mathcal{AG}_{n-k}(\bR^n)$  invariant under the affine action. 
\end{corollary}

%


%

A well-known conjecture weaker than the Auslander conjecture is that a complete closed affine manifold cannot have a word hyperbolic fundamental group (see \cite{BCL} for a discussion).


We believe that our approach may be a step in the right direction, and plan to generalize this result for relatively Anosov representations, where there are developing series of research (see \cite{kapovich2023relativizing}, \cite{zhu2022p}, \cite{Zhu21}, and \cite{Zhu23}).

\red{However, we have not finished this new research since the field is still developing.}


\subsection{Outline of Part 2}

In Section \ref{sec:preliminary-p2}, 
we show that each affine subspace intersected with $\hat M$ is uniformly contractible. 
We show that the set of complete isometric geodesics in $\hat M$ ending 
at a common point of the ideal boundary $\partial_\infty \hat M$ is $C$-dense in $\hat M$ for some $C> 0$. 
(Note here, a ``geodesic'' for a metric space $X$ is an isometry from a subinterval to $X$. This is not true for Riemannian spaces. Hence, we need to use this notion.) 

We prove Theorem \ref{thm:main-p2} in Sections \ref{sec:neutral-p2}
and \ref{sec:geoconv-p2}:

In Section \ref{sec:neutral-p2}, we will define an affine bundle associated with an FS submanifold $M$ of a closed complete special affine manifold. 
We suppose that we have a partially hyperbolic linear representation. 
Theorem \ref{thm:neutral-p2} will modify the developing section of $\Uc \hat M$ so that
each complete isometric geodesic in $M$ develops inside an affine space in the neutral directions. The modification follows from the idea of 
Goldman-Labourie-Margulis \cite{GLM09}. 
We define $\mathcal{R}_p$ for $p\in \partial_\infty M$ to be the subspace of points on complete isometric geodesics on $\Uc \hat M$ ending at an ideal point $p$.
Proposition \ref{prop:Finalpart-p2} shows that $\mathcal{R}_p$ for each $p\in \partial_\infty M$ always develops into a generalized stable subspace. 
This follows since along the unstable directions, geodesics depart away from one another. 

In Section \ref{sec:geoconv-p2}, we will prove Proposition \ref{prop:quasiiso-p2}   
that $\hat M$ quasi-isometrically embed into generalized stable subspaces
since $\mathcal{R}_p$ embeds quasi-isometrically into one of the subspace, and
$\hat M$ and $\mathcal{R}_p$ are quasi-isometric. 
Then we prove Theorem \ref{thm:main-p2}: We use the quasi-isometric embedding of $\hat M$ into 
a generalized stable affine subspace to show that 
the maximal dimension of the compactly supported cohomology of $\hat M$ is less than 
the dimension of the generalized stable subspace $n-k$.
Since $\mathds{A}^n/\Gamma$ homotopy equivalent to $K(\Gamma, 1)$ 
has an exhaustion by a sequence of  FS submanifolds $M_i$, 
we will obtain the upper bound  $n-k$ of the cohomological dimension of $\Gamma$.

Finally, we prove Corollaries \ref{cor:main-p2}, \ref{cor:main2-p2}, and \ref{cor:cpt-p2}.

\section{ Preliminary} \label{sec:preliminary-p2} 

\red{
The main purpose is to introduce some notation and prove a few elementary propositions not explicitly written up in the literature. These notions are standard in coarse geometry but may not be commonly known by usual geometric topologists. 
Most of the technical background can be found in the book by Dru\c{t}u-Kapovich \cite{DK2018} which surveys the field extensively.}

\subsection{Grassmanians} \label{sub:Haudorff-p2} 
We assume $n \geq 3$ in this article.  
Let $\mathcal{G}_k(\bR^n)$ denote the space of $k$-dimensional subspaces of 
$\bR^n$. 
We consider the space $\mathcal{AG}_k(\bR^n)$ of affine $k$-dimensional subspaces of $\bR^n$.
The space has a proper complete Riemannian metric that we denote by $d_{\mathcal{AG}_k(\bR^n)}$. 
We also use these on subspaces of $\bR^n$ considered as 
$\mathds{A}^n$.

%

\subsection{Metrics and affine subspaces} 
%
%
%
Now,  $\mathds{A}^n=\tilde N$ has an induced complete  $\Gamma$-equivariant Riemannian 
metric from $N=\mathds{A}^n/\Gamma$ to be denoted by $ \dN$. 
Let $d_E$ denote a chosen standard Euclidean metric of $\mathds{A}^n$
fixed for this paper. 
We will assume that $\partial M$ is convex in this paper. 
 Let $d_M$ denote the path metric induced  from a Riemannian metric on 
 $M \subset\mathds{A}^n/\Gamma$, 
and let $d_{\hat M}$ denote the path metric on $\hat M$ induced from it.
 
From now on, we will fix the Rips constant $\delta$ for $\hat M$, and 
assume that $\hat M$ is $\delta$-hyperbolic. 


From Definition 8.27 of \cite{DK2018}, we recall: 
A map $f:X \ra Y$ between two proper metric spaces $(X, d_X)$ and $(Y, d_Y)$ 
is {\em uniformly proper} if $f$ is coarsely Lipschitz and there is a function $\psi: \bR_+ \ra \bR_+$ such that 
\[d_X\hbox{-diam}(f^{-1} (B^{d_Y}(y, R))) < \psi(R) \hbox{ for each }y \in Y, R\in \bR_+.\]
An equivalent condition is that there is a proper continuous function $\eta: \bR_+ \ra \bR_+$ 
so that 
\[ d_Y(f(x), f(y))\geq \eta(d_X(x, y))) \hbox{ for all } x, y \in X. \]
Here, functions satisfying the properties of 
$\psi$ and $\eta$ respectively are called an {\em upper} 
and {\em lower 
distortion functions}. 


A subspace $Y$ in a  metric space $(X, d)$ is {\em uniformly contractible} in a subspace $Y'$, $Y \subset Y'$,  if 
for every  $r> 0$, there exists a real number $R(r) > 0$ depending 
only on $r$ so that $B^d_r(x) \cap Y$ is contractible in $B^d_{R(r)}(x) \cap Y'$
for any $x \in Y$.
(We generalize Block and Weinberger \cite{BW93} and Gromov \cite{Gromov93}.) 

%



For an affine subspace $L$ of $\mathds{A}^n$, we denote by 
$d_L$ the path metric induced from the Riemannian metric of $ \dN$ restricted to $L$. 
We will use $\dN|L$ the restriction of $\dN$ to $L\times L$ as a metric subspace. 

	

\red{The following was not found in any literature.}
Again, this was worked out with Kapovich. 
\begin{theorem}\label{thm:k-conn-p2} 
		Suppose that $M$ is an FS submanifold of a complete affine manifold $N$ covered by $\mathds{A}^n$ with 
an	invariant path metric $\dN$ induced from a Riemannian metric.
	Let $L$ be an affine subspace of $\mathds{A}^n$ of $\dim \leq n$. 
	Let $\hat M \subset \mathds{A}^n=\tilde N$ be the cover of $M$ under the covering map $\mathds{A}^n \ra N$. 

	Then $L \cap \hat M$ is uniformly contractible in $L$ with the metric $d_L$
and $\dN|L$, and $L\cap \hat M$ with metric $d_L$ embeds 
uniformly properly in $\hat M$ with metric $\dN$.
\end{theorem} 
\begin{proof} 
The main idea for proof is to pull everything back to a fundamental domain 
and find a suitable convex ball which is bounded. 

	Let $F$ be a compact fundamental domain of $\hat M \subset \mathds{A}^n$, 
	containing the origin $O$. 
	Let $L'$ be any affine subspace of dimension $\dim L \leq n$.
	Let $d_{L'}$ denote the path metric on $L'$ induced from  $ \dN$. 
	Let $r$ be any positive real number. 
	The $d_{L'}$-ball $B^{d_{L'}}_r(x)$ in $L'$ of radius $r> 0$ 
	for $x \in F$ is a subset of 
	$B^{d_{\hat M}}_r(x)$  for a $d_{\hat M}$-ball of radius $r$ 
	with center $x \in F$ 
	since the endpoints of a $d_{L'}$-path of length $< r$ has 
	$d_{\hat M}$-distances $< r$ from $x$. 
	Since $\bigcup_{x\in F} B^{d_{\hat M}}_r(x)$ is bounded in $d_E$, 
	there is a constant $R(r, F)$ depending only on $r$ and $F$  
	so that $B^{d_{\hat M}}_r(x) \subset B^{d_E}_{R(r, F)}(O)$ for every $x \in F$
	for the Euclidean ball $B^{d_E}_{R(r, F)}(O)$ of radius 
	$R(r, F)$ with center $O$. 
	
	We take $C(R, F)$ for each $R> 0$ to be the supremum of 
	\[\{d_{L'}(x, y)| x \in F \cap L', y \in  B^{d_E}_R(O)\cap L'\}\]
where $L'$ varies over the collection of affine subspace $L'$ with $\dim L'=\dim L$
and $L' \cap F \ne \emp$. 
Since the set of such subspaces, $F$,  and $\clo(B^{d_E}_R(O))$ are compact, 
and $d_{L'}(x, y)$ is a continuous function of $L'$ and $x, y$,
the supremum exists. 
	Now, \[B^{d_E}_R(O)\cap L' \subset B^{d_{L'}}_{C(R, F)}(x)\subset L'
	\hbox{ for } x\in F \cap L'\] 
and for any affine subspace $L'$ with $\dim L'=\dim L$ containing $x \in F$. 
	Now, $B^{d_E}_R(O) \cap L'$ is convex
	and is a subset of $B^{d_{L'}}_{C(R, F)}(x)$. 	
	Since \[B^{d_{L'}}_r(x) \subset B^{d_E}_{R(r, F)}(O) \cap L', x\in F,\]  
	$B^{d_{L'}}_r(x)$ is contractible to a point 
	inside $B^{d_{L'}}_{C(R(r, F), F)}(x) \subset L'$.
	
	Since we can put any  $B^{d_{L}}_r(x)$ for $x\in L \cap \hat M$ to 
	a $d_{\gamma(L)}$-ball with the center in $F$ 
	by a deck transformation $\gamma$ of $\hat M$, 
	we obtained the radius $C(R(r, F), F)$ for each $r> 0$ so that 
	the uniform contractibility holds.  
	%
	%
	%
	%
	%

We can do the same for $\dN|L$ by doing the same argument as  above. 

The final uniform properness of the embedding $(L\cap \hat M, d_L) \ra (\hat M, \dN)$  is
proved: 
Given two points $x$ and $y$ of $L\cap \hat M$ of 
$\dN$-distance $< r$, they can be homotopied to a point 
in a $d_L$-ball of radius $R(r)$ for some $R(r)$ in $L$ by the part on $\dN|L$ 
in the above paragraph. Hence, $d_L(x, y) < 2R(r)$. 
%
\end{proof}

\subsection{Cobounded map and parallel homotopy}

Let  $(Z, d_Z)$ and $(Y, d_Y)$ be proper geodesic metric spaces. 
If $Y \subset Z$, then 
a function $f:Y \ra Z$ is {\em $d_Z$-cobounded} if 
$d_Z(x, f(x)) < C$ for a constant independent of $x$. 

A homotopy $H:Y \times I \ra Z$ is {\em $d_Z$-parallel} if 
$d_Z(H(z, t), z) \leq C$ for a constant $C$ independent of $z, t$. 

We will use the metric $\dN$ for these. 

\begin{lemma} \label{lem:parallel-p2} 
Let $f_i: Y \ra \mathds{A}^n$ be two maps where 
$ \dN(f_1(y), f_2(y)) \leq C$. 
Then $f_1$ and $f_2$ are $\dN$-parallelly homotopic. 
In particular, a $\dN$-cobounded map $Y \ra \tilde N=\mathds{A}^n$ is $\dN$-parallelly homotopic to the 
inclusion $Y \ra \mathds{A}^n$.
\end{lemma} 
\begin{proof} 
We define the homotopy 
$H(y, t) = t f_1(y) + (1-t) f_2(y)$ for $y\in Y, t \in [0, 1]$. 
For a fixed $y$, the $ \dN$-path length is bounded above by a constant $C'$ by our premise and Theorem \ref{thm:k-conn-p2}. 
Hence, $H$ is a $\dN$-parallel homotopy. 
The second part is immediate. 
\end{proof}

\subsection{The $C$-density of geodesics} 
\red{
The main purpose of this subsection is to prove Theorem  \ref{thm:Ucdense-p2} that the set of all geodesics ending at an ideal point is ``roughly'' dense in $\hat M$. 
 As far as the author knows, this elementary result was not found in literature. }

A subset $A$ of $\hat M$ is {\em $C$-dense in $\hat M$} for $C> 0$ 
if $d_{\hat M}(x, A )< C$ for every point $x\in \hat M$. 

\begin{lemma} \label{lem:geotwoends-p2} 
A geodesic in a Gromov hyperbolic space $X$  has two distinct endpoints in 
$\partial_\infty X$. 
\end{lemma} 
\begin{proof} 
Rays in a geodesic in different directions cannot be asymptotic
since the geodesic is isometrically embedded. 
(See Section 3.11.3 of \cite{DK2018}.)
\end{proof}

 We call constant $C$ satisfying the conclusion  
below the {\em quasi-geodesic constant}. 

\begin{lemma} \label{lem:nonasym-p2} 
	Given two rays $m$ and $m'$ ending at $p$ and $q$ in $\partial_\infty \hat M$. If $p\ne q$, then $d_{\hat M}(m(t), m'(t))\ra \infty$ 
	as $t \ra \infty$. 
	\end{lemma} 
\begin{proof} 
	Suppose that $d_{\hat M}(m(t_i), m'(t_i))$ is bounded for 
	some sequence $t_i$ with $t_i \ra \infty$. By
	Theorem 1.3 of Chapter 3 of \cite{CDP90}, $d_{\hat M}(m(t), m'(t))$ is uniformly bounded 
	since $m'(t)$ follows $m(t)$ as a quasi-geodesic. 
		If $d_{\hat M}(m(t), m'(t))$ is bounded, then $p = q$. 
Hence, the only possibility is that
 $d_{\hat M}(m(t), m'(t))\ra \infty$ as $t\ra \infty$. 
	\end{proof} 

\begin{lemma} \label{lem:uniform-nonasym-p2}
	Let $p$ be a point of $\hat M$. Let $B_1 \subset \partial_\infty \hat M$ 
and $B_2 \subset \hat M$ be two disjoint compact subsets. 
	Consider the set $S_{B_1, R}$, $i=1, 2$, be the set of points on rays from $p$ ending in $B_1$ outside the ball $B^{d_{\hat M}}_R(p))$.
	Then $d_{\hat M}(S_{B_1, R}, B_2) \ra \infty$ as $R \ra \infty$. 
	\end{lemma} 
\begin{proof} 
	Suppose not. 
Then there exists a sequence of points $y_j = \gamma^{(i)}_j(t_i) \in S_{B_1, R}$ for a geodesic 
$\gamma_j$ ending in $v_i \in B_1$ starting from $p$
and a sequence $z_i \in  B_2$
where $d_{\hat M}(y_j, z_j)$ is bounded above by a constant $C$ and $t_{i} \ra \infty$. 
Since $d_{\hat M}(z_i, p)$ is bounded above, 
$d_{\hat M}(p, y_i)$ is bounded above. This is a contradiction since 
$d_{\hat M}(p, y_i)= t_i$.  
	\end{proof} 

\begin{figure}[ht!]
	\labellist
	\small\hair 2pt
	\pinlabel $y$ [l] at 248 263
	\pinlabel $l_i(0)$ [l] at 200 247
	\pinlabel $m_i$ [l] at 250 200
	\pinlabel $l_i$ [r] at 200 200
	\pinlabel $l_i(t)$ [r] at 233 80
	\pinlabel $m_i(t'_i(t))$ [l] at 255 90
	\pinlabel $q_i$ [r] at 253 42
	\pinlabel $r_i$ [t] at 110 470
	\endlabellist
	\includegraphics[width=0.6\textwidth]{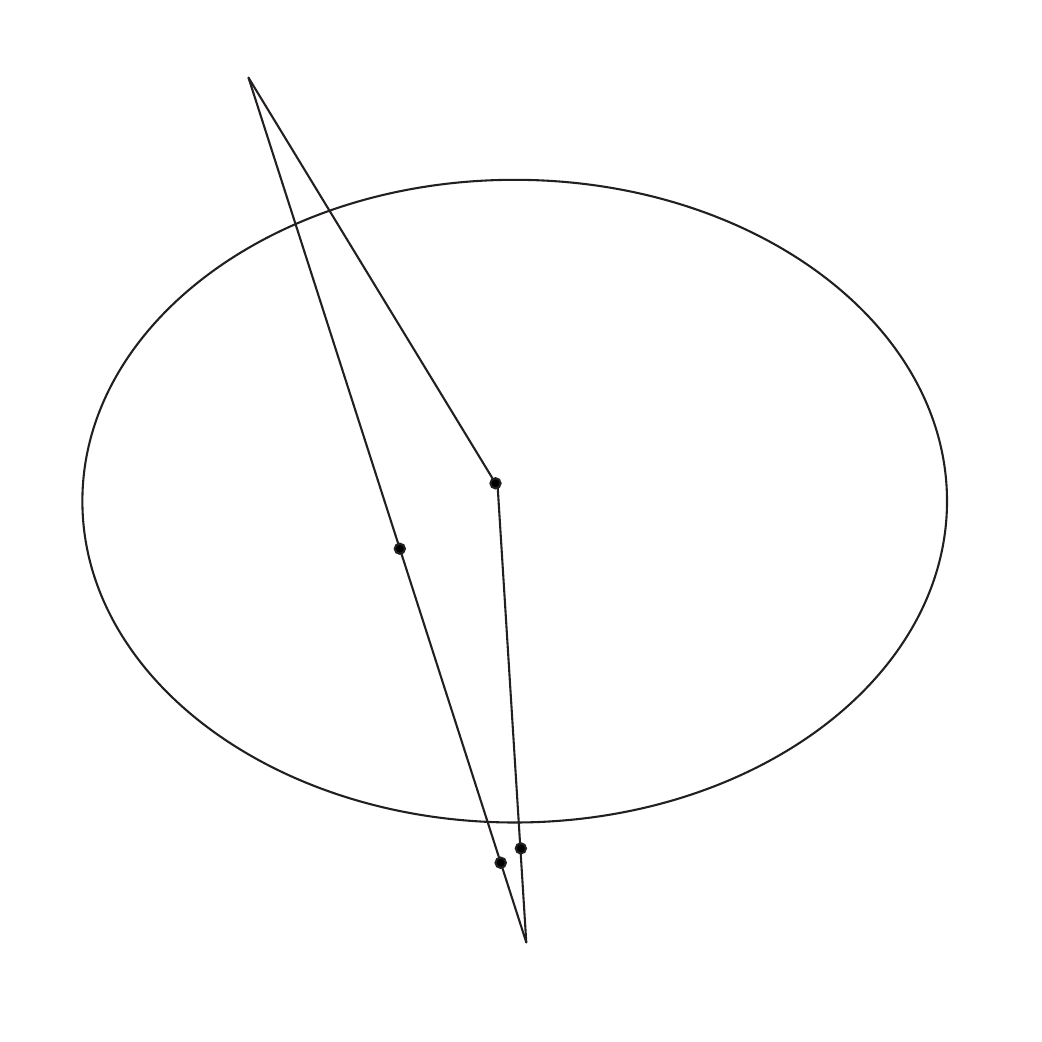}
	\caption[The proof of Lemma \ref{lem:geolim-p2}]{The proof of Lemma \ref{lem:geolim-p2} }
	\label{fig:circle-p2}
\end{figure}

The author cannot find the following elementary lemma in the literature. 
\begin{lemma}\label{lem:geolim-p2} 
	Let $q_i$ and $r_i$ be the forward and backward endpoints respectively in 
	$\partial_\infty \hat M$ of a complete isometric geodesic
	$l_i$. 
	Suppose that $l_i \ra l$ for a complete isometric geodesic $l$. 
	Suppose $q_i \ra q$ and $r_i \ra r$ for $q, r\in \partial_\infty \hat M$. 

	Then $l$ has endpoints $q$ and $r$.
	\end{lemma} 
\begin{proof} 
	Choose a point $y \in l$ and let 
	$m_i$ be a ray from $y$ to $q_i$
	as obtainable by Proposition 2.1 of Chapter 2 of 
	\cite{CDP90}. 
Let $B$ be a compact subset of $\partial \hat M$ containing all $q_i$
and $B'$ be another one containing all $r_i$ which is disjoint from $B$. 
We can choose such sets by Section 11.11 of \cite{DK2018} using the Shadow topology
since $\partial \hat M$ is first-countable.
We may assume without loss of generality that $l_i(0) \ra y$. 
Let $K$ be the \red{compact} convex hull of the compact set containing all $l_i(0)$ and $y$
and the isometric geodesic segments with endpoints $l_i(0)$ and $y$.  
Let $R_0$ be the number 
so that $d_{\hat M}(S_{B, R_0}, K) \geq 24 \delta +1$ by
Lemma \ref{lem:uniform-nonasym-p2} , and 
let $R= \max\{R_0, d_{\hat M}(y, l_i(0))|i=1, 2, \dots\}$.
%
	
	
	Considering the geodesic triangles with vertices $l_i(0), q_i, y$
	and with two edges equal to $m_i$ and a part of $l_i$ from $l_i(0)$, 
	we obtain a function $t'_i$ with values $> R$ where
		 \begin{equation}\label{eqn:mi1-p2} 
		d_{\hat M}(l_i(t), m_i(t'_i(t))) \leq 24\delta
		\hbox{ for }  t> 0 \hbox{ provided } l_i(t) \not\in B^{d_{\hat M}}_{R+24\delta}(y)
	\end{equation} 
	 by the $\delta$-hyperbolicity of $\hat M$,  
	and Proposition 2.2 of Chapter 2 of \cite{CDP90}.  
	
   Let $t_{i,0}$ be the last time when $l_i(t)$ leaves the ball 
$\partial B^{d_{\hat M}}_{R+24\delta}(y)$.
Then the function $t'$ is defined on $[t_{i, 0}, \infty)$. 
		Moreover, we obtain $0 \leq t_{i, 0} \leq 2R + 24 \delta $ by using three points $y, l_i(0)\in 
	B^{d_{\hat M}}_{R}(y)$ and $l_i(t_{i, 0}) \in \partial  B^{d_{\hat M}}_{R+24\delta}(y)$
	and the triangle inequality. 
	Hence, the function $t'_i$ is always defined on $[2R + 24 \delta, \infty)$. 
	
	Now, $R \leq t'_i(t_{i, 0}) \leq R + 48 \delta$ by the condition \eqref{eqn:mi1-p2} 
	and the triangle inequality. 
	Since $d_{\hat M}( l_i(t), m_i(t'_i(t)))$ is within 
	$24\delta$, and $m_i$ is also an isometry,  
	we obtain 
	\begin{multline}
	 (t-t_{i, 0}) \leq d_{\hat M}(l_i(t_{i, 0}), y)+ 
d_{\hat M}(y, m_i(t'_i(t))) + d_{\hat M}(m_i(t'(t)), l_i(t))
\leq \\
(R+  24 \delta) + t'_i(t) + 24 \delta,  
	\end{multline}
by applying the triangle equalities to the chain of four points 
$l_{i}(t_{i, 0})$, $y$, $m_i(t'_i(t))$, and $l_i(t)$.
And 
by applying the triangle equalities to the chain of four points 
 $m_i(t'_i(t))$, $l_i(t)$, $l_{i}(t_{i, 0})$, and $y$, we obtain
 \begin{multline}  \label{eqn:mi2-p2} 
	t'_i(t) \leq 
d_{\hat M}(m_i(t'_i(t)), l_i(t)) + 
d_{\hat M}(l_i(t), l_i(t_{i, 0})) +
d_{\hat M}(l_i(t_{i,0}), y)  \\  \leq 
24 \delta + 
(t- t_{i, 0})+ R + 28\delta.
\end{multline} 
Combining the two, we obtain 
\[	 (t-t_{i, 0}) -R - 48 \delta \leq t'_i(t) 
	 \leq  (t-t_{i,0}) + R + 48\delta.
\]



By choice of a subsequence, 
we may assume 
$m_i$ converges to a ray $m$ from $x$ to $q$ 
since $\partial_\infty \hat M$ has the shadow topology. 
	(See Section 11.11 of \cite{DK2018}.)
 By \eqref{eqn:mi2-p2} and the Arzel\`a-Ascoli theorem, we may assume 
$t'_i(t) \ra t'(t)$ for $t\in [2R + 24 \delta, \infty)$ and $t_{i, 0} \ra t_0, t_0\in [0, 2R + 24 \delta]$ 
\red{for a function $t'$ defined on $[2R + 24 \delta, \infty)$}
up to a choice of a subsequence. 
(See Section \ref{sub:geoconv}.)  
	Hence, we obtain by \eqref{eqn:mi1-p2} 
	\[d_{\hat M}(l(t), m(t'(t))) \leq 24\delta\] 
	for $t\in [2R + 24 \delta, \infty)$ 
	Hence, $l$ ends at $q$ as $t \ra \infty$.
	
	Similarly, we can show that $l$ ends at $r$
	as $t\ra -\infty$.  
	\end{proof}


Let $X$ be a first countable Hausdorff space. 
Recall that a {\em  lower semi-continuous} function $f: X \ra \bR_+$ is 
a function satisfying
$f(x_0) \leq \liminf_{x \ra x_0} f(x)$ for each $x_0\in \hat M$. 
A lower semi-continuous function always achieves an infimum. 
(See \cite{Vinogradova} for details.)
Let $C > 0$. 
A function $f$ is {\em $C$-roughly continuous} if 
\[|\liminf_{x\ra x_0} f(x) - f(x_0)| \hbox{ and }
|\limsup_{x\ra x_0} f(x) - f(x_0)| < C \hbox{ for all } x_0 \in \hat M\] 
If $f$ is lower semi-continuous and satisfies 
$\limsup_{x\ra x_0} f(x) <  f(x_0) + C$ for all $x_0 \in \hat M$,
then it is $C$-continuous  since 
\[f(x_0) \leq \liminf_{x_i \ra x_0} f(x) \leq 
\limsup_{x_i \ra x_0} f(x)\] holds.  

\label{page-semiC}

			Let $p$ be a point of the ideal boundary $\partial_\infty \hat M$. 	
	We defined $\mathcal{R}_p$ to be the union of complete isometric geodesics
	in $\Uc \hat M$ mapping to complete isometric geodesics
	in $\hat M$ ending at $p$. 
	A {\em geodesic of $\mathcal{R}_p$} is one of these geodesics
	in $\Uc \hat M$ or $\hat M$. 
	Define a function \[f_q: \hat M \ra \bR_+ \hbox{ given by } 
	f_q(x) := d_{\hat M}\left(x, \pi_{\Uu \hat M} \left(\bigcup \mathcal{R}_q\right)\right), x \in \hat M.\] 
		
		Let $q \in \partial_\infty \hat M$. 
		The set of complete isometric geodesics ending at $q$ and passing a compact subset of
	$\hat M$ is closed under the convergences. 
	(See Section \ref{sub:geoconv}  of Part 1.) 
	A complete isometric geodesic $l$ realizes $f_q(x)$ for each $x \in \hat M$. 
	That is, for each $x$ in $\hat M$, there is a complete isometric 
	embedded geodesic $l$ in $\mathcal{R}_q$ where 
	$d_{\hat M}(x, y)$ for $y\in \pi_{\Uu \hat M} (l)$ 
	realizes the infimum.

	\begin{lemma} \label{lem:semicont-p2} 
   $f_q(x)$ is a lower semi-continuous function of $q$ and $x$ respectively. 
	\end{lemma} 
\begin{proof} 
Since $\partial \hat M$ is first countable by Section 11.11 of \cite{DK2018}, 
we need to worry about sequences only. 
Let $q_i$, $q_i\in \partial_\infty M$, be a sequence converging to $q$. 
Then $f_{q_i}(x)$ equals $d_{\hat M}(x, l_i)$ for a complete isometric geodesic $l_i$ ending at $q_i$. 
Since $l_i$ has a distance from $x$ bounded from above, it has 
a limiting geodesic $l_\infty$ up to a choice of subsequences. 
(See Section \ref{sub:geoconv}   of Part 1.)  
Since we have $l_i(t) \ra l_\infty(t)$ for each $t \in \bR$, we obtain 
\begin{equation}\label{eqn:linfty} 
\liminf_{i \ra \infty} f_{q_i}(x)= d_{\hat M}(x, l_\infty). 
\end{equation} 
 
By Lemma \ref{lem:geolim-p2}, $l_\infty$ ends at $q$. 
%
$l_\infty$ lifts to a geodesic in $\mathcal{R}_q$. 
Since $f_q(x) = d_{\hat M}(x, l)$ for some geodesic $l$ ending at $q$, 
and is the infimum value for all geodesics $l'$ in $\mathcal{R}_q$, 
$\liminf_{i\ra\infty} f_{q_i}(x) \geq f_q(x)$ by 
\eqref{eqn:linfty}. 

We can prove 
the lower semi-continuity with respect to $x$ similarly.  
	\end{proof}

\begin{figure}[ht!]
	\labellist
	\small\hair 2pt
	\pinlabel $y$ [l] at 245 255
	\pinlabel $x$ [r] at 240 255
	\pinlabel $q'$ [r]  at 265 450
	\pinlabel $\partial B_R^{d_{\hat M}}(y)$ [r] at 415 250
	\pinlabel $l_i$ [r] at 230 300
	\pinlabel $l$ [l] at 265 300
	\pinlabel $l'_i$ [l] at 220 190
	\pinlabel $l'''$ [l] at 290 115
	\pinlabel $z_i$ [r] at 220 105
	\pinlabel $z$ [l] at 265 98
	\pinlabel $q_i$ [r] at 187 49
	\pinlabel $q$ [l]  at 250 36
	\endlabellist
	\includegraphics[width=0.90\textwidth]{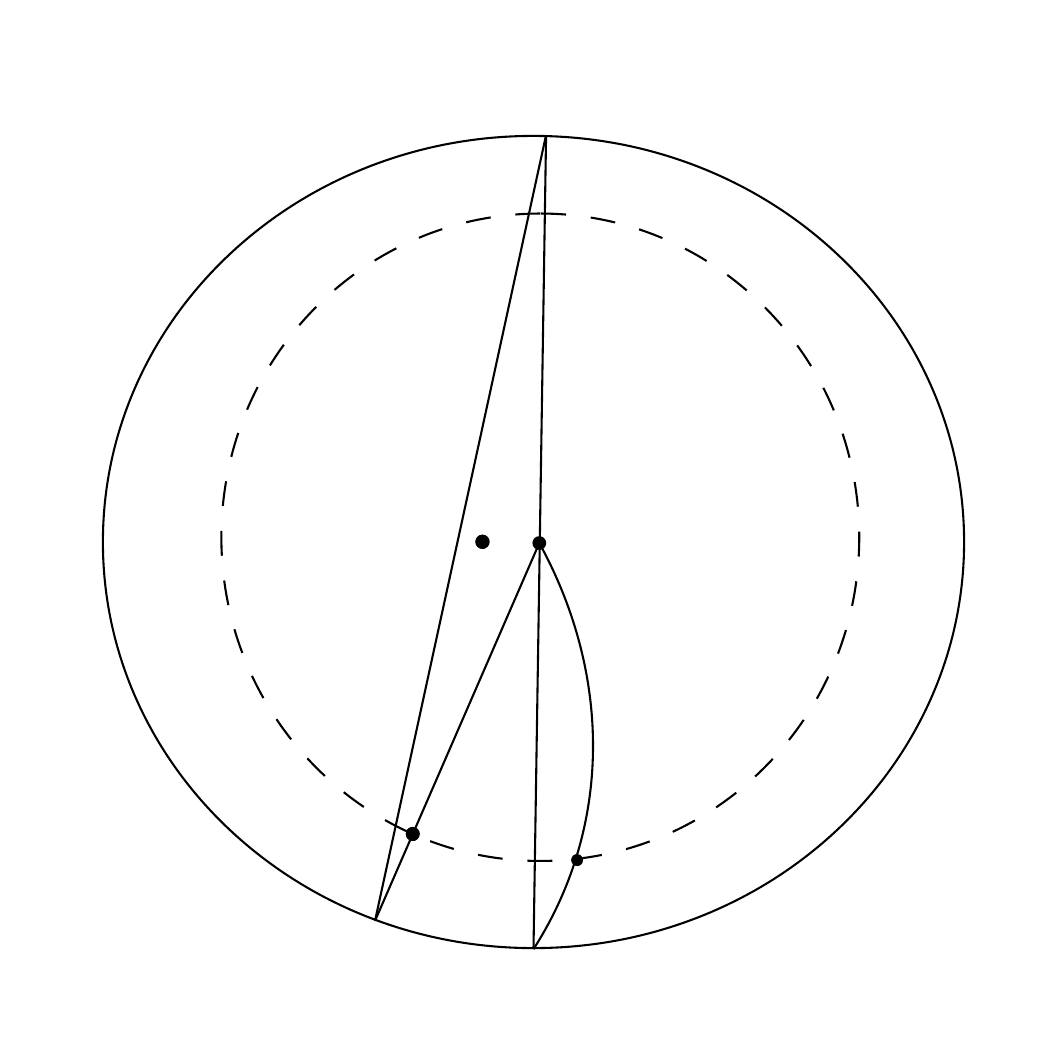}
	\caption[The proof of Lemma \ref{lem:Crough-p2}]{The proof of Lemma \ref{lem:Crough-p2} }
	\label{fig:converge-p2}
\end{figure}

Recall the quasi-geodesic constant from Lemma \ref{lem:twogeo}.  
	
	\begin{lemma}\label{lem:Crough-p2} 
	Let $C'$ be the quasi-geodesic constant. Let $x\in \hat M$. 
	Then $f_q(x)$ is a $C'$-roughly continuous function of $q$.
	\end{lemma} 
\begin{proof} 
Suppose that $q_i \in \partial_\infty \hat M$ converges to $q \in \partial_\infty \hat M$. Clearly, $\liminf_{i \to \infty} f_{q_i}(x) \geq f_q(x)$ since any isometric geodesic $l_i$ with endpoints $q_i$ within a $d_{\hat M}$-distance $\leq C''$ from $x$ for some $C'' > 0$ converges to an isometric geodesic ending at $q$ by the Arzel\`a-Ascoli theorem (see Section \ref{sub:geoconv}).


Let $l$ be as above, realizing $f_q(x)$, which is a complete isometric geodesic with endpoints $q$ and $q'$ in $\partial_\infty \hat M$. By Lemma \ref{lem:geotwoends-p2}, we have $q \ne q'$. We can find a complete isometric geodesic $l_i$ with endpoints $q_i \in \partial_\infty \hat M$ and $q'$ by Proposition 2.1 of Chapter 2 of \cite{CDP90}, where $q_i \to q$.

5	Let $l$ be as above realizing $f_q(x)$ which is 

	We claim that $l_i$ meets a fixed compact subset of $\hat M$: 
	We take a point $y$ on $l$ so that $d_{\hat M}(x, y) < f_q(x) + 1$. 
	Then we take an isometric geodesic $l'_i$ from $y$ to $q_i$ by Proposition 2.1 of Chapter 2 of \cite{CDP90}.
	Let $l''$ be a ray in $l$ from $y$ to $q'$. 
	By taking a subsequence, we obtain $l'_i \ra l'''$ to a ray $l'''$ from $y$. 
	Again, $l'''$ ends at $q$ by the shadow topology. 
Now, $l'_i$ is in a $24\delta$-neighborhood of $l'' \cup l_i$ by Proposition 2.2 of 
\cite{CDP90}. 

Since $q$ and $q'$ are distinct, the respective rays from $y$ ending 
at $q$ and $q'$ do not have a bounded Hausdorff distance 
by Lemma \ref{lem:nonasym-p2}. 
Let $R$ be a large number so that 
\begin{itemize} 
\item $\partial B^{d_{\hat M}}_R(y) \cap (l''' - N_{24\delta}(l''))$ 
contains a point $z$, and 
\item  $B^{d_{\hat M}}_\eps(z)$ is disjoint from $N_{24\delta}(l'')$
for sufficiently small $\eps$, $\eps> 0$. 
\end{itemize} 
For sufficiently large $i$, there is a sequence $z_i$ for 
 $z_i \in l'_i \cap \partial B^{d_{\hat M}}_R(y)$ where $z_i \ra z$. 
Hence, $z_i \not\in N_{24\delta}(l'')$ for sufficiently large $i$. 
Then $z_i$ is in a $24\delta$-neighborhood of $l_i$ by the
conclusion of the above paragraph. 
Hence, we obtain $d_{\hat M}(\partial B^{d_{\hat M}}_R(y), l_i) \leq 24 \delta$ and 
$l_i $ meets $B^{d_{\hat M}}_{R+ 24\delta +1}(y)$ for sufficiently large $i$. 

	Therefore, the sequence of $l_i$ reparametrized with 
$l_i(0) \in B^{d_{\hat M}}_{R+ 24\delta +1}(y)$ 
	converges to a complete isometric
	geodesic $l'$ with the same endpoints as $l$ up to a choice of a subsequence $j_i$ 
	by Lemma \ref{lem:geolim-p2}. 
Because of 
\begin{itemize} 
\item $f_{q_i}(x) \leq d_{\hat M}(l_i, x)$ and 
\item  the fact that  
\begin{equation} 
d_{\hat M}(l_{j_i}, x) \ra d_{\hat M}(l', x) \leq  d_{\hat M}(l, x) + C'
\hbox{ implies } 
\limsup_{i\ra \infty} f_{q_i}(x) \leq f_q(x) +  C' 
\end{equation}
by Lemma \ref{lem:twogeo}  of Part 1, 
\end{itemize} 
	we obtain $\limsup_{i \ra \infty} f_{q_i}(x) \leq f_q(x) + C'$.  	
%
Lemma \ref{lem:semicont-p2} completes the proof
by the fact in Page \pageref{page-semiC}. 
\end{proof}

	The set $\bigcup_{q\in \partial_\infty \hat M} \bigcup \mathcal{R}_q$ is a closed set in $\Uu \hat M$ 
since it equals $\Uc \hat M$. 

	\begin{theorem}[Horospherical geodesic rough density] \label{thm:Ucdense-p2} 
		Let $M$ be a compact manifold with a covering map $\hat M \ra M$
		with a deck transformation group $\Gamma_M$. 
		Suppose that $\Gamma_M$ is word-hyperbolic.
		Let $p \in \partial_\infty \hat M$. 

		Then every point $x$ of $\hat M$ is in a bounded distance from 
		a complete geodesic of $\mathcal{R}_p$ for a constant $C, C> 0$, and
		$\pi_{\Uu \hat M} (\mathcal{R}_p)$ is $C$-dense in $\hat M$. 
	\end{theorem} 
	\begin{proof} 
	For each $x \in \hat M$, we claim that $f_q(x) \leq C_x$ for every $q$ 
	for a constant $C_x > 0$ 
	depending on $x$ since $\partial_\infty \hat M$ is compact: 
	If not, we can find a sequence $q_i$ in $\partial_\infty \hat M$ 
	so that 
	a sequence of rays $r_i$ from $x_0$ to $q_i$ 
	converges to a ray $r_\infty$ from $x_0$ to a point 
	$q_\infty$ of $\partial_\infty \hat M$  
	so that $f_{q_i}(x) \ra \infty$. 
	(See Lemma 11.77 of \cite{DK2018}.)
 We have a contradiction by Lemma \ref{lem:Crough-p2} 
 since $f_{q_\infty}(x)$ is finite.  
	
	We define 
	$f: \hat M \ra \bR_+$ by 
	$f(x) = \sup_{q \in \partial_\infty \hat M} f_q(x)$. 
   Since $f_q$ is lower semi-continuous function of $x$ as well, 
   $f$ is a lower semi-continuous function of $x$
	by the standard theory. (See \cite{Vinogradova}.)
	Since $\Gamma_M$ acts on $\partial_\infty \hat M$, 
	$f$ is $\Gamma_M$-invariant. 
	
Now, $f$ induces a lower semi-continuous function $f': M \ra \bR_+$.
Since $f'$ is lower semi-continuous, 
there is a minimum point $x_0 \in \hat M$ under $f$. 

In other words, 
for $x_0$, $f_{q'}(x_0) < C'$ for a constant $C'> 0$ independent of $q'$, $q'\in \partial_\infty \hat M$.  
Hence, $f_q(\gamma(x_0))= f_{\gamma^{-1}(q)}(x_0) < C'$ for any $\gamma \in \Gamma_M$. 
For every point $x$ in $\hat M$, 
$f_q(x) \leq f_q(\gamma(x_0)) + d_{\hat M}(x, \gamma(x_0))$
by the triangle inequality. 
Since a choice of $\gamma$ can bound the second term, 
it follows that $f_q(x) < C''$ for a constant $C''> 0$
for every $x \in \hat M$. 
%
%
%
%
%
\end{proof} 

We remark that we cannot find this type of elementary results in the literature.

\section{Decomposition of the vector bundle over $M$ 
	and sections of the affine bundle. } \label{sec:neutral-p2} 

\red{For the main idea here, see the outline in the introduction. }



\subsection{Modifying the developing sections} 
Let $M$ be an FS submanifold of closed complete affine manifold $N$
with a cover $\hat M \subset \mathds{A}^n$.
We assume $\partial M$ is convex. 
$N$ has the developing map $\dev: \tilde N \ra \mathds{A}^n$, which we may consider as the identity map. 
There is restricted developing map $\dev: \hat M \ra \mathds{A}^n$. 
We may consider this as the inclusion map.
Let $\rho':\Gamma_M \ra \Aff(\mathds{A}^n)$ denote the associated affine 
holonomy homomorphism. Let $\Gamma$ denote the image. 

There is a covering map $\hat M \ra M$ inducing 
the covering map $p: \Uu \hat M \ra \Uu M$. 
The deck transformation group equals $\Gamma_M$. 

We form $\mathds{A}^n_{\rho'}$ as the quotient space of 
 $\Uc \hat M \times \mathds{A}^n$ and $\Gamma_M$ acts by the  action
 twisted by $\rho'$ 
 \[\gamma ((x, \vec{v}), y)= ((\gamma(x), D\gamma(\vec{v})), \rho'(\gamma)(y)) \hbox{ for } \gamma \in \Gamma_M\]
 for the map $D\gamma:\Uu M \ra \Uu M$ induced by the differential of $\gamma$. 
We have a projection $\hat \Pi_{\mathds{A}^n}: \Uc \hat M \times \mathds{A}^n \ra \mathds{A}^n$ 
inducing \[\Pi_{\mathds{A}^n}: (\Uc \hat M \times \mathds{A}^n)/ \Gamma_M \ra \mathds{A}^n/\Gamma, \]
and another one $\hat p_{\Uc M}: \Uc \hat M \times \mathds{A}^n \ra \Uc \hat M$ 
inducing 
\begin{equation}\label{eqn:piUM}  
p_{\Uc M}: (\Uc \hat M\times \mathds{A}^n)/\Gamma_M \ra \Uc M.
\end{equation}
We define a section
$\hat s: \Uc \hat M \ra \Uc \hat M \times \mathds{A}^n$ where
\begin{equation}\label{eqn:defines-p2}  
\hat s((x, \vec{v}))= ((x, \vec{v}), \dev(x)), (x, \vec{v}) \in \Uc \hat M. 
\end{equation} 
Since \[\hat s(g(x, \vec{v})) = (g(x, \vec{v}), \rho'(g)\circ \dev(x))
\hbox{ for } (x, \vec{v})\in \Uc \hat M, g \in \Gamma_M,\] 
$\hat s$ induces a section $s: \Uc M \ra \mathds{A}^n_{\rho'}$.
We call $s$ the {\em section induced by a developing map}. 
(See Goldman \cite{G88})


There is a flat connection $\hat \nabla$ on the fiber bundle $\Uc \hat M \times \mathds{A}^n$ 
over $\Uc \hat M$ induced from the product structure.
This induces a flat connection $\nabla$ on $\mathds{A}^n_{\rho'}$. 
Let $V_\phi$ denote 
the vector field on $\Uc M$ along the geodesic flow $\phi$ 
of $\Uu M$. 
The space of fiberwise vectors on  
$\Uc \hat M \times \mathds{A}^n$ 
equals $\Uc \hat M \times \bR^n$.
Hence, the vector bundle associated with the affine bundle 
$\mathds{A}^n_{\rho'}$ is $\bR^n_{\rho}$. 
Let 
 $\llrrV{\cdot}_{\mathds{A}^n_{\rho'}}$ 
 denote the fiberwise metric 
 induced from $\llrrV{\cdot}_{\bR^n_\rho}$. 
Now $\Uu \hat M$ have the Riemannian metric $d_{\Uu \hat M}$ invariant 
under the action $\Gamma_M$. 

\begin{itemize} 
	\item Let $d_{\mathrm{fiber}}$ denote the fiberwise distance metric on 
	$\Uc \hat M \times \mathds{A}^n$ 
	from the fiberwise norm $\llrrV{\cdot}_{\mathds{A}^n_{\rho'}}$.
	\item Let $d_{\tilde N, {\mathrm{fiber}}}$ 
	denote the fiberwise distance metric on
	$\Uc \hat M\times \mathds{A}^n$ with the second factor given the metric $ \dN$. 
\end{itemize} 
Both fiberwise metrics 
are invariant by the $\Gamma_M$-action twisted with $\rho'$.  




\begin{theorem} \label{thm:neutral-p2} 
	Let $M$ be an FS submanifold of a complete affine manifold 
	with convex $\partial M$. 
	Suppose that $\Gamma_M$ is word-hyperbolic. 
	Suppose that $M$ has a partially hyperbolic linear holonomy homomorphism with respect to a Riemannian metric on $M$ in the bundle sense.  

Then there is a section $s_\infty$ homotopic to the developing section $s$ in the $C^{0}$-topology
with the following conditions\/{\rm :} 
\begin{itemize} 
\item $\nabla_{V_\phi} s_\infty(x)$ is in $\bV_0(x)$ for each $x \in \Uc M$.
\item $d_{\tilde N_{\rho'}}(s(x), s_{\infty}(x))$ is uniformly bounded for every $x \in \Uc M$
with respect to $d_{\mathrm{fiber}}$. 
\item $ \dN(\hat \Pi_{\mathds{A}^n}\circ \hat s(x), \hat \Pi_{\mathds{A}^n}\circ \hat s_{\infty}(x))$ is uniformly bounded for $x \in \Uc \hat M$ with respect to $\dN$. 
\item $\hat \Pi_{\mathds{A}^n} \circ \hat s_\infty: \Uc \hat M \ra \mathds{A}^n$ is $\dN$-parallelly homotopic to $\dev\circ \pi_{\Uu \hat M} $
 for the metric $ \dN$. 
\item $\hat \Pi_{\mathds{A}^n} \circ \hat s_\infty:\Uc \hat M \ra \mathds{A}^n$ is 
a quasi-isometric equivariant embedding 
with respect to $d_{\Uu \hat M}$ and $ \dN$. 
\end{itemize} 
\end{theorem} 
\begin{proof} 
We define as in \cite{GLM09} 
\[s_\infty := s + \int^\infty_0 (D\Phi_t)_\ast (\nabla^-_{V_\phi} s) dt - 
\int^\infty_0 (D\Phi_{-t})_\ast (\nabla^+_{V_\phi} s) dt.\]  
These integrals are  bounded in $\llrrV{\cdot}_{\bR^n_\rho}$ 
since the integrands are exponentially decreasing 
in the fiberwise metric at $t\ra \infty$. (See Definition \ref{defn:phyp} of 
Part 1.) 
Then it is homotopic to $s$ since we can replace $\infty$ by $T, T> 0$
and let $T \ra \infty$. 
Also $\nabla_{V_\phi}(s_\infty) \in \bV_0$ as in 
the proof of Lemma 8.4 of \cite{GLM09}.
The continuity of $s_\infty$ follows since we have exponential decreasing sums.
This proves the first two items. 

Let $F$ denote a compact fundamental domain of $\Uu \hat M$. 
Since the image of $\hat s(F)\cup \hat s_{\infty}(F)$ is 
a compact subset of $\mathds{A}^n_{\rho'}$, we obtain 
\[d_{{\mathrm{fiber}}}(\hat s(x), \hat s_\infty(x)) < C', 
x\in F\cap \Uc\hat M  \hbox{ for a constant } C'.\] 
By the $\Gamma_M$-invariance, we obtain 
\begin{equation} \label{eqn:tildes} 
d_{{\mathrm{fiber}}}(\hat s(x), \hat s_\infty(x)) < C'
\hbox{ for }x\in \Uc \hat M. 
\end{equation} 
For some uniform constant $C''$ and $C'''$,
we have
\begin{equation}\label{eqn:ssinfty}
 \dN(\hat \Pi_{\mathds{A}^n} \circ\hat s(x), \hat \Pi_{\mathds{A}^n} \circ\hat s_\infty(x))
\leq 
C'' d_{{\mathrm{fiber}}}(\hat s(x), \hat s_\infty(x)) + C''', 
x\in \Uc\hat M. 
\end{equation}
This follows since we can consider $x$ to be in a compact fundamental domain 
$F$ of $\hat M$ and $\hat s$ is bounded on $F$, and we use the equivariance of 
the metrics $\dN$ and $d_{\mathrm{fiber}}$.  
The third item follows from this. 
The fourth item follows by Lemma \ref{lem:parallel-p2}.


The final item follows since $s_\infty$ is a continuous map: 
Since $\hat M$ is a Riemannian manifold, so is the sphere bundle 
$\Uu \hat M$. 
Each compact subset of $\Uc\hat M$ goes to a compact subset of 
$\mathds{A}^n$. We can cover a compact fundamental domain of $\Uc \hat M$ 
by finitely many compact convex normal balls $B_i$ in $\Uu \mathds{A}^n$
for $i=1, \dots, f$.  
We define $K_i:=\Uc \hat M \cap B_i$, $i=1, \dots, f$, 
which needs not be connected.
Then we obtain
\begin{equation}\label{eqn:diamB} 
 \dN\hbox{-diam}(\rho'(g)\circ \hat \Pi_{\mathds{A^{n}}}\circ \hat s_{\infty}(K_i)) \leq C
\hbox{ for each } g \in \Gamma_M \hbox{ and } i
\end{equation} 
for $C$ independent of $i$ and $g$. 

Let $L$ be the $d_{\Uu \hat M}$-length of a path $\gamma$ to $\Uc \hat M$. 
We can break $\gamma$ into paths $\gamma_i$, $i=1, \dots, L/\delta'$ of length smaller than the Lebesgue number $\delta'>0$
for the covering $\{B_i\}$. 
Now, $\hat \Pi_{\mathds{A}^n} \circ \hat s_{\infty}\circ \gamma_i$ 
goes into a path in $\mathds{A}^n$ homotopic to 
a path whose length is bounded above by $C$. 
Hence, the image of $\gamma$ is contained in
 a path homotopic to a union of paths 
whose lengths are bounded above by $C$. 
Hence, 
\begin{equation} \label{eqn:upperB-p2} 
 \dN( \hat \Pi_{\mathds{A}^n} \circ\hat s_\infty(x),  \hat \Pi_{\mathds{A}^n} \circ\hat s_\infty(x')) 
\leq \frac{C}{\delta'} d_{\Uu \hat M}(x, x') \hbox{ for } x, x' \in \Uc \hat M. 
\end{equation} 
Hence, $\hat \Pi_{\mathds{A}^n} \circ\hat s_\infty$  is a coarse Lipschitz map. 

We have $\hat \Pi_{\mathds{A}^n} \circ\hat s= \dev\circ \pi_{\Uu \hat M} |\Uc \hat M$ by \eqref{eqn:defines-p2}. 
By the fourth item proved above, we obtain a lower bound on the first term of \eqref{eqn:upperB-p2} 
by 
\[ \dN(\pi_{\Uu \hat M} (x), \pi_{\Uu \hat M} (x')) - 2C'\] 
for the constant $C'$ for the $\dN$-parallel homotopy.
By Lemma \ref{lem:piUUM}  of Part 1, 
$\dev \circ  \pi_{\Uu \hat M}$ is a quasi-isometric embedding. 
Hence, we obtain quasi-isometric embedding $\hat \Pi_{\mathds{A}^n} \circ\hat s_\infty$. 
\end{proof}

%

\subsection{Generalized stable subspaces} 
At each point of $x$ of $\Uc \hat M$, there are vector subspaces
to be denoted by  
$\bV_+(x)$, $\bV_0(x)$, and $\bV_-(x)$ 
respectively corresponding to $\bV_+(p(x))$, $\bV_0(p(x))$, and 
$\bV_-(p(x))$ under the covering 
$\Uc \hat M \times \bR^n \ra \bR^n_{\rho}$. 
Since these are parallel under $\hat \nabla$, 
they are invariant under the geodesic flow $\Phi$ on $\Uc \hat M$
lifting $\phi$.  

Let $\hat s_{\infty}: \Uc \hat M \ra \mathds{A}^n$ be a continuous lift of $s_{\infty}$. 
An affine subspace of $\mathds{A}^n$ parallel to $\bV_0(x, \vec{v})$ passing $\hat s_\infty(x,\vec{v})$ is said to be 
a {\em neutral subspace} of $(x, \vec{v})$.

The first item of Theorem \ref{thm:neutral-p2} implies: 
\begin{corollary} \label{cor:neutralgeo-p2} 
$\hat \Pi_{\mathds{A}^n} \circ \hat s_{\infty}$ restricted to each ray 
$\phi_t(y)$, $t \geq 0$, on $\Uc \hat M$ lies on 
a neutral affine subspace parallel to $\bV_0(\phi_t(y))$ 
independent of $t$.  
\end{corollary} 


From now on, 
\[l_{y} := \{ \phi_{t}(y)| t \geq 0\} \hbox{ for } y \in \Uc \hat M\]
will denote a ray starting from $y$ in $\Uc \hat M$. 
The image $\hat \Pi_{\mathds{A}^n} \circ  \hat s_{\infty}(l_y)$ is in a neutral affine subspace of dimension equal to 
$\dim \bV_0$ by Corollary \ref{cor:neutralgeo-p2}. 
We denote it by $A^0_y$ or $A^0_{l_y}$. 

Since $\dev = \hat \Pi_{\mathds{A}^n} \circ  \hat s$, and 
$\dev \circ \gamma = \rho'(\gamma)\circ \dev$ for $\gamma \in \Gamma_M$, 
we have by an equivariant homotopy
\begin{equation} \label{eqn:sgamma-p2}
\hat \Pi_{\mathds{A}^n} \circ  \hat s_{\infty}\circ \gamma = 
\rho'(\gamma)\circ \hat \Pi_{\mathds{A}^n} \circ  \hat s_{\infty}
\hbox{ for } \gamma \in \Gamma_M.
\end{equation} 
By \eqref{eqn:sgamma-p2}, we obtain
\begin{equation} \label{eqn:sgamma2} 
\rho'(\gamma)(A^0_{l_y})=\rho'(\gamma)(A^0_y) = A^0_{\gamma(y)} = \rho'(\gamma)(A^0_{l_y}) = A^0_{\gamma(l_z)}
\end{equation} 
by Corollary \ref{cor:neutralgeo-p2} and the definition of $A^0_y$. 



Finally, since $s_\infty$ is continuous, the $C^0$-decomposition implies 
that $x \mapsto A^0_x$ is a continuous function. 
Hence, in the Hausdorff metric sense, we obtain
\begin{equation} \label{eqn:ziz-p2} 
A^0_{z_i} \ra A^0_z \hbox{ if } z_i \ra z \in \Uc \hat M. 
\end{equation}

Denote by $V_{+, y}$ the vector subspace parallel to the lift of $\mathds{V}_+$ at $y$. 
Similarly, the $C^0$-decomposition property also implies 
\begin{equation} \label{eqn:ziz2-p2} 
\bV_{e}(z_i) \ra \bV_{e}(z) \hbox{ if } z_i \ra z \in \Uc \hat M \hbox{ for } e= +, -.  
\end{equation}



We will denote for any $q \in \Uu \hat M$ as follows: 
\begin{itemize} 
	\item $A^{e}_q$  
	the affine subspace containing $s_\infty(q)$ and  
	all other points in directions of $\bV_e(q)$ from it for $e=+, -$.
	\item $A^{0 e}_{q}$ 
	the affine subspace containing $A^0_q$ and all other points in 
	directions of $\bV_e(q)$ from points of $A^0_q$ for $e= +, -$. 
\end{itemize} 
We will call $A^{0+}_q$ a {\em generalized unstable affine subspace} 
	and $A^{0-}_q$ the {\em generalized stable affine subspace}.
Again, we have by \eqref{eqn:ziz-p2} and \eqref{eqn:ziz2-p2} 
\begin{equation} \label{eqn:ziz3} 
	A^{0 e}_{z_i} \ra A^{0 e}_{z}   \hbox{ if } z_i \ra z \in \Uc \hat M \hbox{ for } e= +, -.  
\end{equation}   

\begin{figure}[ht!]
	\labellist
	\small\hair 2pt
	\pinlabel $\dev$ [l] at 180 195
	\pinlabel $y_{i+1}$ [t] at 28 133
	\pinlabel $y_i$ [t] at 45 136
	\pinlabel $y_{i-1}$ [t] at 65 140
	\pinlabel $z_{i+1}$ [t] at 27 113
	\pinlabel $z_i$ [t] at 45 108
	\pinlabel $z_{i-1}$ [t] at 65 103
	\pinlabel $\gamma_i$ [t] at 90 130
	\pinlabel $\rho'(\gamma_i)$ [t] at 310 116
	\pinlabel $F$ [t] at 135 95
	\endlabellist
	\includegraphics[width=1.00\textwidth]{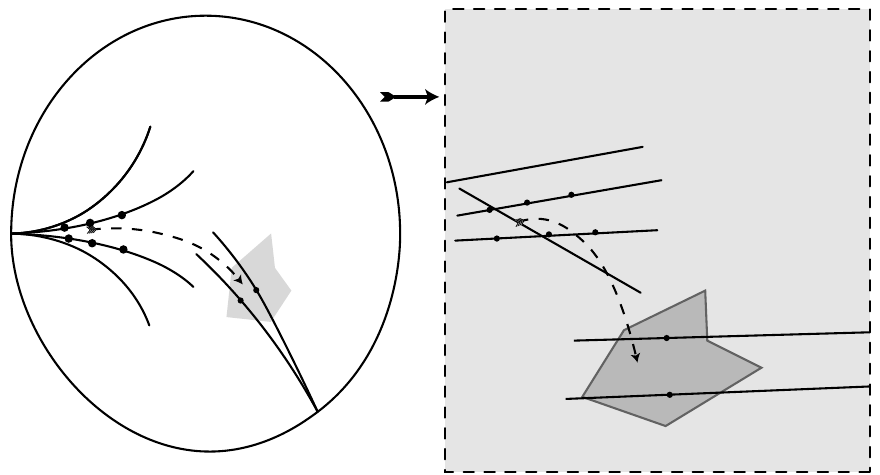}
	\caption[The proof of Proposition \ref{prop:Finalpart-p2}]{The proof of Proposition \ref{prop:Finalpart-p2}}
	\label{fig:pullingback}
\end{figure}



\begin{proposition} \label{prop:Finalpart-p2} 
	Assume that $M$ is an FS submanifold of a complete affine manifold $N$ with word-hyperbolic fundamental group 
	$\Gamma =\Gamma_M$.
Let $p$ be a point of $\partial_\infty \hat M$. 
	Let $y$ be a point of $\mathcal{R}_p$ on a complete isometric geodesic $l_y$ ending at $p$. 
		Suppose that $M$ has a partially hyperbolic linear holonomy homomorphism with respect to a Riemannian metric on $M$ in the bundle sense.  

	Then for every ray $l_z$ in 
	$\mathcal{R}_p$ for $z \in \Uc \hat M$, 
	$\hat \Pi_{\mathds{A}^n} \circ\hat s_\infty(l_z)$ 
	is in single subspace $A^{0-}_{l_y}$. 
	That is, $A^{0-}_{l_z} = A^{0-}_{l_y}$ for every such $l_z$ in $\mathcal{R}_p$, 
	and $\hat \Pi_{\mathds{A}^n} \circ\hat s_\infty(\mathcal{R}_p) \subset A^{0-}_{l_y}$. 
\end{proposition} 
\begin{proof} 
(I) We choose two sequences of points of $\Uu \hat M$ converging towards the ideal point $p$ and find a sequence of deck transformation pulling back to a fundamental domain:  
	Under $\pi_{\Uu \hat M} $, $l_y$ and $l_z$ respectively go to complete geodesics ending at $p$ in the forward direction. 
	Since $\bV^{0e}_{\phi_t(y)}$ are parallel under the flow, 
	$A^{0e}_{\phi_t(y)}$ are independent of $t$ for $e=+, -$.  
Similarly, $A^{0e}_{\phi_t(z)}$ are independent of $t$ for $e=+, -$. 
	
	Choose $y_i \in l_y$ so that $y_i = \phi_{t_i}(y)$, and 
	$z_i \in l_z$ so that $z_i = \phi_{t_i}(z)$ where $t_i \ra \infty$ as $i \ra \infty$. 
	Denote 
	\[ y'_i:= \hat \Pi_{\mathds{A}^n} \circ\hat s_\infty(y_i)
	\hbox{ and } z'_i := \hat \Pi_{\mathds{A}^n} \circ \hat s_\infty(z_i) \hbox{ in } \mathds{A}^n. \]
	We obtain that  
	$d_{\Uu \hat M}(y_i, z_i) < R$
	for a uniform constant $R$ 
	by Lemma 11.75 and Theorem 11.104 of \cite{DK2018} 
	since two bordifications of 
	$\hat M$ agree.
	Since $\hat \Pi \circ \hat s_\infty$ is $\dN$-parallelly homotopic to 
	$\dev \circ \pi_{\Uu \hat M} $ by Theorem \ref{thm:neutral-p2}, 
	we obtain
	\begin{equation}\label{eqn:yizi-p2}  
		 \dN(y'_i, z'_i) < R' 
	\end{equation} 
	for a constant $R'> 0$.
	
	
Since $M$ is compact, 
	$\gamma_i(y_i)$ is in a compact fundamental domain $F$ of $\Uc \hat M$
	for an unbounded sequence $\gamma_i$, $\gamma_i \in \Gamma_M$. 
	$\rho'(\gamma_i)(y'_i)$ is in a compact subset of $\mathds{A}^n$
	for $y_i' = \pi_{\Uu \hat M}  \circ \hat s_{\infty}(y_i)$. 
	Choosing  a subsequence, we may assume without loss of generality 
	\begin{multline} \label{eqn:yinfty-p2} 
		\gamma_i(y_i) \ra y_\infty \hbox{ for a point } y_\infty \in F
		\hbox{ and } \\
		\rho'(\gamma_i)(y'_i) \ra y'_\infty \hbox{ for a point } y'_\infty \in \mathds{A}^n.
	\end{multline}

	\begin{figure}[t!]
		\centering
		\labellist
		\small\hair 2pt
		\pinlabel $y'_{i+1}$ [t] at 26 132
		\pinlabel $y'_i$ [t] at 54 133
		\pinlabel $y'_{i-1}$ [t] at 91 135
		\pinlabel $z'_{i+1}$ [t] at 28 165
		\pinlabel $z'_i$ [t] at 52 165
		\pinlabel $z'_{i-1}$ [t] at 73 166
		\pinlabel $\rho'(\gamma_i)$ [t] at 101 120
		\pinlabel $A^{0-}_{l_y}$ [t] at 115 150
		\pinlabel $A^{0-}_{l_z}$ [t] at 120 170
		\pinlabel $\bV^+(y_i)$ [t] at 58 189
		\pinlabel $\bV^+(y_{i+1})$ [t] at 30 189
		\pinlabel $\bV^+(y_{i-1})$ [t] at 107 189 
		\pinlabel $F$ [t] at 140 90
		\endlabellist
		\includegraphics[width=0.90\textwidth]{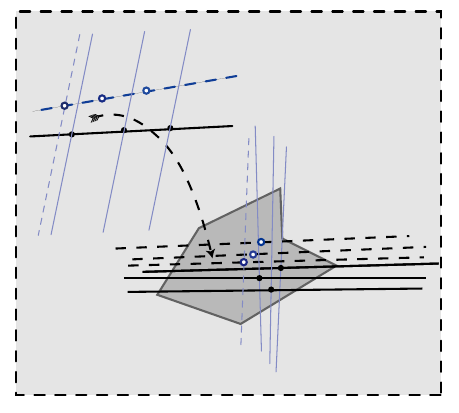}
		\caption{A close-up of the proof of Proposition \ref{prop:Finalpart-p2}.}
		\label{fig:pullingbackII-p2}
		
	\end{figure}

	Since $\gamma_i$ is an isometry of $d_{\hat M} =  \dN| \hat{M}\times \hat{M}$, 
	\eqref{eqn:yizi-p2} shows  
	\begin{equation} \label{eqn:pullback}
		 \dN(\rho'(\gamma_i)(y'_i), 
		\rho'(\gamma_i)(z'_i)) < R'
	\end{equation} 
	as $i \ra \infty$
	for a constant $R'> 0$. 
	Hence, we may assume without loss of generality that 
	\begin{multline} \label{eqn:yinfty2-p2} 
		\gamma_i(z_i) \ra z_\infty \hbox{ for a point } z_\infty \in \Uc \hat M
		\hbox{ and } \\
		\rho'(\gamma_i)(z'_i) \ra z'_\infty \hbox{ for a point } z'_\infty \in \mathds{A}^n.
	\end{multline} 
	%
	%
	%
	%
	
(II) Now we choose the affine subspaces that we need: 
	By Corollary \ref{cor:neutralgeo-p2}, 
    neutral affine subspaces $A^0_{l_y}$ and $A^0_{l_z}$ 
	contain $\hat \Pi_{\mathds{A}^n}(\hat s_{\infty}(y))$ and 
	$\hat \Pi_{\mathds{A}^n}(\hat s_{\infty}(z))$ 
	in $\mathds{A}^n$ respectively. 
	Since the sequence consisting of 
	the $d_{\hat M}$-distances between $\gamma_i(z_i)$ and $\gamma_i(y_i)$ for all $i$ 
	is uniformly bounded above, 
	\eqref{eqn:ziz-p2}, \eqref{eqn:yinfty-p2}, and \eqref{eqn:yinfty2-p2} 
	imply that the sequence of 
	the $d_{\mathcal{AG}_k(\bR^n)}$-distances between 
	\begin{equation} \label{eqn:Hdist}
		A^{0}_{\gamma_i(z_i)} = \rho'(\gamma_i)(A^{0}_{l_z}) \hbox{ and }
		A^{0}_{\gamma_i(y_i)} = \rho'(\gamma_i)(A^{0}_{l_y})
	\end{equation} 
	is bounded above.
	Also, the sequence of the $d_{\mathcal{AG}_k(\bR^n)}$-distances between 
	\begin{equation} \label{eqn:Hdist2-p2} 
		A^{0e}_{\gamma_i(z_i)} = \rho'(\gamma_i)(A^{0e}_{l_z}) \hbox{ and }
		A^{0e}_{\gamma_i(y_i)} = \rho'(\gamma_i)(A^{0e}_{l_y}) \hbox{ for } e = +, -,
	\end{equation} 
	is bounded above. 
	
	Let $\llrrV{\cdot}_E$ denote the norm of 
	the Euclidean metric $d_E$ on $\mathds{A}^n$.

(III)	We claim that $A^{0-}_{l_z}$ is affinely parallel to $A^{0-}_{l_y}$: 
	Suppose not. Then there is a vector $\vec{w}$ at $z$ 
parallel to $A^{0-}_{l_z}$ 
	not parallel to $A^{0-}_{l_y}$. 
Then  $\vec{w}$ has a nonzero component $\vec{w}_+$ in $\bV^+(y_i)$
using the decomposition $\bV_+(y_i) \oplus \bV_0(y_i)\oplus \bV_-(y_i))$  
of $\bV(y_i)$ at $y_i$. 
Working with $y_i$, we obtain that 
the sequence of norms under $\llrrV{\cdot}_\rho$ of  $\rho(\gamma_i)(\vec{w}_+)$ becomes 
	infinite by the condition (iii)(a) of the partial hyperbolicity
	in Definition \ref{defn:phyp}  of Part 1. 
	Since $\gamma_i(y_i)$ is in a compact fundamental domain
	$F$ of $\Uc \hat M$, 
	$\llrrV{\cdot}_\rho$ is uniformly equivalent to the Euclidean norm 
	$\llrrV{\cdot}_E$ associated with $d_E$. 
	Hence, \[\{\llrrV{\rho(\gamma_i)(\vec{w}_+)}_E \} \ra \infty.\] 
	Moreover, by the domination condition (iii)(c) in Definition \ref{defn:phyp}  of Part 1, 
	we obtain that the sequence of 
	directions of $\rho(\gamma_i)(\vec{w})$ converges to 
	that of $\rho(\gamma_i)(\vec{w}_+)$ of $\bV_+(y_\infty)$ under $\llrrV{\cdot}_E$
	up to a choice of a subsequence.  
	
	Also, 
\begin{equation}\label{eqn:disj-p2} 
\bV_+(z_i) \cap (\bV_-(y_i)\oplus \bV_0(y_i)) = \{0\}:
\end{equation}
Suppose not.  We work with $y_i$, and the sequence of 
the images of any nonzero vector $\vec{v}$ in the intersection 
under $\rho(\gamma_i)$ cannot dominate that of 
$\vec{w}_+$  as described in (iii)(c) of Definition \ref{defn:phyp}.
The sequence of $\llrrV{\cdot}_E$-norms of the images of $\vec{w}$ under $\rho(\gamma_i)$ is 
compatible with that of $\vec{w}_+$ again by the same condition. 
The sequence of 
	$\llrrV{\cdot}_E$-norms of the images  under $\rho(\gamma_i)$ of $\vec{v}$ in $\bV_+(z_i)$ cannot dominate those of $\vec{w}$. 
This is absurd since  $\vec{w} \in \bV^{0-}(z_i)$. 

	Hence, every nonzero  vector $\vec{w'}$ in $\bV_+(z_i)$ has a nonzero component parallel to $\bV_+(y_i)$
	under the decomposition $\bV_+(y_i) \oplus \bV_0(y_i) \oplus \bV_-(y_i)$. 

Since $\dim \bV_+(z_i) = \dim \bV_+(y_i)$, we can find a vector $\vec{w}'$ in $\bV_+(z_i)$ with the nonzero component equal to the above vector $\vec{w}_+$ in $\bV_+(y_i)$ by \eqref{eqn:disj-p2}. Hence, the sequence of angles between directions of $\rho(\gamma_i)(\vec{w}')$ and directions of $\rho(\gamma_i)(\vec{w}_+)$ goes to zero 
as $i \to \infty$ by the condition (iii)(c) of Definition \ref{defn:phyp} of Part 1.

The sequence of angles between $\rho(\gamma_i)(A^{0-}_{l_z})$ containing $z'_i$ and $\rho(\gamma_i)(\bV^+(z_i))$ over $z'_i$ converges to zero as $i \to \infty$
by the conclusion of the above paragraph and the fourth paragraph above. 
This contradicts our partial hyperbolic condition (Definition \ref{defn:phyp} of Part 1) since $\{\gamma_i(z_i)\}$ is convergent to a point of $\Uc \hat M$ and the angle between independent $C^0$-subbundles over a compact manifold has a positive lower bound.

	%
	%
	
(IV)	Finally, we show that $A^{0-}_{l_z} = A^{0-}_{l_y}$: Suppose not. 
	Let $\vec{v}$ denote the vector in the direction of $\bV_+(y_i)$ 
	going from parallel affine subspaces $A^{0-}_{l_y}$ to $A^{0-}_{l_z}$. This vector is independent of $y_i$
	since $A^{0-}_{y_i}$ is parallel to $A^{0-}_{l_z} = A^{0-}_{z_i}$.  
	Then for the linear part $A_{\gamma_i}$ of the affine transformation $\gamma_i$, 
	it follows that 
	\[\llrrV{v'_i := A_{\gamma_i}(\vec{v})}_E \ra \infty\]
	by the two paragraphs ago. 
	Since $A^{0-}_{\gamma_i(y_i)} = \rho'(\gamma_i)(A^{0-}_{l_y})$ is 
	fixed under $\gamma_i$, and
	$A^{0-}_{\gamma_i(z_i)} = \rho'(\gamma_i)(A^{0-}_{l_z})$, 
	we have 
	\[K \cap  \rho'(\gamma_i)(A^{0-}_{l_z})= \emp\] for 
	sufficiently large $i$ for every compact subset $K$ of $\hat M$.
	This is a contradiction to the sentence containing \eqref{eqn:Hdist2-p2}.
\end{proof}

\section{Geometric convergences} \label{sec:geoconv-p2}

Now we begin the proof of Theorem \ref{thm:main-p2}. 
\red{For the main idea here, see the outline in the introduction. }


\begin{proposition} \label{prop:quasiiso-p2} Let $p \in \delta_\infty \hat M$. 
Then  $\mathcal{R}_p$ is quasi-isometric to $\hat M$, and 
			there is a $\dN$-cobounded quasi-isometric embedding 
$f:\hat M \ra \mathds{A}^n$ with image in $A^{0-}_y$ 
			for a generalized stable subspace $A^{0-}_y$  with $d_{A^{0-}_y}$
 for any point $y \in \mathcal{R}_p$. 
\end{proposition}
\begin{proof} 
	We can consider $\dev$ an isometry of $d_{\hat M}$ to $ \dN$. 
	We identify $\hat M$ with itself in $\mathds{A}^n$ by $\dev$. 
	So $\dev$ is the inclusion map for this proof. 
We obtain  $\pi_{\Uu \hat M} = \hat \Pi_{\mathds{A}^n} \circ s$, and
	that $\hat \Pi_{\mathds{A}^n}  \circ s: \Uu \hat M \ra \hat M$
	is a quasi-isometry by Lemma \ref{lem:piUUM}  of Part 1. 
	The image $\hat \Pi_{\mathds{A}^n}\circ s(\mathcal{R}_p) = 
	\pi_{\Uu\hat M}(\mathcal{R}_p)$ in $\hat M$ 
	is $C$-dense by Theorem \ref{thm:Ucdense-p2} for $C> 0$. 
	
%

Let $X_p$ denote $\pi_{\Uu \hat M} (\mathcal{R}_p)$. 
The map $\pi_{\Uu \hat M}: {\mathcal{R}}_p \ra X_p$ is a quasi-isometry since 
each fiber for each $x \in \hat M$ is a uniformly bounded set in $\Uu_x \hat M$ 
with metrics $d_{\Uu \hat M}$ and $d_{\mathds{A^n}}$.

By Proposition \ref{prop:Finalpart-p2}, 
$A^{0-}_y = A^{0-}_z$ for every $y, z \in \Uc \hat M$.  
We choose one $A^{0-}_y$. Then under 
$\Pi_{\mathds{A}^n}\circ s_\infty$, every $l_z$ goes into $A^{0-}$ for $z \in 
\mathcal{R}_p$ by Proposition \ref{prop:Finalpart-p2}. 
This fact shows that  
there is a map $\Pi_{\mathds{A}^n}\circ s_\infty : \mathcal{R}_p  \ra A^{0}_y$ is a quasi-isometric embedding
with respect to $d_{\Uu \hat M}$ and $d_{A^{0-}_y}$
by Theorem \ref{thm:neutral-p2}. 
Define a quasi-isometric embedding 
$f: X_p \ra \mathds{A}^{0-}_y$ 
by taking a possibly discontinuous section of $\Pi_{\Uu \hat M}$ and 
post-composing with the above map.

Now, $X_p$ with the restricted metric of $ \dN$ 
is quasi-isometric to $\hat M$ 
by Corollary 8.13 of \cite{DK2018} and Theorem \ref{thm:Ucdense-p2}.
There is the coarse inverse map $\hat M \ra X_p$ to 
the inclusion map $X_p \ra \hat M$. 
Composing $f$ with this map, 
we obtain a quasi-isometric embedding 
$\hat M \ra A^{0-}_y$. 
	\end{proof} 

\begin{corollary} 
	$\pi_1(N)$ quasi-isometrically embeds into a generalized stable affine subspace.
	\end{corollary} 
\begin{proof} 
	Since $\hat M$ is quasi-isometric with an orbit of $\pi_1(N)$, this follows. 
	\end{proof}

We use the theorem of Block-Wienberger combining their footnote comment. 

\begin{proposition}[Connect-the-dots in Block-Weinberger \cite{BW93}] \label{prop:BW-p2}
	Suppose that $f:Z \ra A$ is a coarse Lipschitz map from a finite-dimensional polyhedron $Z$ to 
	a metric subspace $A$ uniformly contractible in a metric space $B$, $A \subset B$,
	Let $Z'\subset Z$ be a subcomplex. Suppose that $f|Z'$ is continuous. 
Suppose that there is a cell-decomposition of $Z$ so that the image of each cell under $f$ in $Z$ has a bounded diameter $\leq C'$.	
Then $f$ is of a $C(C')$-bounded distance from a 
	continuous Lipschitz map $f':Z \ra B$
	where $f'|Z' = f|Z'$ and $C(C')$ is a constant depdning only on $C'$. 
\end{proposition} 
\begin{proof} 
	We simply extend $f$ over each cell using the uniform contractibility
	as indicated in \cite{BW93} starting from $0$-cells and then to $1$-cells and so on. 
Here, we do need the boundedness of the diameters of each cell to obtain the bounded distance.
\end{proof} 

Let $Z$ be a metric space. 
Let $H^j_{c}(X), j\in \bZ$ denote the direct limit 
\[\varinjlim H^j(X, X-K) \]
where $K$ is a compact subset of $X$ partially ordered by inclusion maps. (See Hatcher \cite{Hatcher}.)

Given two chain complexes $(C, d)$ and $(C', d')$, 
we define the {\em function complex} $\mathpzc{Hom}(C, C')$ by defining 
$\mathpzc{Hom}(C, C')_e$ to be the set of graded module homomorphisms of degree $e$. 
(See page 5 of \cite{Brown}.)

\begin{proof}[Proof of Theorem \ref{thm:main-p2}.] 
Suppose that $\rho|\Gamma_M$ is partially hyperbolic representation in the bundle sense
with index $k$ for $k < n/2$. By Proposition \ref{prop:Uc} of Part 1, 
$\rho$ is a $k$-Anosov representation
in the bundle sense according to 
the definition in Section 4.2 of \cite{BPS}. Proposition 4.5 of \cite{BPS} implies that 
$\rho$ is $k$-dominated. By Theorem 3.2 of \cite{BPS} (following from 
Theorem 1.4 of \cite{KLP18}), $\Gamma_M$ is 
word hyperbolic.

There exists an exhaustion of $\mathds{A}^n/\Gamma$ by compact FS submanifolds 
$M_i$ where $M=M_1 \subset M_2 \subset \cdots$. 
(See Scott-Tucker \cite{ST89} for constructions. Here, FS property easy to obtain.)
Also, we may choose a Riemannian metric so that each $M_i$ has convex boundary. 
Let $\hat M_i$ denote the cover of $M_i$ in $\mathds{A}^n$. 


(I) The first step is to homotopy $\dN$-parallelly the inclusion of $\hat M_i \ra \mathds{A}^n$ 
to a $\dN$-cobounded quasi-isometry into an affine subspace of dimension $n-k$ using Theorem \ref{thm:k-conn-p2}:  

Let us fix $i$ to start. 
Since $\Gamma_M$ is word-hyperbolic, 
we can apply all the results in the previous sections.
Proposition \ref{prop:quasiiso-p2}  gives us a $\dN$-cobounded quasi-isometric embedding 
$f: \hat M_i \ra \mathds{A}^n$ with the image in 
$N_C(\hat M_i) \cap L$ for an affine subspace $L$ of dimension $n-k$. 
Here, $N_C(\hat M_i)$ is a $C$-neighborhood of $\hat M_i$ in $\mathds{A}^n$
for some $C$ where $C$ is the constant obtained by Theorem \ref{thm:neutral-p2}
since we are modifying the map by neutralization. 
Since $M_i$ is compact, and $f$ is $\dN$-cobounded, 
each image of a cell under $f$ in $L$ has a uniformly bounded $\dN$-diameter.
Hence, each has a uniformly bounded $d_L$-diameter by the last part of 
Theorem \ref{thm:k-conn-p2}.

%
Since $N_C(\hat M_i)\cap L$ is uniformly contractible in $L$ with respect to $d_L$ by Theorem \ref{thm:k-conn-p2}, 
we modify $f$ to be a continuous quasi-isometric embedding to 
$N_{C'}(\hat M_i)\cap L$ by Proposition \ref{prop:BW-p2}
for another constant $C'$ depending only on $C$. 
(Of course here, we will not try to extend 
any already continuous parts defined on $\mathcal{R}_p$ and start from $0$-cells. )
We let $C$ to denote $C'$ from now on. 

Now, $f$ as a map to $\mathds{A}^n$ is $\dN$-cobounded  
since we modified the original $\dN$-cobounded map in a bounded manner in $L$ with respect to $d_L$ using Proposition \ref{prop:BW-p2} and 
$(L, d_L) \ra (\mathds{A}^n, \dN)$ is distance-nonincreasing. 
Using the inclusion map $\iota:   N_C(\hat M_i) \cap L \ra N_C(\hat M_i) $, we have
\begin{equation}
	\hat M_i \stackrel{f}{\ra}  N_C(\hat M_i) \cap L \stackrel{\iota}{\hookrightarrow} N_C(\hat M_i) \hookrightarrow  \hat M_{j(i)}
\end{equation}
for a sufficiently large $j(i)$. 
Denote the composition of the right two maps by $\iota$ also. 
Since $\iota \circ f$ is $\dN$-cobounded  in terms of $ \dN$,
%
there is a $\dN$-parallel homotopy $H$ between 
\[\iota \circ f: \hat M_i \ra \hat M_{j(i)} \hbox{ and } \iota_{ij(i)}: \hat M_i \ra \hat M_{j(i)}\]
by Lemma \ref{lem:parallel-p2} up to choosing $j(i)$ bigger  again
to accommodate the $\dN$-parallel homotopy. 
This is equivariant homotopy, and for each $t\in [0, 1]$. 
We may assume that the image of $H$ is in $\hat M_{j(i)}$ by taking sufficiently large $j(i)$. 

We note that $H, \iota, f, \iota_{ij(i)}$ are all continuous. 

(II) The next step is to apply the homotopy to the cohomology theory to compute the cohomological dimensions:

We recall three facts from \cite{Brown} 
\begin{itemize} 
\item 
$\mathpzc{Hom}(C, C')$ for chain complexes $(C, d)$ and $(C', d')$ is the set of graded module 
homomorphisms of degree $n$ from $C$ to $C'$. 
That is   \[\mathpzc{Hom}(C, C')_n:= \prod_{q\in \bZ}  \Hom(C_q, C'_{q+n})\] with  
the boundary operator $D_n$ is given by $D_n(f) := d' f - (-1)^n fd$.
(See Page 5 of \cite{Brown}.) 

\item 
$\mathpzc{Hom}_c(C_\ast(\hat M_i), \bZ) \subset \mathpzc{Hom}(C_\ast(\hat M_i), \bZ)$ 
consists of every chain map $f: C_\ast(\hat M_i) \ra \bZ$ such that for 
each $m \in M$, $f(\gamma m) =0$ for all but finitely many $\gamma \in \Gamma$. 
We define $H^j_c*(\hat M_i)$ to be the $j$-th cohomology of 
$\mathpzc{Hom}_c(C_\ast(\hat M_i), \bZ)$. 
(See Section 7 of Chapter 8 of \cite{Brown}.)  
\item Since $K(\Gamma, 1)$ is realized as a finite complex, $\Gamma$ is of type FL by Proposition 6.3 of Chapter 8 of  \cite{Brown}.  
By Proposition 6.7 of Chapter 8 of  \cite{Brown}, we have 
\begin{equation}\label{eqn:cd} 
\mathrm{cd}\Gamma = \max\{j | H^j(\Gamma; \bZ\Gamma)\ne 0\}.
\end{equation} 
\end{itemize} 
 
(II)(a) We have maps
\[H^j_c(\hat M_{j(i)}) \stackrel{\iota^*}{\ra} H^j_c(L\cap N_C(\hat M_i)) \stackrel{f^*}{\ra} H^j_c(\hat M_i)
\hbox{ for each } j \in \bZ.\] 
The composition equals $\iota_{ij(i)}^\ast$ by the $\dN$-parallel homotopy $H$. 
Since $L \cap N_c(\hat M_i))$ is a finite-dimensional smooth manifold, 
$\iota_{ij(i)}^*$ is zero for dimensions $> \dim L$. 
Now, we choose a subsequence of $M_i$ relabeled so that $M_{i+1} = M_{j(i)}$ for each $i$, $i=1, 2, \dots,$
where $j(i)$ is chosen as above. 
Therefore, we obtain
\begin{equation}\label{eqn:dimL-p2} 
\iota_{ij}^{\ast k}=0 \hbox{ for } k> \dim L, i < j. 
\end{equation}


(II)(b) Let $\tilde K(\Gamma, 1)$ denote the universal cover of $K(\Gamma, 1)$. According to the top of page 209 of \cite{Brown}, $H^*(\Gamma, \mathbb{Z}\Gamma)$ is the cohomology of $\mathpzc{Hom}_{\Gamma}(C_*(\tilde K(\Gamma, 1)), \mathbb{Z}\Gamma)$. $\mathbb{A}^n$ is a contractible free $\Gamma$-complex of $X$. Since $\mathbb{A}^n/\Gamma$ is homotopy equivalent to $K(\Gamma, 1)$, there exist maps
\[ f_1: \mathbb{A}^n \to \tilde K(\Gamma, 1) \quad \text{and} \quad f_2: \tilde K(\Gamma, 1) \to \mathbb{A}^n \]
such that $f_1 \circ f_2$ and $f_2 \circ f_1$ are homotopic to the identity maps equivariantly with respect to the $\Gamma$-actions. Hence, $C_*(\mathbb{A}^n)$ is chain homotopy equivalent to a finite free resolution $C_*(\tilde K(\Gamma, 1))$ of $\mathbb{Z}$ in a $\mathbb{Z}\Gamma$-equivariant manner with respect to $\mathbb{Z}\Gamma$-actions. Therefore, $H^*(\Gamma, \mathbb{Z}\Gamma)$ equals the domain of the isomorphism
\[ f_2^*: H^*(\mathpzc{Hom}_{\Gamma}(C_*(\mathbb{A}^n), \mathbb{Z}\Gamma)) \to H^*(\mathpzc{Hom}_{\Gamma}(C_*(\tilde K(\Gamma, 1)), \mathbb{Z}\Gamma)). \]

%

(II)(c) Since $\hat M_i$ exhausts $\mathds{A}^n$, 
$C_\ast(\mathds{A}^n)$ equals $\varinjlim C_\ast(\hat M_i)$ as $\bZ\Gamma$-modules. 
We have 
\begin{equation} \label{eqn:dlimit} 
\mathpzc{Hom}_{\Gamma}(C_*(\mathds{A}^n), \bZ\Gamma) = \varprojlim  \mathpzc{Hom}_{\Gamma}(C_\ast(\hat M_i), \bZ\Gamma).
\end{equation} 
Let $\tilde \iota_i: \hat M_i \ra \mathds{A}^n$ be the lift of the inclusion map $\iota_i: M_i \ra N$. The lift is unique since $\pi_1(M_i) \ra \pi_1(N)$ is surjective. 
Then we have
\begin{equation} \label{eqn:varp2-p2}
	\Lambda_i =\tilde \iota_i^\ast: H^l(\mathpzc{Hom}_{\Gamma}(C_*(\mathds{A}^n), \bZ\Gamma)) \ra 
	H^l( \mathpzc{Hom}_{\Gamma}(C_\ast(\hat M_i), \bZ\Gamma)) \hbox{ for all } l.
\end{equation} 
By Theorem 3.5.8 of \cite{Weibel}, there is a surjective homomorphism 
\begin{equation} \label{eqn:varp-p2}
\Lambda: H^l(\mathpzc{Hom}_{\Gamma}(C_*(\mathds{A}^n), \bZ\Gamma)) \ra 
\varprojlim H^l( \mathpzc{Hom}_{\Gamma}(C_\ast(\hat M_i), \bZ\Gamma)) \hbox{ for all } l.
\end{equation} 
where $\Lambda$ is the inverse limit of $\Lambda_i$.  
We may assume that the image of $f_2$ is in $\hat M_i$ for all $i$. 
Let $f_2^i: \tilde K(\Gamma, 1) \ra \hat M_i$  denote the restriction of the range space. 
Then  we have $\tilde \iota_i \circ f_2^i = f_2$, and 
$f_2^* = f_2^{i*} \circ \Lambda_i$ is an isomorphism. 
This means that each $\Lambda_i$ is injective. Hence, we deduced that $\Lambda$ is an isomorphism. 

(II)(d) By Lemma 7.4 of Chapter 8 of \cite{Brown}, there is a natural isomorphism
\red{
\[\mathpzc{Hom}_{\Gamma}(C_\ast(\hat M_i), \bZ\Gamma) \cong \mathpzc{Hom}_c(C_\ast(\hat M_i), \bZ).\] 
}
Since the cohomology of 
$\mathpzc{Hom}_c(C_\ast(\hat M_i), \bZ)$ is 
$H^*_c(\hat M_i)$, the right side of \eqref{eqn:varp-p2} is zero for $l > \dim L$
 by \eqref{eqn:dimL-p2}.  
By the conclusion of (II)(b), we obtain that 
\[  H^l(\Gamma, \bZ\Gamma) = 0 \hbox{ for } l > \dim L.\]
By \eqref{eqn:cd}, we obtain
$\mathrm{cd}(\Gamma) \leq \dim L$. 
%
\end{proof}

\begin{proof}[Proof of Corollary \ref{cor:cpt-p2}]
By Theorem \ref{thm:main2} of Part 1, $\rho$ is partially hyperbolic and $k < n/2$. There is a constant $C$ of the $\dN$-parallel homotopy obtained in Theorem \ref{thm:neutral-p2}. For each $z \in \partial_\infty \pi_1(N)$, $\mathcal{R}_z$ embeds quasi-isometrically into a generalized stable affine subspace $A_z$ by lifting points to $\Uc \tilde{M}$ and then applying $\tilde{s}_\infty$, moving only a distance bounded above by $C$ by Proposition \ref{prop:quasiiso-p2}. Let $K$ be a compact fundamental domain of $\Uc \tilde{M}$. Since $\mathcal{R}_z$ is $C'$-dense for some $C' > 0$, a point of $\mathcal{R}_z$ is in a $C'$-neighborhood $K'$ of $K$. The affine subspace $A_z$ is determined by $\tilde{s}_\infty(K' \cap \mathcal{R}_z)$ for each $z \in \partial_\infty \pi_1(N)$. Since $\tilde{s}_\infty(K')$ is compact, and each $A_z$ for $z \in \partial_\infty \pi_1(N)$ contains a point of $\tilde{s}_\infty(K')$, the set of $A_z$ for $z \in \partial_\infty \pi_1(M)$ is compact. Since all $A_z$ for $z \in \partial_\infty \pi_1(M)$ are considered, the invariance under the affine group follows.

\end{proof} 

\begin{remark}
	Provided $M$ is closed, 
we may have assumed in the above proof that  
the linear part homomorphism $\rho:\Gamma \ra \GL(n, \bR)$ 
is injective by Corollary 1.1 of Bucher-Connel-Lafont \cite{BCL}
since $\Gamma$ is hyperbolic and hence the simplicial volume is nonzero
by Gromov \cite{Gromov82}. 
\end{remark}

\bibliographystyle{plain}
\bibliography{AA-PI}   

\providecommand{\noopsort}[1]{}
\begin{thebibliography}{10}

\bibitem{AMS97}
H.~Abels, G.~A. Margulis, and G.~A. Soifer.
\newblock Properly discontinuous groups of affine transformations with
  orthogonal linear part.
\newblock {\em C. R. Acad. Sci. Paris S\'er. I Math.}, 324(3):253--258, 1997.

\bibitem{AMS02}
H.~Abels, G.~A. Margulis, and G.~A. Soifer.
\newblock On the {Z}ariski closure of the linear part of a properly
  discontinuous group of affine transformations.
\newblock {\em J. Differential Geom.}, 60(2):315--344, 2002.

\bibitem{AMS05}
H.~Abels, G.~A. Margulis, and G.~A. Soifer.
\newblock The {A}uslander conjecture for groups leaving a form of signature
  {$(n-2,2)$} invariant.
\newblock {\em Israel J. Math.}, 148:11--21, 2005.
\newblock Probability in mathematics.

\bibitem{AMS11}
H.~Abels, G.~A. Margulis, and G.~A. Soifer.
\newblock The linear part of an affine group acting properly discontinuously
  and leaving a quadratic form invariant.
\newblock {\em Geom. Dedicata}, 153:1--46, 2011.

\bibitem{Abels01}
Herbert Abels.
\newblock Properly discontinuous groups of affine transformations: a survey.
\newblock {\em Geom. Dedicata}, 87(1-3):309--333, 2001.

\bibitem{Benoist97}
Y.~Benoist.
\newblock Propri\'et\'es asymptotiques des groupes lin\'eaires.
\newblock {\em Geom. Funct. Anal.}, 7(1):1--47, 1997.

\bibitem{Benoist98}
Y.~Benoist.
\newblock Propri\'{e}t\'{e}s asymptotiques des groupes lin\'{e}aires. {II}.
\newblock In {\em Analysis on homogeneous spaces and representation theory of
  {L}ie groups, {O}kayama--{K}yoto (1997)}, volume~26 of {\em Adv. Stud. Pure
  Math.}, pages 33--48. Math. Soc. Japan, Tokyo, 2000.

\bibitem{Benzecri}
J.-P. Benz\'ecri.
\newblock Vari\'et\'es localement affines.
\newblock {\em S\'eminare Ehresmann. Topologie et g\'eom\'etrie
  diff\'erentielle}, 2:1--35, 1958-1960.

\bibitem{BM91}
M.~Bestvina and G.~Mess.
\newblock The boundary of negatively curved groups.
\newblock {\em J. Amer. Math. Soc.}, 4(3):469--481, 1991.

\bibitem{BW93}
J.~Block and S.~Weinberger.
\newblock Large scale homology theories and geometry.
\newblock In {\em Geometric topology ({A}thens, {GA}, 1993)}, volume~2 of {\em
  AMS/IP Stud. Adv. Math.}, pages 522--569. Amer. Math. Soc., Providence, RI,
  1997.

\bibitem{BPS}
J.~Bochi, R.~Potrie, and A.~Sambarino.
\newblock {A}nosov representations and dominated splittings.
\newblock {\em J. Eur. Math. Soc. (JEMS)}, 21(11):3343--3414, 2019.

\bibitem{BLM}
C.~Bonatti, L.~J. D\'{\i}az, and M.~Viana.
\newblock {\em Dynamics beyond uniform hyperbolicity}, volume 102 of {\em
  Encyclopaedia of Mathematical Sciences}.
\newblock Springer-Verlag, Berlin, 2005.
\newblock A global geometric and probabilistic perspective, Mathematical
  Physics, III.

\bibitem{BS2021}
E.~Breuillard and C.~Sert.
\newblock The joint spectrum.
\newblock {\em J. Lond. Math. Soc. (2)}, 103(3):943--990, 2021.

\bibitem{Brown}
K.~S. Brown.
\newblock {\em Cohomology of groups}, volume~87 of {\em Graduate Texts in
  Mathematics}.
\newblock Springer-Verlag, New York, 1994.
\newblock Corrected reprint of the 1982 original.

\bibitem{BCL}
M.~Bucher, C.~Connell, and J.-F. Lafont.
\newblock Vanishing simplicial volume for certain affine manifolds.
\newblock {\em Proc. Amer. Math. Soc.}, 146(3):1287--1294, 2018.

\bibitem{CT2020}
R.~D. Canary and K.~Tsouvalas.
\newblock Topological restrictions on {A}nosov representations.
\newblock {\em Journal of Topology}, 13:1497--1520, 2020.

\bibitem{Champetier94}
C.~Champetier.
\newblock Petite simplification dans les groupes hyperboliques.
\newblock {\em Ann. Fac. Sci. Toulouse Math. (6)}, 3(2):161--221, 1994.

\bibitem{CD15}
V.~Charette and T.~A. Drumm.
\newblock Complete {L}orentzian 3-manifolds.
\newblock In {\em Geometry, groups and dynamics}, volume 639 of {\em Contemp.
  Math.}, pages 43--72. Amer. Math. Soc., Providence, RI, 2015.

\bibitem{CDP90}
M.~Coornaert, T.~Delzant, and A.~Papadopoulos.
\newblock {\em G\'{e}om\'{e}trie et th\'{e}orie des groupes}, volume 1441 of
  {\em Lecture Notes in Mathematics}.
\newblock Springer-Verlag, Berlin, 1990.
\newblock Les groupes hyperboliques de Gromov. [Gromov hyperbolic groups], With
  an English summary.

\bibitem{CP}
S.~Crovisier and R.~Potrie.
\newblock Introduction to partially hyperbolic dynamics.
\newblock School on Dynamical Systems, ICTP, Trieste (July 2015),
  \url{https://www.imo.universite-paris-saclay.fr/~crovisie/00-CP-Trieste-Version1.pdf},
  2015.

\bibitem{DDGS}
J.~Danciger, T.~Drumm, W.~Goldman, and I.~Smilga.
\newblock Proper actions of discrete groups of affine transformations.
\newblock In {\em Dynamics, geometry, number theory---the impact of {M}argulis
  on modern mathematics}, pages 95--168. Univ. Chicago Press, Chicago, IL,
  2022.

\bibitem{DGK2020}
J.~Danciger, F.~Gu\'{e}ritaud, and F.~Kassel.
\newblock Proper affine actions for right-angled {C}oxeter groups.
\newblock {\em Duke Math. J.}, 169(12):2231--2280, 2020.

\bibitem{DZ2019}
J.~Danciger and T.~Zhang.
\newblock Affine actions with {H}itchin linear part.
\newblock {\em Geom. Funct. Anal.}, 29(5):1369--1439, 2019.

\bibitem{DK2018}
C.~Dru\c{t}u and M.~Kapovich.
\newblock {\em Geometric group theory}, volume~63 of {\em American Mathematical
  Society Colloquium Publications}.
\newblock American Mathematical Society, Providence, RI, 2018.
\newblock With an appendix by Bogdan Nica.

\bibitem{FG83}
D.~Fried and W.~Goldman.
\newblock Three-dimensional affine crystallographic groups.
\newblock {\em Adv. in Math.}, 47(1):1--49, 1983.

\bibitem{Ghosh17}
S.~Ghosh.
\newblock Anosov structures on {M}argulis spacetimes.
\newblock {\em Groups Geom. Dyn.}, 11(2):739--775, 2017.

\bibitem{ghosh2018avatars}
S.~Ghosh.
\newblock Avatars of {M}argulis invariants and proper actions, 2018.
\newblock arXiv, 1812.03777, to appear in Annales de l'Institut Fourier.

\bibitem{Ghosh18}
S.~Ghosh.
\newblock The pressure metric on the {M}argulis multiverse.
\newblock {\em Geom. Dedicata}, 193:1--30, 2018.

\bibitem{Ghosh23}
S.~Ghosh.
\newblock Margulis multiverse: infinitesimal rigidity, pressure form and
  convexity.
\newblock {\em Trans. Amer. Math. Soc.}, 376(6):4239--4272, 2023.

\bibitem{GhoshTreib21}
S.~Ghosh and N.~Treib.
\newblock Affine anosov representations and proper actions.
\newblock {\em IRMN}, 2023:14334--14367, 2023.

\bibitem{GdH90}
\'{E}. Ghys and P.~de~la Harpe, editors.
\newblock {\em Sur les groupes hyperboliques d'apr\`es {M}ikhael {G}romov},
  volume~83 of {\em Progress in Mathematics}.
\newblock Birkh\"{a}user Boston, Inc., Boston, MA, 1990.
\newblock Papers from the Swiss Seminar on Hyperbolic Groups held in Bern,
  1988.

\bibitem{G88}
W.~Goldman.
\newblock Geometric structures on manifolds and varieties of representations.
\newblock In {\em Geometry of group representations ({B}oulder, {CO}, 1987)},
  volume~74 of {\em Contemp. Math.}, pages 169--198. Amer. Math. Soc.,
  Providence, RI, 1988.

\bibitem{GK84}
W.~Goldman and Y.~Kamishima.
\newblock The fundamental group of a compact flat {L}orentz space form is
  virtually polycyclic.
\newblock {\em J. Differential Geom.}, 19(1):233--240, 1984.

\bibitem{GL12}
W.~Goldman and F.~Labourie.
\newblock Geodesics in {M}argulis spacetimes.
\newblock {\em Ergodic Theory Dynam. Systems}, 32(2):643--651, 2012.

\bibitem{GLM09}
W.~Goldman, F.~Labourie, and G.~Margulis.
\newblock Proper affine actions and geodesic flows of hyperbolic surfaces.
\newblock {\em Ann. of Math. (2)}, 170(3):1051--1083, 2009.

\bibitem{Gromov82}
M.~Gromov.
\newblock Volume and bounded cohomology.
\newblock {\em Inst. Hautes \'{E}tudes Sci. Publ. Math.}, (56):5--99 (1983),
  1982.

\bibitem{Gromov83}
M.~Gromov.
\newblock Filling {R}iemannian manifolds.
\newblock {\em J. Differential Geom.}, 18(1):1--147, 1983.

\bibitem{Gromov87}
M.~Gromov.
\newblock Hyperbolic groups.
\newblock In {\em Essays in group theory}, volume~8 of {\em Math. Sci. Res.
  Inst. Publ.}, pages 75--263. Springer, New York, 1987.

\bibitem{Gromov93}
M.~Gromov.
\newblock Asymptotic invariants of infinite groups.
\newblock In {\em Geometric group theory, {V}ol. 2 ({S}ussex, 1991)}, volume
  182 of {\em London Math. Soc. Lecture Note Ser.}, pages 1--295. Cambridge
  Univ. Press, Cambridge, 1993.

\bibitem{GGKW17ii}
F.~Gu\'{e}ritaud, O.~Guichard, F.~Kassel, and A.~Wienhard.
\newblock Anosov representations and proper actions.
\newblock {\em Geom. Topol.}, 21(1):485--584, 2017.

\bibitem{GW12}
O.~Guichard and A.~Wienhard.
\newblock {A}nosov representations: domains of discontinuity and applications.
\newblock {\em Invent. Math.}, 190(2):357--438, 2012.

\bibitem{GJT}
Y.~Guivarc'h, L.~Ji, and J.~C. Taylor.
\newblock {\em Compactifications of symmetric spaces}, volume 156 of {\em
  Progress in Mathematics}.
\newblock Birkh\"{a}user Boston, Inc., Boston, MA, 1998.

\bibitem{Hatcher}
A.~Hatcher.
\newblock {\em Algebraic topology}.
\newblock Cambridge University Press, Cambridge, 2002.

\bibitem{KL18}
M.~Kapovich and B.~Leeb.
\newblock Discrete isometry groups of symmetric spaces.
\newblock In {\em Handbook of group actions. {V}ol. {IV}}, volume~41 of {\em
  Adv. Lect. Math. (ALM)}, pages 191--290. Int. Press, Somerville, MA, 2018.

\bibitem{kapovich2023relativizing}
M.~Kapovich and B.~Leeb.
\newblock Relativizing characterizations of anosov subgroups, {I}, 2023.

\bibitem{KLP17}
M.~Kapovich, B.~Leeb, and J.~Porti.
\newblock {A}nosov subgroups: dynamical and geometric characterizations.
\newblock {\em Eur. J. Math.}, 3(4):808--898, 2017.

\bibitem{KLP18iii}
M.~Kapovich, B.~Leeb, and J.~Porti.
\newblock Dynamics on flag manifolds: domains of proper discontinuity and
  cocompactness.
\newblock {\em Geom. Topol.}, 22(1):157--234, 2018.

\bibitem{KLP18}
M.~Kapovich, B.~Leeb, and J.~Porti.
\newblock A {M}orse lemma for quasigeodesics in symmetric spaces and
  {E}uclidean buildings.
\newblock {\em Geom. Topol.}, 22(7):3827--3923, 2018.

\bibitem{KP22}
F.~Kassel and R.~Potrie.
\newblock Eigenvalue gaps for hyperbolic groups and semigroups.
\newblock {\em J. Mod. Dyn.}, 18:161--208, 2022.

\bibitem{KH95}
A.~Katok and B.~Hasselblatt.
\newblock {\em Introduction to the modern theory of dynamical systems},
  volume~54 of {\em Encyclopedia of Mathematics and its Applications}.
\newblock Cambridge University Press, Cambridge, 1995.
\newblock With a supplementary chapter by Katok and Leonardo Mendoza.

\bibitem{Klingler2017}
Bruno Klingler.
\newblock Chern's conjecture for special affine manifolds.
\newblock {\em Ann. of Math. (2)}, 186(1):69--95, 2017.

\bibitem{KS75}
B.~Kostant and D.~Sullivan.
\newblock The {E}uler characteristic of an affine space form is zero.
\newblock {\em Bull. Amer. Math. Soc.}, 81(5):937--938, 1975.

\bibitem{Matheus90}
F.~Math\'{e}us.
\newblock Flot g\'{e}od\'{e}sique et groupes hyperboliques d'apr\`es {M}.
  {G}romov.
\newblock In {\em S\'{e}minaire de {T}h\'{e}orie {S}pectrale et
  {G}\'{e}om\'{e}trie, {N}o. 9, {A}nn\'{e}e 1990--1991}, volume~9 of {\em
  S\'{e}min. Th\'{e}or. Spectr. G\'{e}om.}, pages 67--87. Univ. Grenoble I,
  Saint-Martin-d'H\`eres, 1991.

\bibitem{Milnor}
J.~Milnor.
\newblock A note on curvature and fundamental group.
\newblock {\em J. Differential Geometry}, 2:1--7, 1968.

\bibitem{Mineyev}
I.~Mineyev.
\newblock Flows and joins of metric spaces.
\newblock {\em Geom. Topol.}, 9:403--482, 2005.

\bibitem{ST89}
P.~Scott and T.~Tucker.
\newblock Some examples of exotic noncompact {$3$}-manifolds.
\newblock {\em Quart. J. Math. Oxford Ser. (2)}, 40(160):481--499, 1989.

\bibitem{Smilga14}
I.~Smilga.
\newblock Fundamental domains for properly discontinuous affine groups.
\newblock {\em Geom. Dedicata}, 171:203--229, 2014.

\bibitem{Smilga16}
I.~Smilga.
\newblock Proper affine actions on semisimple {L}ie algebras.
\newblock {\em Ann. Inst. Fourier (Grenoble)}, 66(2):785--831, 2016.

\bibitem{Smilga18}
I.~Smilga.
\newblock Proper affine actions in non-swinging representations.
\newblock {\em Groups Geom. Dyn.}, 12(2):449--528, 2018.

\bibitem{Smilga22}
I.~Smilga.
\newblock Proper affine actions: a sufficient criterion.
\newblock {\em Math. Ann.}, 382(1-2):513--605, 2022.

\bibitem{Smilga22ii}
I.~Smilga.
\newblock Representations having vectors fixed by a {L}evi subgroup associated
  to a real form.
\newblock {\em J. Algebra}, 597:75--115, 2022.

\bibitem{Stallings65}
J.~Stallings.
\newblock Embedding homotopy types into manifolds, 1965.
\newblock note, Princeton University.

\bibitem{Sullivan76}
D.~Sullivan.
\newblock A generalization of {M}ilnor's inequality concerning affine
  foliations and affine manifolds.
\newblock {\em Comment. Math. Helv.}, 51(2):183--189, 1976.

\bibitem{Svarc}
A.. {\noopsort{Svarc}}\v{S}varc.
\newblock A volume invariant of coverings.
\newblock {\em Dokl. Akad. Nauk SSSR (N.S.)}, 105:32--34, 1955.

\bibitem{Thurston97}
W.~Thurston.
\newblock {\em Three-dimensional geometry and topology. {V}ol. 1}, volume~35 of
  {\em Princeton Mathematical Series}.
\newblock Princeton University Press, Princeton, NJ, 1997.
\newblock Edited by Silvio Levy.

\bibitem{Venema80}
G.~Venema.
\newblock A manifold that does not contain a compact core.
\newblock {\em Topology Appl.}, 90(1-3):197--210, 1998.

\bibitem{Vinogradova}
I.~Vinogradova.
\newblock Semicontinuous function.
\newblock Encyclopedia of Mathematics,
  \url{http://encyclopediaofmath.org/index.php?title=Semicontinuous_function\&oldid=18403},
  2011.

\bibitem{Weibel}
C.~Weibel.
\newblock {\em An introduction to homological algebra}, volume~38 of {\em
  Cambridge Studies in Advanced Mathematics}.
\newblock Cambridge University Press, Cambridge, 1994.

\bibitem{Zhu21}
F.~Zhu.
\newblock Relatively dominated representations.
\newblock {\em Ann. Inst. Fourier (Grenoble)}, 71(5):2169--2235, 2021.

\bibitem{Zhu23}
F.~Zhu.
\newblock Relatively dominated representations from eigenvalue gaps and limit
  maps.
\newblock {\em Geom. Dedicata}, 217(2):Paper No. 39, 25, 2023.

\bibitem{zhu2022p}
F.~Zhu and A.~Zimmer.
\newblock Relatively anosov representations via flows {I}: theory, 2022.

\end{thebibliography}

\end{document}